\@date \else {\vskip0ex \centering\footnotesize\@date\par\vskip1ex}\fi
\else \@footnotetext{\@setdate}\fi}
\theoremstyle{theorem} 
\newtheorem{theorem}{Theorem}[section]
\newtheorem*{theorem*}{Theorem}
\newtheorem{conjecture}[theorem]{Conjecture}
\newtheorem{lemma}[theorem]{Lemma}
\newtheorem{proposition}[theorem]{Proposition}
\newtheorem{corollary}[theorem]{Corollary}
\theoremstyle{definition} 
\theoremstyle{definition}
\theoremstyle{definition}
\newtheorem{remark}[theorem]{Remark}
\theoremstyle{definition}
\newtheorem{example}[theorem]{Example}
\theoremstyle{definition}
\newtheorem{definition}[theorem]{Definition}
\newtheorem*{thm1}{Theorem \ref{thm:sametriangle}}
\newtheorem*{thm2}{Theorem \ref{billiardsiet}}
\newtheorem*{thm3}{Theorem \ref{thm:ayiet}}
\newtheorem*{thm4}{Theorem \ref{thm:tree}}
\newcommand{\R}{\mathbb{R}}
\newcommand{\Z}{\mathbb{Z}}
\newcommand{\Q}{\mathbb{Q}}
\newcommand{\ve}{\varepsilon}
\newif\ifdraft\drafttrue
\begin{document}

\title[Tiling Billards on Triangle Tilings, and Interval Exchange Transformations]{Tiling Billards on Triangle Tilings, and\\ Interval Exchange Transformations}

\author[]{Paul Baird-Smith, Diana Davis, Elijah Fromm, Sumun Iyer}

\date{\today}

\begin{abstract}
We consider the dynamics of light rays in triangle tilings where triangles are transparent and adjacent triangles have equal but opposite indices of refraction. We find that the behavior of a trajectory on a triangle tiling is described by an orientation-reversing three-interval exchange transformation on the circle, and that the behavior of all the trajectories on a given triangle tiling is described by a polygon exchange transformation. We show that, for a particular choice of triangle tiling, certain trajectories approach the Rauzy fractal, under rescaling.
\end{abstract}

\maketitle

\section{Introduction}

A \emph{triangle tiling} is a planar tiling by congruent copies of a triangle so that the tiling is a grid of parallelograms with parallel diagonals. Equivalently, it is the image of the equilateral triangle tiling under an affine transformation. We describe the behavior of \emph{tiling billiards} trajectories on triangle tilings, using the refraction rule that when a trajectory ray hits an edge of the tiling, it is reflected across that edge (Figure \ref{refractionrule}(a)). As usual, if the ray hits a vertex, the subsequent trajectory is undefined. 

We prove the following powerful result about trajectories on triangle tilings:

\begin{thm1}
If a trajectory passes through the same triangle twice, the trajectory is in the same position each time.
\end{thm1}

Thus every non-escaping trajectory is periodic, and a periodic trajectory forms a simple, closed curve.

\begin{figure}[!h]
\begin{center}
\includegraphics[width=0.9\textwidth]{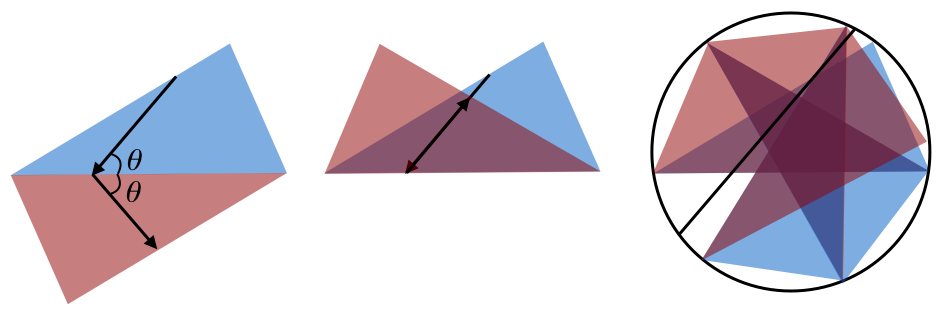}
\caption{(a) A tiling billiards trajectory refracts across an edge of a triangle tiling. (b) When folded across an edge, the pieces of trajectory lie on the same line. (c) In the folded position, all of the triangles are inscribed in the same circle, and the folded trajectory lies on a chord. \label{refractionrule}}
\end{center}
\end{figure}

As a crucial tool, we \emph{fold} trajectories along every edge of the tiling, which maps pieces of the trajectory onto the same line (Figure \ref{refractionrule}(b)) and inscribes every triangle into a single circle (Figure \ref{refractionrule}(c)). This allows us to turn the two-dimensional tiling billiards system into a one-dimensional system, where the circle and chord are fixed and we keep track of how one vertex of the triangle moves around the circle. Its movement can be described by an orientation-reversing 3-interval exchange transformation (IET):

\begin{thm2}
The movement of a trajectory whose chord subtends angle $\tau$ in a triangle tiling with angles $\alpha,\beta,\gamma$ is described by the orbit of a point under the \emph{tiling billiards IET}, which is a circle exchange transformation: the unit circle is cut into intervals of length $2\alpha, 2\beta$, $2\gamma$, each interval is flipped in place, and the circle is rotated by $\tau$.
\end{thm2}

This equivalence allows us to see the orbit of a point in an IET as a path in the plane. Additionally, the square of the orientation-reversing, 3-interval tiling billiards circle exchange transformation is an orientation-preserving circle exchange transformation with at most 6 intervals. For a particular choice of angles, it is the \emph{Arnoux-Yoccoz IET} (Definition~\ref{ayiet}):

\begin{thm3}
Let $a\approx 0.54$ be the real solution to $x+x^2+x^3=1$. For the \emph{Arnoux-Yoccoz triangle tiling} with angles $\alpha = \pi(1-a)/{2}$, $\beta = \pi(1-a^2)/{2}$, $\gamma = \pi(1-a^3)/{2}$, the square of the tiling billiards IET for the trajectory through the circumcenter is the Arnoux-Yoccoz IET scaled onto the interval $[0, 2\pi]$.
\end{thm3}

Trajectories on the Arnoux-Yoccoz triangle tiling approach the \emph{Rauzy fractal}. The closer the trajectory chord is to the circumcenter of the inscribing circle, the larger and more fractal-like the trajectories are. See Appendix \ref{rauzyfrac} for the first 12 such trajectories.

Finally, we have observed no periodic trajectories on triangle tilings that enclose a triangle; they only seem to enclose vertices and edges of the tiling, which thus form a \emph{tree}. We conjecture that this holds for all periodic trajectories on triangle tilings, and prove it in the obtuse case:

\begin{thm4}
A periodic trajectory on an obtuse triangle tiling encloses a path.
\end{thm4}

\subsection{Background on tiling billiards}

Physicists have recently discovered materials with a negative index of refraction, called \emph{metamaterials} \cite{shelby,smith}. We imagine obtaining a standard material (such as plastic or glass) with a positive index of refraction, and a metamaterial with the opposite index of refraction. For two-colorable tilings, such as the blue and red triangle tilings pictured in this paper, we can make the blue triangles out of the standard material, and the red triangles out of the metamaterial. Because the indices of refraction are equal and opposite, a beam of light passing through this tiling would take the path of a tiling billiards trajectory, with angles preserved across the edge as in Figure \ref{refractionrule}(a).

Sergei Tabachnikov originally suggested the system of tiling billiards, motivated by the unpublished work \cite{MF} of the physicists Mascarenhas and Fluegel, who investigated tiling billiards on the simplest tilings.
The second author, DiPietro, Rustad and St Laurent \cite{icerm} explored tiling billiards on three classes of tilings, all formed by lines: a division of the plane by finitely many lines, triangle tilings as defined above, and the \emph{trihexagonal} tiling where an equilateral triangle and a regular hexagon meet along each edge. That paper made many observations and conjectures about the behavior of trajectories on triangle tilings, which motivated our study in the present paper; our Corollaries \ref{nospiralnodense} and \ref{cor:4n2obtuse} resolve conjectures from \cite{icerm}.

The fact that a given trajectory crosses a given (triangular) tile at most once is in contrast to the behavior of trajectories on other tilings. The second author and Hooper studied the trihexagonal tiling, and showed that for that tiling, \emph{dense} behavior is general:

\begin{theorem*} \cite[Theorem 1.1]{trihex}
For almost every initial point and direction, a trajectory with this initial position and direction is dense in all of the plane except for a periodic family of triangular open sets.  
\end{theorem*}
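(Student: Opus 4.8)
The plan is to follow the one-dimensional reduction philosophy of the folding construction, but now exploiting the three-directional structure of the trihexagonal tiling. Because every edge of the tiling lies in one of three directions (pairwise at $60^\circ$), and refraction reflects the trajectory across the edge it meets, the direction of a trajectory is confined to the finite orbit of its initial direction under the group generated by reflections in those three directions. First I would fix a generic initial direction $\theta$ and record the combinatorial itinerary of a trajectory as the bi-infinite sequence of cells (triangles and hexagons) it crosses, together with a transverse coordinate measuring where it meets each edge. Collapsing the finitely many realized directions turns the planar refraction dynamics into a one-dimensional return map on a transversal --- concretely an interval-exchange-type map whose combinatorics are dictated by the local geometry at each triangle--hexagon junction. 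Here the contrast with the triangle-tiling case is essential: a trajectory on the trihexagonal tiling \emph{may} recross a given cell, which is exactly what makes dense behavior possible.

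The second step is to set up a renormalization, or self-similarity, for this return map. The trihexagonal tiling is invariant under a scaling sending it to a coarser copy of itself, and I would show that a trajectory, after passing through a controlled block of cells, traces out --- up to this rescaling --- a trajectory of the same combinatorial type on the coarser tiling. This yields a renormalization operator $\mathcal{R}$ on the parameter space of (direction, transverse-position) data, so that the long-term behavior of a trajectory is governed by the forward $\mathcal{R}$-orbit of its parameter. The periodic family of excluded triangular open sets should emerge at this stage as the cores of the regions around which a trajectory winds at every scale: they are precisely the cells that are enclosed rather than crossed at each level of renormalization, and their periodicity reflects the translational symmetry of the tiling.

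Third, I would prove the measure-theoretic dichotomy. The non-dense initial conditions are exactly those whose renormalization orbit is eventually trapped in a periodic or degenerate configuration --- the analogue of rational or eventually-periodic parameters for an interval exchange --- and I would argue this is a measure-zero condition, either by showing that $\mathcal{R}$ expands transverse to the trapped locus or by proving unique ergodicity of the renormalized system for almost every parameter via a Rauzy-induction or Veech-type argument that contracts the relevant cocycle. For the full-measure complement, unique ergodicity forces every such trajectory to equidistribute with respect to the natural invariant measure, whose support is all cells except the excluded triangular family; equidistribution with full support is exactly density in the plane minus that periodic family of open triangles.

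The main obstacle is the renormalization step: making the self-similarity precise enough to define $\mathcal{R}$ on the correct parameter space, proving that almost every $\mathcal{R}$-orbit escapes to aperiodic, minimal, uniquely ergodic behavior, and simultaneously pinning down the exceptional holes. In particular, establishing unique ergodicity rather than mere minimality is what upgrades ``visits a dense set of cells'' to genuine density and rules out positive-measure families of trapped trajectories; controlling the cocycle that tracks transverse position through the renormalization --- and showing that its Lyapunov behavior separates the dense from the non-dense parameters --- is the technical heart of the argument.
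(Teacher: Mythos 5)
A point of orientation first: the present paper does not prove this statement at all --- it is quoted verbatim from Davis--Hooper \cite{trihex} purely as background, to contrast with the triangle-tiling results proved here. So the comparison has to be with the method of \cite{trihex}, which fixes a direction (using, as you do, that the reflection group generated by the three edge directions has finite orbits on the circle of directions), folds the fixed-direction dynamics onto the straight-line flow on a translation surface, and recovers the plane as a $\Z^2$-periodic infinite cover of that surface; the almost-everywhere density statement then comes from ergodic theory for such infinite periodic surfaces together with the affine symmetries of the compact quotient. Your first paragraph matches the first step of that strategy. After that, the proposal diverges, and it contains two genuine gaps.

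The fatal gap is in your third step: unique ergodicity of the folded (quotient) system does not yield density in the plane. Density in the plane is a statement about a $\Z^2$-extension --- the planar trajectory is reconstructed from the quotient orbit via a $\Z^2$-valued displacement cocycle --- and good behavior of the base does not transfer to the extension. The Ehrenfest wind-tree model is the standard warning: there the compact quotient flow is uniquely ergodic in almost every direction (Kerckhoff--Masur--Smillie), yet Fraczek and Ulcigrai proved that the planar flow is non-ergodic in almost every direction. Controlling the displacement cocycle (recurrence, essential values) is precisely the technical heart of the cited paper's argument, and your outline never engages it. The second gap is the renormalization operator $\mathcal{R}$: it is premised on the trihexagonal tiling being ``invariant under a scaling sending it to a coarser copy of itself,'' which is false --- the tiling has cells of fixed size, so no homothety maps it to itself; in this subject renormalization comes from affine symmetries (Veech-group elements) of the folded translation surface, not from rescaling the tiling. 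Relatedly, your identification of the excluded triangular open sets as cells ``enclosed rather than crossed'' at each renormalization level is not what the theorem asserts: the excluded sets are in general not tiles of the tiling (their size varies with the trajectory), and they are cut out by a conserved quantity of the refraction dynamics --- the trihexagonal analogue of the invariant of Lemma \ref{circdist} in this paper --- rather than by renormalization combinatorics.
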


Our system models a (two-colorable) triangle tiling tiled by materials with equal and opposite indices of refraction, whose ratio is the \emph{refraction coefficient} $-1$. Glendinning \cite{glendinning} studies billiards on the (two-colorable) square tiling, where the two colors of tiles have different \emph{positive} indices of refraction, where the refraction coefficient is greater than 1. It turns out that, as in our case, the billiard system under this rule can be described by an IET:

\begin{theorem*} \cite[\S 1]{glendinning}
On the square tiling, if the refraction coefficient is greater than $\sqrt 2$, the dynamics of trajectories can be described by an interval exchange transformation.
\end{theorem*}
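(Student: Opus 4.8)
\section*{Proof proposal}

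The plan is to pass from the two-dimensional refracting flow to a one-dimensional first-return map by working in \emph{wave-vector coordinates}: I would show that the hypothesis $\kappa > \sqrt{2}$ forces the set of directions visited by any trajectory to be finite (indeed minimal), and then invoke the standard fact that a flow in finitely many directions on a periodic medium becomes, after unfolding, a linear flow on a compact translation surface whose cross-section map is an interval exchange. First I would normalize the two indices to $n_1 > n_2$ with refraction coefficient $\kappa = n_1/n_2$ and record Snell's law in momentum form: describe a ray by its wave vector $(p,q)$, whose length equals the local index, so that $p^2 + q^2 = n_1^2$ in the dense squares and $p^2 + q^2 = n_2^2$ in the rare squares. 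Since every tile edge is horizontal or vertical, refraction is exactly conservation of the tangential component: crossing a vertical edge preserves $q$ and recomputes $p$ from the new index, crossing a horizontal edge preserves $p$ and recomputes $q$, and total internal reflection (a sign flip of the normal component) occurs precisely when the preserved tangential component exceeds the index of the square the ray is trying to enter. On the checkerboard each crossing toggles the color, so a trajectory is a concatenation of straight segments whose momentum evolves under this finite list of transmission and reflection rules.

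The heart of the argument, and the step carrying all the work, is to show that $\kappa > \sqrt{2}$ confines the momentum to finitely many states. I would track $q^2$ along the dense-square segments. Reflections and a double crossing of the same edge family leave $q^2$ unchanged, while the only moves that change it transmit across one edge family and then the other, shifting $q^2$ by exactly
\[
\Delta = n_1^2 - n_2^2 .
\]
Hence the reachable values of $q^2$ lie in the arithmetic progression $\{\,q_0^2 + k\Delta : k \in \Z\,\}$ intersected with the physical window $[0, n_1^2]$, a finite set whose cardinality is about $n_1^2/\Delta + 1$. The refraction coefficient enters quantitatively here: since $n_1^2/\Delta = (1 - \kappa^{-2})^{-1}$, the inequality $\kappa > \sqrt{2}$ is exactly equivalent to $n_1^2 < 2\Delta$, which forces the window to contain at most two points of the progression. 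Thus when $\kappa > \sqrt{2}$ every trajectory visits at most two transverse slopes in each medium, so only finitely many directions occur; below the threshold the progression fills the window with more and more slopes and the reduction degrades. Making this rigorous --- enumerating the admissible transitions, checking which progression values are actually attained, and tracking the critical-angle constraints that gate each transmission --- is the main obstacle.

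Finally I would convert a flow in finitely many directions on the $\Z^2$-periodic medium into an interval exchange. Passing to the quotient by the period lattice turns the tiling into a flat torus carrying a finite set of edge-segments along which the ray refracts or reflects; \emph{unfolding} the total-internal-reflection bounces replaces them with straight continuations, so the motion becomes a genuine linear flow in a single direction on a compact translation surface assembled from finitely many square tiles. The first-return map of this linear flow to a transversal segment is an orientation-preserving piecewise isometry with finitely many continuity intervals, i.e.\ an interval exchange transformation, and it encodes the full dynamics of the original refracting trajectory; the bound from the previous step is exactly what keeps the number of intervals finite.
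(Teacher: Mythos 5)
First, a point of order: the paper does not prove this statement --- it is quoted as background from Glendinning \cite[\S 1]{glendinning} --- so there is no in-paper proof to compare against, and your proposal has to be judged on its own terms. Your first two steps are sound and do capture the right mechanism: the tangential wave-vector component is conserved at each edge, reflections and same-family double crossings preserve $q^2$, mixed double crossings shift $q^2$ by $\pm\Delta$ with $\Delta = n_1^2 - n_2^2$, and $\kappa > \sqrt{2}$ is indeed equivalent to $n_1^2 < 2\Delta$, hence to the window $[0,n_1^2]$ meeting the progression $\{q_0^2 + k\Delta\}$ in at most two points. Equivalently, the critical angle drops below $45^\circ$, so in a dense square transmission through vertical edges and transmission through horizontal edges become mutually exclusive; this ``channeling'' is the geometric content of Glendinning's threshold.

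The gap is in step 3, and it is flagged by an internal inconsistency in your own write-up: the progression argument gives \emph{finitely many} directions for every $\kappa > 1$, not just $\kappa > \sqrt{2}$ (the window always contains at most $n_1^2/\Delta + 1 < \infty$ points), so ``finitely many directions'' cannot be what the hypothesis buys, and your claim that below the threshold ``the progression fills the window with more and more slopes'' contradicts your own finiteness assertion. The ``standard fact'' you then invoke --- finitely many directions on a periodic medium yields a translation surface whose cross-section map is an IET --- is a fact about systems whose edge interactions are \emph{isometries} (rational billiards, translation flows). Refraction is not an isometry: parametrizing edges by arclength, the flow map from a vertical edge to a horizontal edge has Jacobian $|p/q|$, the return from a horizontal to a vertical edge has Jacobian $|q'/p|$, and a refraction at a horizontal edge changes $|q|$; so the cross-section return map is a priori only piecewise \emph{affine} (an affine interval exchange), and affine IETs are not in general conjugate to IETs --- they can have attractors and wandering intervals. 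To get an honest IET you must prove that the slopes cancel to $\pm 1$ on every return branch, and this is exactly where $\kappa > \sqrt{2}$ enters a second time: with at most two momentum levels, every vertical-to-horizontal crossing and its matching horizontal-to-vertical crossing occur at the same transverse momentum $|q| = q_0$ (the intermediate excursions at level $q_1$ enter and leave dense squares through the same edge family, contributing canceling factors), so every branch is an isometry; with three or more levels, i.e.\ $\kappa \le \sqrt{2}$, mismatched factors $q_b/q_a \neq 1$ survive and the reduction genuinely fails. This measure-theoretic verification --- identifying the invariant transverse measure and showing the return map is a piecewise isometry rather than merely piecewise affine --- is the heart of the theorem, and it is the step your outline delegates to a ``standard fact'' that does not apply.
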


Arnaldo Nogueira was the first to study ``IETs with flips,'' where the orientation is reversed on at least one interval. His study was motivated by flows on non-orientable surfaces; see \cite[Figure 1]{nonergodic}, which can be interpreted as modified tiling billiards on a rectangle.

For an extensive survey comparing the dynamics of tiling billiards on various tilings with the dynamics of inner billiards and outer billiards, see \cite[\S 1]{icerm}.

\subsection{Acknowledgments}
This project began at the SMALL REU at Williams College, funded by National Science Foundation REU Grant DMS - 1347804. Additional funding came from the Williams College Finnerty Fund, the Bronfman Science Center of Williams College, and the Clare Boothe Luce Program of the Henry Luce Foundation.

The conferences  ``Cycles on Moduli Spaces, Geometric Invariant Theory, and Dynamics'' at ICERM in August 2016, ``Teichm\"uller Space, Polygonal Billiard, Interval Exchanges'' at CIRM in February 2017\footnote{DD's lecture on this paper is available at \href{https://www.youtube.com/watch?v=CO5bVlRWmow}{https://www.youtube.com/watch?v=CO5bVlRWmow}} , and ``Teichm\"uller Dynamics, Mapping Class Groups and Applications'' at Institut Fourier in June 2018 provided rich research environments.

We thank Pat Hooper, Pascal Hubert, Curt McMullen, Arnaldo Nogueira, Olga Paris-Romaskevich, Rich Schwartz, Sasha Skripchenko, Dylan Thurston, and Barak Weiss for productive conversations about this project.

\section{Tiling billiards on triangle tilings, and folding}\label{sec:basic}

\subsection{Triangle tilings}

\begin{definition}\label{basiclabelling}\label{refracrule}

Label the angles of the tiling triangle $\alpha$, $\beta$, and $\gamma$ in order of increasing size, and orient the triangle so they are ordered counterclockwise. The sides opposite these angles are $A$, $B$, and $C$ respectively. Scale the triangle so that it is circumscribed by the unit circle. 
Rotate the tiling so that $C$ is horizontal. A \emph{positively-oriented} triangle has $C$ at the bottom, and a \emph{negatively-oriented} triangle has $C$ at the top. In this paper, the blue triangles are positively oriented and the red triangles are negatively oriented.
\end{definition}

Many billiard trajectories on triangle tilings exhibit periodicity, which comes in two types:

\begin{definition}\label{def:dp}
We call a trajectory \emph{periodic} with period $n$ if it repeats after intersecting with $n$ edges. We call a trajectory \emph{drift-periodic} if it is periodic up to a ``drift'', i.e. if the trajectory is invariant under translation by a nonzero vector. See Figure \ref{fig:per-dp}.
\end{definition}

\begin{figure}[!h]
\begin{center}
\includegraphics[width=0.6\textwidth]{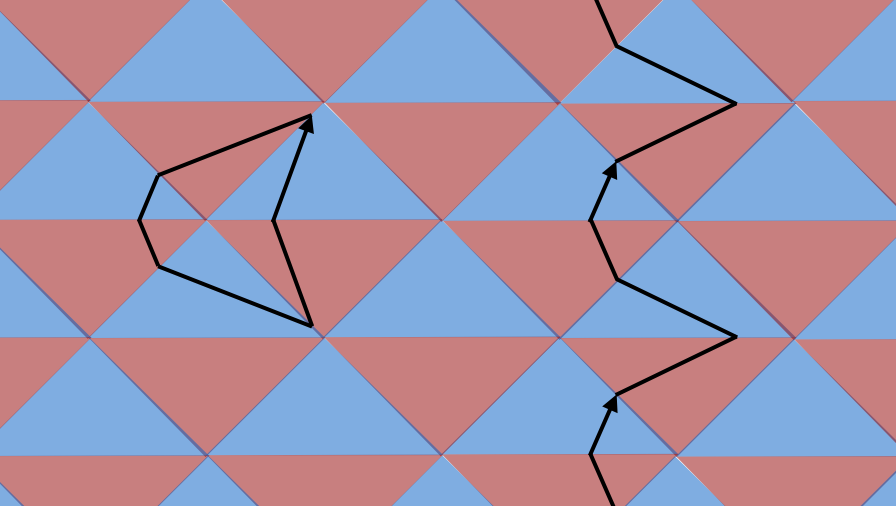}
\caption{A periodic trajectory with period 6 (left) and a drift-periodic trajectory with period 4 (right) on the isosceles right triangle tiling \label{fig:per-dp}}
\end{center}
\end{figure}

\subsection{Folding}

An important tool in studying inner billiards on polygons is \emph{unfolding}, in which a new copy of the table is created at each bounce. For tiling billiards, we use the complementary action of \emph{folding}, where we fold up the plane at each refraction.

\begin{definition}\label{foldtwotriangles}
Triangle $T_1$ is \emph{folded onto} adjacent triangle $T_2$ by reflecting triangle $T_1$ across the edge shared by triangles $T_1$ and $T_2$ (Figure \ref{refractionrule}(b)).
A connected collection of triangles from the triangle tiling is in the \emph{folded position} if it is folded on every edge between adjacent triangles in the collection.
\end{definition}

\begin{lemma}\label{folding}
Let $T_1$ and $T_2$ be two adjacent triangles of a tiling with a trajectory passing through their shared edge. When $T_1$ is folded onto $T_2$, the piece of the trajectory in $T_1$ and the piece of the trajectory in $T_2$ lie on the same line.
\end{lemma}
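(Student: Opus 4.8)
The plan is to show that the refraction rule and the folding operation are \emph{the same reflection}, so that folding automatically carries the incoming piece of the trajectory onto the line supporting the outgoing piece. First I would fix notation: let $\ell$ be the edge shared by $T_1$ and $T_2$, and let $P\in\ell$ be the point where the trajectory crosses, so that the piece $r_1\subset T_1$ and the piece $r_2\subset T_2$ are segments with common endpoint $P$ (the trajectory is continuous). I would place $\ell$ along a coordinate axis through $P$ and write the incoming direction of travel as a unit vector $\vec v$ with nonzero component normal to $\ell$, which is what makes the trajectory genuinely cross from $T_1$ into $T_2$ rather than bounce.

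Next I would record the refraction rule of Definition~\ref{refracrule} in vector form. The rule ``reflected across the edge'' means the outgoing ray makes the same angle with $\ell$ as the incoming ray but lies on the opposite side of $\ell$; in coordinates this reverses the component of $\vec v$ parallel to $\ell$ while preserving the component normal to $\ell$, giving outgoing direction $\vec w = R_\ell(-\vec v)$, where $R_\ell$ denotes reflection across $\ell$. Equivalently, the locus of $r_2$ is the reflection across $\ell$ of the locus of $r_1$: the incoming segment sits behind $P$ in direction $-\vec v$, and $R_\ell$ carries it exactly onto the outgoing segment, which leaves $P$ in direction $\vec w$.

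Then I would invoke that folding $T_1$ onto $T_2$ is, by Definition~\ref{foldtwotriangles}, precisely the reflection $R_\ell$ across $\ell$. Since $R_\ell$ fixes $P$ and, by the previous step, sends the locus of $r_1$ onto the locus of $r_2$, the folded image of $r_1$ lies along the same line as $r_2$, which is the assertion. To make this airtight I would verify that ``same line'' is genuinely forced and not merely ``equal angle with $\ell$'': two rays from $P$ on the $T_2$-side of $\ell$ making equal angles with $\ell$ are either equal or mirror images across the normal to $\ell$ at $P$, and the refraction rule (reversing the parallel, not the normal, component) is exactly what places the outgoing ray on the same side of the normal as $R_\ell(r_1)$, so the two coincide.

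The step I expect to be the main obstacle is precisely this last point: disentangling the coefficient $-1$ refraction rule from straight-line transmission (coefficient $+1$). Both rules preserve the angle the trajectory makes with $\ell$ and both send the trajectory to the opposite side of $\ell$; they differ only in which side of the normal the outgoing ray occupies. I therefore expect the crux of the write-up to be a careful, convention-fixed check — most transparently carried out in the coordinates above — that the reflect-across-the-edge rule is the one for which $R_\ell$ identifies the incoming and outgoing pieces, rather than reflecting $r_1$ to the mirror image of $r_2$ across the normal. Everything else reduces to the two standing facts that an isometry maps lines to lines and that a reflection fixes its axis pointwise.
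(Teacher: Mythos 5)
Your proposal is correct and is essentially the paper's own argument: the paper's entire proof is the one-line observation that the claim ``follows immediately from the refraction rule'' (with Figure~\ref{refractionrule}(b)), i.e.\ that the refraction rule and the fold are the \emph{same} reflection $R_\ell$ across the shared edge. Your coordinate check that the coefficient $-1$ rule reverses the parallel component (so that $R_\ell$ carries $r_1$ onto the line of $r_2$, rather than onto its mirror image across the normal) just makes explicit what the paper leaves to the figure.
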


\begin{proof}
This follows immediately from the refraction rule; see Figure \ref{refractionrule}(b).
\end{proof}

\begin{lemma}\label{circumcenters}
Let $T_1$ and $T_2$ be two adjacent triangles of a triangle tiling. When $T_1$ is folded onto $T_2$, the two triangles share a circumcenter.
\end{lemma}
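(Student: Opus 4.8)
The plan is to reduce the statement to two elementary facts: that the circumcenter of a triangle lies on the perpendicular bisector of each of its sides, and that an isometry carries circumcenters to circumcenters. Before that, though, I first need to pin down how two adjacent tiles are related, which is where the structure of a triangle tiling enters.

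First I would record the structural fact underlying everything: in a triangle tiling, two triangles sharing an edge $e$ are images of one another under the point reflection (rotation by $\pi$) about the midpoint $M$ of $e$. For the equilateral tiling this is immediate — two triangles sharing an edge form a rhombus, and the $\pi$-rotation about its center swaps them — and since a general triangle tiling is the affine image of the equilateral one (as in the definition in the introduction), and an affine map carries midpoints to midpoints while conjugating the point reflection about $M$ to the point reflection about the image of $M$, the same relation holds in every triangle tiling. So I write $T_1 = \{P, Q, R_1\}$ and $T_2 = \{P, Q, R_2\}$, where $P, Q$ are the shared vertices on $e$ and $R_2 = 2M - R_1$.

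Next I would translate folding into these terms. Folding $T_1$ onto $T_2$ reflects $T_1$ across the line $\ell$ through $P$ and $Q$; call the result $T_1' = \{P, Q, R_1'\}$, where $R_1'$ is the mirror image of $R_1$ across $\ell$. The goal becomes showing that the circumcenters of $T_1'$ and $T_2$ coincide. Both triangles have $P$ and $Q$ among their vertices, so both circumcenters lie on the perpendicular bisector $m$ of the segment $PQ$, which is the line through $M$ perpendicular to $\ell$. The key step is then to exhibit a symmetry matching the two triangles: let $\mu$ denote reflection across $m$. Since $m$ is the perpendicular bisector of $PQ$, $\mu$ swaps $P$ and $Q$; and a short computation — most transparent in coordinates with $\ell$ the $x$-axis and $M$ the origin, where $R_1 = (x,y)$, so $R_1' = (x,-y)$ and $R_2 = 2M - R_1 = (-x,-y)$ — shows $\mu(R_1') = R_2$. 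Hence $\mu$ carries $T_1'$ onto $T_2$, and therefore carries the circumcircle of $T_1'$ onto that of $T_2$ and the circumcenter of $T_1'$ onto the circumcenter of $T_2$. But the circumcenter of $T_1'$ already lies on $m$, so it is fixed by $\mu$; its image, the circumcenter of $T_2$, is thus the same point.

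I expect the only real content — and hence the main thing to get right — to be the first step, that adjacency in a triangle tiling is always realized by a point reflection across the midpoint of the shared edge; once that is in hand, the remainder is the elementary observation that reflecting $R_1$ across $\ell$ and point-reflecting it across $M$ produce points that are mirror images across $m$, combined with the fact that circumcenters sit on perpendicular bisectors of chords. It is worth noting that the degenerate case $y = 0$ never arises, since the third vertex of a genuine triangle does not lie on $\ell$, so both circumcenters are well defined throughout.
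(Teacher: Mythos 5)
Your proof is correct, and it reaches the same conclusion by a somewhat different route than the paper. The paper's proof is a two-line congruence argument: after folding, both triangles contain the shared side $S$, so both circumcenters lie on the perpendicular bisector of $S$; since the triangles are congruent and $S$ corresponds to the same labeled side in each, the distance along that bisector to the circumcenter is the same, hence the circumcenters coincide. You instead first establish the structural fact that adjacent tiles are point reflections of one another through the midpoint of the shared edge (deduced from the equilateral case via the affine-image definition of a triangle tiling), and then exhibit an explicit symmetry: the folded copy of $T_1$ and the tile $T_2$ are mirror images across the perpendicular bisector $m$, so the reflection across $m$ carries one circumcenter to the other, while both already lie on $m$ and are therefore equal. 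What your version buys is rigor at the one point the paper glosses over: ``the distance along the bisector is the same'' really needs to be a statement about \emph{signed} distance (the circumcenter can lie on either side of $S$, e.g.\ for obtuse triangles), and your reflection $\mu$ settles exactly that, since a point of $m$ fixed by $\mu$ must map to itself. What the paper's version buys is brevity, and it makes visible the fact used repeatedly later (e.g.\ in Theorem \ref{billiardsiet}) that the folded triangle and its neighbor are reflections of one another across the shared perpendicular bisector. Your preliminary point-reflection lemma is also genuinely part of the structure of triangle tilings that the paper takes for granted (it underlies the claim that $S$ ``corresponds to the same side in both triangles''), so proving it from the definition is a legitimate strengthening rather than wasted work.
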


\begin{proof}
When $T_1$ is folded onto $T_2$ over their shared side $S$, $T_1$ and $T_2$ share the perpendicular bisector to $S$. As $T_1$ and $T_2$ are congruent and $S$ corresponds to the same side in both triangles, the distance along this bisector to the circumcenter is the same for both triangles, so the folded triangles share a circumcenter (see Figure \ref{foldedtri}).
\end{proof}

\begin{figure}[!h]
\begin{center}
\includegraphics[width=0.8\textwidth]{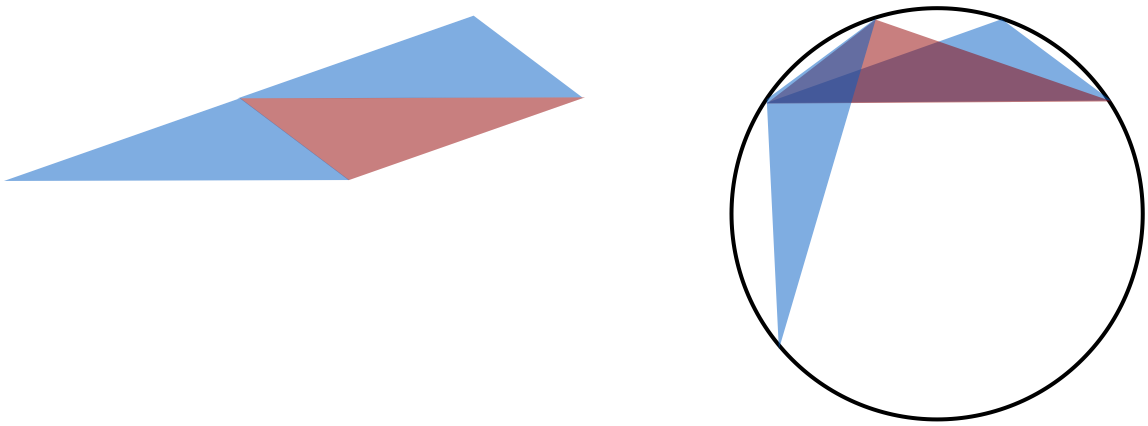}
\caption{After folding the three triangles on the left, all of the triangles share a circumcenter (Lemma \ref{circumcenters}), and the blue triangles are rotations of one another about the circumcenter (Lemma \ref{lem:rotations}). \label{foldedtri}}
\end{center}
\end{figure}

\begin{lemma}\label{circdist}
Given a trajectory on a triangle tiling, the perpendicular distance between the circumcenter of given triangle and a piece of the trajectory in that triangle (or its extension) is the same for all triangles hit by the trajectory.
\end{lemma}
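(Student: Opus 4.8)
The plan is to reduce the statement to a claim about two consecutively visited triangles and then propagate it along the trajectory. The essential observation is that folding one triangle onto an adjacent one is a reflection, hence an isometry, and isometries preserve perpendicular distances between points and lines. So the result should follow almost immediately from Lemmas \ref{folding} and \ref{circumcenters} once the local step is set up correctly.

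First I would establish the local step. Let $T_1$ and $T_2$ be two adjacent triangles hit consecutively by the trajectory, sharing the edge $S$ across which the trajectory refracts, and fold $T_1$ onto $T_2$ by reflecting across $S$. By Lemma \ref{folding}, after folding the two pieces of the trajectory lie on a single line $\ell$. By Lemma \ref{circumcenters}, after folding the two triangles share one circumcenter $O$. Now the perpendicular distance from the circumcenter of $T_2$ to the trajectory piece in $T_2$ is exactly $d(O,\ell)$. On the other hand, the perpendicular distance from the circumcenter of $T_1$ to its trajectory piece is unchanged by the reflection; and after reflecting, that circumcenter maps to $O$ while the trajectory piece maps onto $\ell$, so this distance also equals $d(O,\ell)$. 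Hence the two perpendicular distances agree.

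To finish, I would chain this local equality along the whole trajectory. The trajectory visits a sequence of triangles $T_1, T_2, T_3, \dots$ in which each consecutive pair is adjacent across an edge it crosses, so applying the local step to each consecutive pair and using transitivity shows the perpendicular distance is constant along the sequence, and thus the same for every triangle the trajectory meets.

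I expect no serious obstacle here: with the two preceding lemmas in hand, the content is really just the fact that folding is an isometry. The only point requiring care is to measure distance to the \emph{extended line} through the trajectory piece rather than to the segment itself, as the statement allows. This guarantees the distance is well defined in every triangle even when the perpendicular foot falls outside the triangle, and the isometry argument then applies verbatim.
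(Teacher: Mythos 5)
Your proposal is correct and matches the paper's own argument: both rest on Lemmas \ref{folding} and \ref{circumcenters}, propagate the equality one adjacent pair of triangles at a time, and use the fact that folding is an isometry so the distance is the same folded or unfolded. Your write-up is just a more explicit spelling-out of the same reasoning.
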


\begin{proof}
In the folded position, the triangles share a circumcenter (Lemma \ref{circumcenters}), and the pieces of trajectory all lie on a single line (Lemma \ref{folding}). The local arguments in both Lemmas extend to all of the triangles that the trajectory crosses, by considering one adjacent pair of triangles at a time. Thus, the distance from the circumcenter to the line is the same for all triangles in the folded position, and consequently in the unfolded position as well.
\end{proof}

The distance from the trajectory to the circumcenter of the triangle is therefore an \emph{invariant} of the system. This allows us to transform the two-dimensional tiling billiards system into a one-dimensional system of interval exchanges on a circle, which we do in \S\ref{sec:circle}.

\begin{corollary}\label{allincircle}
The entire triangle tiling can be folded along tiling edges. Then:
\begin{enumerate}
\item every triangle in the plane lies in one circumscribing circle, and
\item for a tiling billiards trajectory on the tiling, all of the pieces of trajectory lie on a single chord of the circle.
\end{enumerate}
\end{corollary}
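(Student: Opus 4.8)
The plan is to promote the two-triangle statements of Lemmas \ref{folding} and \ref{circumcenters} to the whole tiling by an induction along chains of adjacent triangles, after first checking that folding the entire tiling is globally consistent.

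First I would set up the folding as a developing map. Fix one triangle $T_0$ with its circumcenter at the origin, inscribed in the unit circle as in Definition \ref{basiclabelling}. To any chain $T_0, T_1, \dots, T_n$ of successively adjacent tiling triangles I associate the isometry $h_n = r_{e_1} r_{e_2} \cdots r_{e_n}$, where $e_i$ is the edge shared by $T_{i-1}$ and $T_i$ and $r_{e_i}$ is the reflection across that edge in its position in the tiling; the folded image of $T_n$ is $h_n(T_n)$, and consecutive folded images are related by a single reflection, so Lemmas \ref{folding} and \ref{circumcenters} apply to each adjacent pair.

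The crux, and the main obstacle, is to show that $h_n$ depends only on $T_n$ and not on the chosen chain, i.e. that the folding is well defined. Because the tiling fills the simply connected plane, every closed chain in the adjacency (dual) graph can be decomposed into loops that each encircle a single vertex, so it suffices to prove that the holonomy $r_{e_1} r_{e_2} \cdots r_{e_6}$ around each vertex $v$ is the identity. Here I would invoke the defining structure of a triangle tiling: it is a grid of parallelograms with parallel diagonals, so the six edges meeting at $v$ lie on exactly three lines through $v$, with opposite edges collinear. Writing $R_1, R_2, R_3$ for the reflections across these three lines and taking the edges in cyclic order, the holonomy becomes $R_1 R_2 R_3 R_1 R_2 R_3 = (R_1 R_2 R_3)^2$. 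Since $R_1 R_2 R_3$ is a product of three reflections across concurrent lines it is itself a single reflection, and its square is the identity; hence the holonomy around every vertex is trivial and the developing map is consistent. Everything else is routine, but this vanishing genuinely uses the special geometry—three lines through each vertex—without which the holonomy need not be trivial.

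With well-definedness established, both conclusions follow by induction on $n$ from the local lemmas. For part (1), Lemma \ref{circumcenters} shows every single fold preserves the circumcenter, so each folded triangle $h_n(T_n)$ has its circumcenter at the origin; being congruent to $T_0$, it is inscribed in the same unit circle. For part (2), Lemma \ref{folding} shows every single fold keeps adjacent pieces of the trajectory collinear, so by induction all pieces lie on one line, namely the line at the invariant distance from the origin furnished by Lemma \ref{circdist}; as the folded triangles all lie inside the unit circle, this line meets the circle in a chord.
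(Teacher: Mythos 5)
Your proof is correct, and its skeleton is the same as the paper's: both arguments promote the two local lemmas (Lemmas \ref{folding} and \ref{circumcenters}) to the whole plane by induction along chains of adjacent triangles. The genuine difference is that the paper's proof is \emph{only} that induction---it picks a path of adjacent triangles between any two triangles, folds along it, and stops, never asking whether the folded position depends on the chosen path. Your developing-map setup and the vertex-holonomy computation $(R_1 R_2 R_3)^2 = \mathrm{id}$ (valid because the six edges at a tiling vertex lie on three concurrent lines, and a product of three reflections in concurrent lines is again a reflection) supply exactly that consistency statement, and this is real added content: the corollary's opening claim that ``the entire triangle tiling can be folded along tiling edges,'' in the sense of Definition \ref{foldtwotriangles} (folded across \emph{every} edge between adjacent triangles simultaneously), does require path-independence, which the paper passes over with ``one triangle at a time.'' It is worth noting, though, that the two numbered conclusions can be reached without it, which is why the paper's shorter argument still delivers them: for (1), whichever chain is used, each successive fold preserves the circumcenter, so every folded image lands in the fixed unit circle no matter what choices are made; and for (2), the trajectory itself singles out a canonical chain, and induction with Lemma \ref{folding} along that chain puts all pieces on one line, which is a chord at the invariant distance of Lemma \ref{circdist}. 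So your argument is a strict refinement of the paper's: the same inductive core, plus the well-definedness that makes ``folding the entire plane'' rigorous as a single global operation.
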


\begin{proof}
There is a path of adjacent triangles between any two triangles in the tiled plane, so we can fold up the plane by folding along every edge crossed in such a path.
\begin{enumerate}
\item When we fold along this path, by Lemma \ref{circumcenters}, the two triangles land in the same circumscribing circle. Since this applies to any pair of adjacent triangles, the entire plane can be folded, one triangle at a time, into a single circle (see Figure \ref{foldedtri}).
\item Since every fold takes pieces of trajectory onto the same line, this extends globally and all pieces of trajectory lie on the same line, which is a chord of the circle.
\end{enumerate}
\end{proof}

\begin{lemma}\label{lem:rotations}
In the folded state, all of the triangles of a given orientation are rotations of each other about the circumcenter.
\end{lemma}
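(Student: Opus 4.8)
The plan is to fix the global folded configuration supplied by Corollary \ref{allincircle}, in which every triangle of the tiling is inscribed in a single circle with common circumcenter $O$, and then to show that passing between folded triangles of the same orientation is always realized by a rotation about $O$. The engine of the argument is an explicit description of how two \emph{adjacent} folded triangles sit relative to one another; I then propagate that description along a path of triangles and control the outcome using the fact that the tiling is $2$-colorable.

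First I would record the local picture for a single fold. Let $T_1, T_2$ be adjacent triangles sharing edge $S$, with folded images $\hat T_1, \hat T_2$. By Lemma \ref{circumcenters} these are inscribed in one circle centered at $O$, and both contain the image chord of $S$. Since $\hat T_1$ and $\hat T_2$ are congruent with $S$ as corresponding side, their angles opposite $S$ are equal; by the inscribed angle theorem their third vertices therefore lie on the \emph{same} arc cut off by that chord, i.e.\ on the same side of it. Two congruent triangles inscribed in one circle, sharing a side and lying on the same side of it, realize the two possible orders of the remaining pair of angles at the chord's endpoints, so they are interchanged by the reflection across the perpendicular bisector of the shared chord. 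The crucial feature is that the perpendicular bisector of any chord passes through the center $O$, so adjacent folded triangles are exchanged by a reflection that fixes $O$.

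Next I would globalize. Given two triangles $T, T'$ of the same orientation, choose a path $T = S_0, S_1, \dots, S_m = T'$ of successively adjacent triangles; in the folded configuration each consecutive pair $\hat S_i, \hat S_{i+1}$ is interchanged by a reflection $\rho_i$ in a line through $O$, by the previous step. Hence $\hat T' = \rho_{m-1}\circ\cdots\circ\rho_0(\hat T)$ is obtained from $\hat T$ by a composition of reflections all fixing $O$; any such composition fixes $O$ and is orientation-preserving, hence a rotation about $O$, exactly when the number of factors is even. Because the tiling is $2$-colorable (positively versus negatively oriented, per Definition \ref{basiclabelling}), its triangle-adjacency graph is bipartite, so every path joining two same-orientation triangles has even length $m$; the parity is well defined and even. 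Thus $\hat T'$ is a rotation of $\hat T$ about $O$, which is the claim.

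The step I expect to require the most care is the local claim that the exchanging isometry of two adjacent folded triangles \emph{fixes} $O$, since this is precisely what makes the parity bookkeeping produce a rotation rather than an arbitrary orientation-preserving isometry. One must rule out the superficially plausible alternative that the two triangles sit on opposite sides of the shared chord and are swapped by reflection across the chord line itself — an isometry that fixes $O$ only when the chord is a diameter. The inscribed-angle argument above is exactly what excludes this: equal angles opposite $S$ force the third vertices onto a common arc. I would also flag the mild degenerate case in which the tiling triangle is isosceles with $S$ as its axis of symmetry, where the reflection is trivial and $\hat T_1 = \hat T_2$ as point sets; this affects neither the orientation bookkeeping nor the final conclusion.
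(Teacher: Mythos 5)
Your proof is correct, but it establishes the key fact---that the composite isometry is centered at the circumcenter---by a genuinely different mechanism than the paper. The paper's proof is three lines: each fold is a reflection, the composition of two reflections is a rotation, and since both the source triangle and its image are inscribed in the \emph{same} circle (Lemma \ref{circumcenters}), that rotation maps the circle to itself and hence must be centered at $O$. In other words, the paper never needs to locate the reflection axes at all; invariance of the common circumscribing circle pins down the center \emph{a posteriori}. You instead pin it down \emph{a priori}, by proving the stronger local statement that two adjacent folded triangles are exchanged by the reflection in the perpendicular bisector of their shared chord---a line through $O$---so that a composition of an even number of such reflections is automatically a rotation about $O$. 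Your route buys more refined information (explicit reflection axes, hence an explicit factorization of each rotation) and makes explicit the parity step, via bipartiteness of the adjacency graph, that the paper leaves implicit in the phrase ``two reflections.'' What it costs is the synthetic case analysis, which the paper's argument avoids entirely.

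Two points in that case analysis deserve tightening, though neither sinks the argument. First, the inscribed-angle argument forcing the third vertices onto the same arc is vacuous when the angle opposite $S$ equals $\pi/2$: then the chord is a diameter, both arcs subtend equal angles, and equal angles do not exclude the opposite-sides configuration. The conclusion survives because in exactly that case the reflection across the chord line also fixes $O$---but your text asserts the alternative is ``excluded,'' which is not quite what happens there. Second, the claim that the two triangles ``realize the two possible orders'' of the angles at the chord's endpoints does not follow from congruence and the inscribed picture alone: two \emph{directly} congruent triangles sharing a side, lying on the same side of it, would simply coincide. What forces the swapped order is that adjacent folded images have opposite chirality (one is obtained from the other by an odd number of reflections), equivalently that adjacent tiles in a triangle tiling are half-turn images of one another. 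This is worth stating explicitly, since it is precisely where the special structure of triangle tilings enters your local lemma.
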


\begin{proof}
Each fold is a reflection, and the composition of two reflections is a rotation. Since the triangles are inscribed in the same circle (Lemma \ref{circumcenters}), this rotation is about the center of that circle.
\end{proof}

\begin{remark}
The reader may wish to cut out and fold up the example tiling billiards trajectory in the Appendix, to tangibly understand the results of this section.

\begin{figure}[!h]
\begin{center}
\includegraphics[height=110pt]{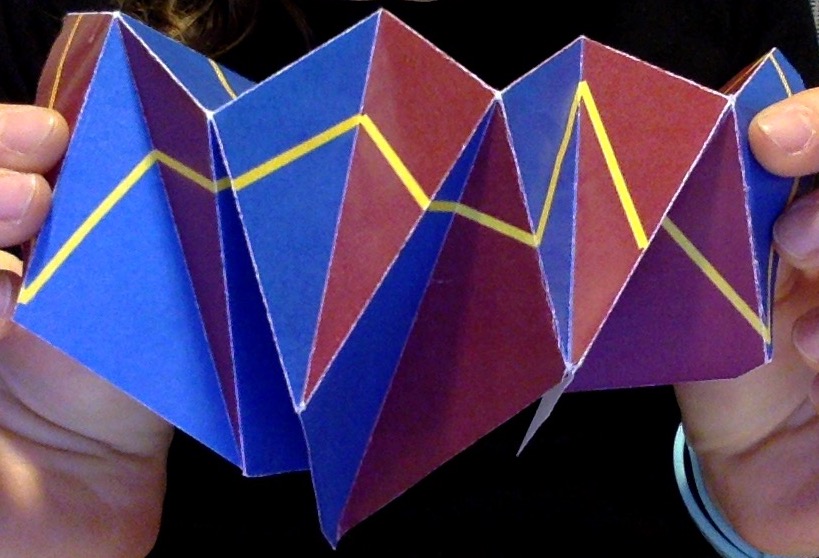} \ \
\includegraphics[height=110pt]{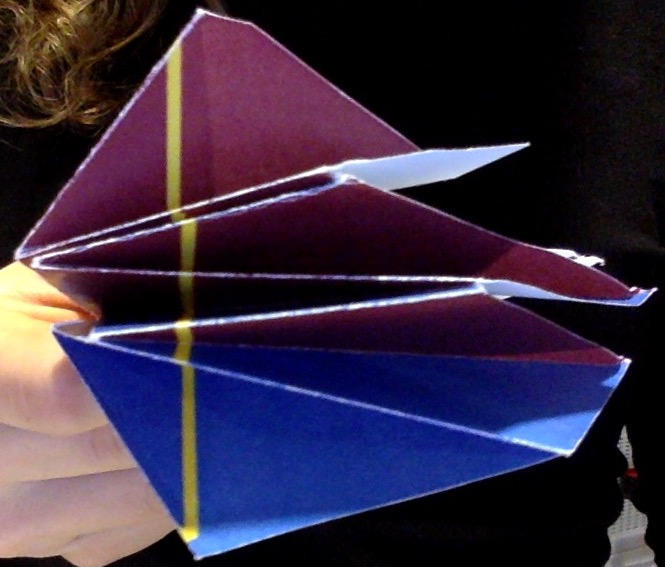} \ \
\includegraphics[height=110pt]{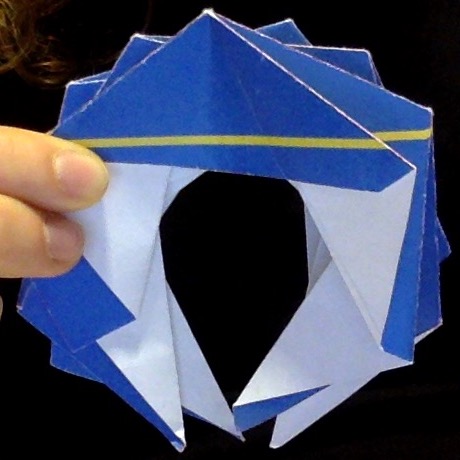} \ \
\caption{Folding up the example trajectory in the Appendix: (a) Folding along every edge of the tiling. (b) In the folded position, all the pieces of trajectory lie on the same line, and (c) all of the triangles are inscribed in the same circle, with the blue triangles on one side (shown) and red on the other. \label{fig:hands}}
\end{center}
\end{figure}

If the tiling triangle is obtuse, it is possible to actually fold the plane along every edge so that the triangles are all inscribed in the same circle, with an intact piece of paper (see Figure \ref{fig:hands}). The second author and Pat Hooper discovered this by folding up trajectories, but the construction is well known in the origami community. In mathematics, it is known as a \emph{Schwarz lantern}; Hermann Schwarz gave it as a counterexample to the assertion that the surface area of a triangulated approximation to a surface limits to the area of the surface.

If the tiling triangle is acute, it is not always possible to physically fold up the intact plane, since parts of the paper may run into each other. We can get around this problem by cutting out the trajectory along the edges of the tiling that the trajectory does \emph{not} intersect, at which point it is possible to fold along all of the edges of the tiling that the trajectory crosses, and see the triangles in their inscribing circle.
\end{remark}

\subsection{A trajectory crosses a triangle at most once}

Previous work on tiling billiards conjectured that trajectories on triangle tilings never exhibit dense behavior \cite[Conjecture 4.14]{icerm}. In Theorem \ref{sametrajectory}, we prove this by proving a much stronger result: A given trajectory only crosses a given triangle at most once.

First, we prove some preliminary results about how the trajectory moves around on the tiling. Given a starting triangle, any path returning to that triangle must cross an even number of edges, since every time an edge is crossed, the orientation of the triangle changes. Thus, we may consider edge crossings in pairs.

\begin{definition}\label{def:moves}
Recall that the angles of the triangles in the tiling are labeled $\alpha$, $\beta$, and $\gamma$ counter-clockwise in non-decreasing order. From a given positively-oriented triangle, there are six positively-oriented triangles that a trajectory can reach by crossing two edges (see Figure \ref{fig:moves}). Crossing edge $C$ and then $B$, the trajectory cuts past angle $\alpha$ in the counter-clockwise direction around the vertex adjacent to angle $\alpha$ in the negatively-oriented triangle, so we call this a $\alpha$-move. The other five moves are defined similarly, in Table \ref{table:moves}.
\end{definition}

\begin{figure}[!h]
\begin{center}
\includegraphics[width=0.8\textwidth]{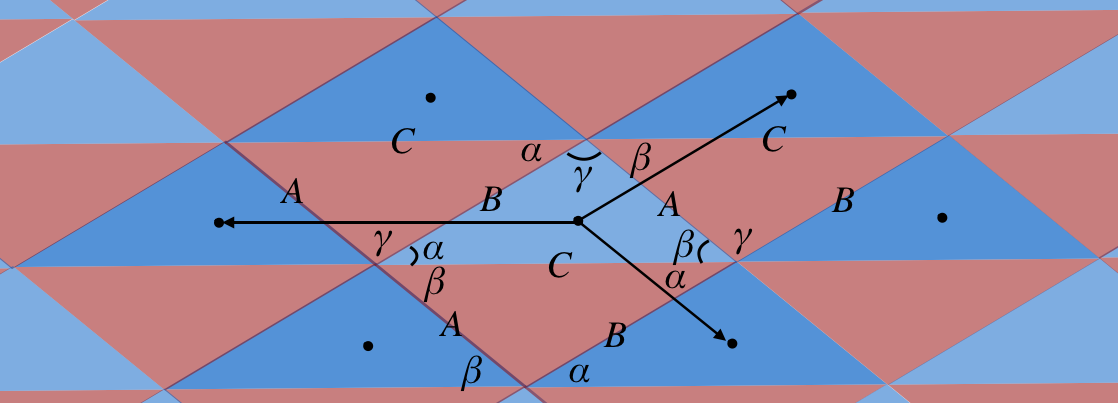}
\caption{From a given positively-oriented triangle (center), there are six adjacent positively-oriented triangles (dark blue), which are reached via the six two-edge moves described in Definition \ref{def:moves}. \label{fig:moves}}
\end{center}
\end{figure}

\begin{table}[!h]
  \begin{center}
\begin{tabular}{  r | c | r | c}
move name & edges crossed  & direction & translation  \\ \hline
$\alpha$-move & $CB$ & counter-clockwise & $\vec{A}$ \\
$-\alpha$-move & $BC$ & clockwise & $-\vec{A}$ \\
$\beta$-move & $AC$ & counter-clockwise & $\vec{B}$\\
$-\beta$-move & $CA$ & clockwise & $-\vec{B}$\\
$\gamma$-move & $BA$ & counter-clockwise & $\vec{C}$\\
$-\gamma$-move & $AB$ & clockwise & $-\vec{C}$
\end{tabular}
\caption{Two-edge moves to adjacent triangles of the same orientation as the original. The last column shows the translation vector to a reference point, in terms of the edge vectors of the triangle. Refer to Figure \ref{fig:moves} for an illustration.
\label{table:moves}}
  \end{center}
\end{table}

\begin{definition}
For a given path of even length, the integers \emph{$n_\alpha$, $n_\beta$, and $n_\gamma$} give the net number of $\alpha$, $\beta$, and $\gamma$ moves, respectively, in that path. More precisely, for $i = \alpha,\beta,\gamma$,
$$n_i= \text{(number of $i$ moves)}- \text{(number of $-i$ moves)}.$$
\end{definition}

\begin{lemma}\label{nx}
For any path in a triangle tiling, the path starts and ends in the same triangle if and only if $n_\alpha=n_\beta=n_\gamma$.
\end{lemma}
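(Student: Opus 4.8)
The plan is to convert this combinatorial statement into one about vectors in the plane and then apply linear independence. The essential input is already recorded in the last column of Table \ref{table:moves}: each two-edge move carries a chosen reference point (say, a fixed vertex) of the current positively-oriented triangle to the corresponding reference point of the next positively-oriented triangle by one of the vectors $\pm\vec{A},\pm\vec{B},\pm\vec{C}$. Because the tiling is an affine image of the equilateral triangle tiling, all positively-oriented triangles are translates of one another, so a given type of move always effects the same translation, regardless of where along the path it occurs. Hence the reference point of the final triangle differs from that of the initial triangle by the sum of the per-move vectors, namely
$$n_\alpha\,\vec{A} + n_\beta\,\vec{B} + n_\gamma\,\vec{C}.$$

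First I would record that, since like-oriented triangles are translates, two of them coincide if and only if their reference points coincide. A path returning to its starting triangle has even length (each edge crossing flips the orientation), so it decomposes into the two-edge moves of Definition \ref{def:moves}, with every intermediate triangle sharing the original orientation. Consequently the path starts and ends in the same triangle precisely when the net translation above is the zero vector.

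Next I would use the single relation among the edge vectors of any triangle: traversing its boundary once gives $\vec{A}+\vec{B}+\vec{C}=\vec{0}$. Eliminating $\vec{C}=-\vec{A}-\vec{B}$, the net translation becomes
$$(n_\alpha - n_\gamma)\,\vec{A} + (n_\beta - n_\gamma)\,\vec{B}.$$
Since the tiling triangle is nondegenerate, $\vec{A}$ and $\vec{B}$ are linearly independent, so this vanishes if and only if $n_\alpha - n_\gamma = 0$ and $n_\beta - n_\gamma = 0$, i.e. $n_\alpha=n_\beta=n_\gamma$. This gives the forward direction. For the converse one can argue directly: if $n_\alpha=n_\beta=n_\gamma=k$ then the net translation is $k(\vec{A}+\vec{B}+\vec{C})=\vec{0}$, so the endpoints coincide.

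The vector bookkeeping is routine; the one point deserving care is the claim that each move contributes a fixed translation independent of position, which rests on the translate structure of same-orientation triangles coming from the definition of a triangle tiling. I expect the main (mild) obstacle to be stating cleanly that a reference point determines its triangle among like-oriented triangles, and verifying the sign conventions of Table \ref{table:moves} so that the net displacement is exactly $n_\alpha\vec{A}+n_\beta\vec{B}+n_\gamma\vec{C}$ rather than some permutation or sign variant. Once that additive formula is pinned down, everything follows immediately from $\vec{A}+\vec{B}+\vec{C}=\vec{0}$ and the linear independence of $\vec{A}$ and $\vec{B}$.
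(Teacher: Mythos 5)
Your proof is correct, and it finishes differently from the paper's. Both arguments share the same backbone: a returning path has even length, each two-edge move effects a fixed translation (the last column of Table \ref{table:moves}), and the path closes up if and only if the net translation $n_\alpha\vec{A}+n_\beta\vec{B}+n_\gamma\vec{C}$ vanishes. Where you diverge is in extracting $n_\alpha=n_\beta=n_\gamma$ from that vanishing: the paper rearranges the moves by type, cancels opposites, and argues geometrically that the surviving moves must trace out a large triangle whose three sides contain equal numbers of tiling triangles; you instead invoke the single linear relation $\vec{A}+\vec{B}+\vec{C}=\vec{0}$, eliminate $\vec{C}$, and use linear independence of $\vec{A}$ and $\vec{B}$. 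Your route is shorter and arguably tighter: the paper's ``the remaining moves must form a triangle, all clockwise or all counterclockwise'' step is left informal, whereas linear independence settles the implication in one line and handles the degenerate case (no moves remaining) uniformly. What the paper's version buys in exchange is geometric insight --- it explains why closed move-sequences look like large triangles in the tiling, a picture that resonates with the later counting arguments in Section \ref{sec:tree}. The two points you flag as needing care (translation-invariance of each move type, and the sign conventions making the net displacement exactly $n_\alpha\vec{A}+n_\beta\vec{B}+n_\gamma\vec{C}$) are exactly the content of Table \ref{table:moves}, which the paper also takes as given, so your proof is complete at the same level of rigor and slightly beyond.
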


\begin{proof}
Suppose we have a path that starts and ends in the same triangle. Note that every such path is of even length, because triangle orientations alternate, so there is a whole number of moves. Each of the six moves always produces the same translation, relative to a reference point in each triangle of a given orientation (the last column of Table \ref{table:moves}). So, we can rearrange the moves in the sequence by their type ($\alpha$, $\beta$, or $\gamma$) and cancel out moves with their opposites, without changing the ending triangle of the sequence. Our new rearranged path, if any moves remain, forms a triangle of $\alpha$, $\beta$, and $\gamma$ moves in the positive or negative directions. In order to form a triangle, the remaining moves must either all be clockwise or all counterclockwise, and the numbers of tiling triangles on each edge of this larger triangle must be equal. This implies $n_\alpha=n_\beta=n_\gamma$ for the original sequence.

If, for a given path, we have $n_\alpha=n_\beta=n_\gamma$, then the path can be rearranged into a triangle as above, and we find that it ends on the triangle it started from.
\end{proof}

\begin{definition}
The \emph{turning} of a trajectory is the angle deviation from its initial direction. The \emph{total turning} is the net amount of turning for some number of refractions.
\end{definition}

\begin{lemma}\label{lem:turning}
The turning from an $\alpha$-move is $+2\alpha$, the turning from a $-\alpha$-move is $-2\alpha$, and similarly for $\beta$ and $\gamma$. The total turning for a sequence of two-edge moves is $2(\alpha n_\alpha + \beta n_\beta + \gamma n_\gamma)$.
\end{lemma}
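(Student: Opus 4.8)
The plan is to reduce the entire statement to the elementary fact that a composition of two reflections is a rotation through twice the angle between the mirror lines. The refraction rule (Figure~\ref{refractionrule}(a)) says that when the trajectory crosses an edge it is reflected across that edge; at the level of \emph{directions} this means the direction vector of the trajectory is reflected across the line carrying that edge, independent of where the crossing occurs, since all edges in a given parallel class of the tiling have the same direction. A two-edge move is exactly the crossing of two edges in succession, so its net effect on the direction is a product of two such reflections, hence a rotation of the direction vector, and the turning is by definition the angle of that rotation.

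First I would make the single-crossing step precise and then analyze one move, say the $\alpha$-move, which crosses $C$ and then $B$ (Table~\ref{table:moves}). The key geometric observation is that the two edges crossed in an $X$-move share the vertex carrying the angle $X$: for the $\alpha$-move, the edge $C$ of the starting positively-oriented triangle and the edge $B$ of the intermediate negatively-oriented triangle both pass through the vertex at which that negative triangle has its angle $\alpha$. Consequently the directed angle between the two mirror lines is exactly $\alpha$ (not its supplement $\pi-\alpha$), so the product of the two reflections is a rotation by $2\alpha$, and the counterclockwise sense recorded in Table~\ref{table:moves} fixes the sign as $+2\alpha$. The $-\alpha$-move crosses the same two edges in the opposite order, giving the inverse rotation $-2\alpha$, and the $\beta$- and $\gamma$-moves are identical after relabelling, with the crossed edges meeting at the vertex carrying angle $\beta$ or $\gamma$ respectively.

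Finally I would assemble the total turning. Each move rotates the direction vector about the origin in the space of directions by its individual turning, and rotations of the circle of directions commute and add their angles; hence the total turning along a sequence of moves is the sum of the individual turnings. Grouping the summands by type and cancelling each $X$-move against each $-X$-move leaves exactly $2\alpha\,n_\alpha + 2\beta\,n_\beta + 2\gamma\,n_\gamma$, as claimed. Equivalently, one could route this through the folded picture: by Lemma~\ref{lem:rotations} triangles of the same orientation differ by a rotation about the circumcenter, and since in the folded position the trajectory line is fixed, the real direction differs between two positive triangles by precisely that rotation.

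The main obstacle is the bookkeeping in the middle step rather than any deep idea: one must track that the second edge of a move belongs to the intermediate negative triangle, verify that the two crossed lines meet at the named angle $X$ and not at $\pi - X$, and pin down the sign by matching the orientation of the rotation to the counterclockwise/clockwise column of Table~\ref{table:moves}. Keeping these conventions consistent with the labelling of Definition~\ref{def:moves} and Figure~\ref{fig:moves} is where the care is needed.
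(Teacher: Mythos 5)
Your route is, at its core, the same as the paper's: the paper also reduces an $\alpha$-move to the two facts that the crossed edges $C$ and $B$ meet at a vertex of angle $\alpha$, and that composing the two reflections in those edges is a rotation by $2\alpha$ --- the paper just phrases the composition as folding across the two edges (via Corollary~\ref{allincircle}) and then unfolding, and it sums the per-move turnings exactly as you do. So the decomposition, the key geometric fact, and the bookkeeping are essentially identical; only the language (linear reflections of direction vectors versus folding the plane) differs.

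There is, however, one incorrect step you should repair, even though it self-cancels. A single crossing does \emph{not} reflect the travel direction across the line carrying the edge: that map keeps the component of the velocity along the edge and flips the normal component, which is specular (mirror) reflection --- it would send the trajectory back into the triangle it came from, and it contradicts, for instance, the fact that a trajectory meeting an edge perpendicularly passes straight through. What the refraction rule reflects across the edge is the trajectory as a point set; since the outgoing ray is traversed \emph{away} from the crossing point, the induced map on direction vectors is $v \mapsto -\sigma_e(v)$, where $\sigma_e$ is reflection in the edge line, i.e.\ it is reflection across the \emph{normal} to the edge (flip the tangential component, keep the normal one). Your move-level conclusion survives because the antipodal signs cancel in pairs: $(-\sigma_{e_2})\circ(-\sigma_{e_1}) = \sigma_{e_2}\circ\sigma_{e_1}$ is still the rotation by twice the directed angle from the first edge line to the second, which is $+2\alpha$ for an $\alpha$-move with the counterclockwise sign you identified, and additivity of rotation angles then gives $2(\alpha n_\alpha + \beta n_\beta + \gamma n_\gamma)$. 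But as written the single-crossing claim is false, and any argument that composed an odd number of crossings would inherit the error, so state the crossing rule correctly before composing.
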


\begin{proof}
In an $\alpha$-move, the trajectory crosses edges $C$ and $B$, which meet at a vertex with angle $\alpha$. Folding the tiling across these two edges (the top part of Figure \ref{fig:move_angles}) takes the three pieces of trajectory onto the same line (Corollary \ref{allincircle}). The overlapping parts of the plane each have angle $\alpha$, so unfolding the triangles back into the flat plane performs a rotation of $2\alpha$ (the bottom part of Figure \ref{fig:move_angles}). Proofs of the other five cases are analogous. The total turning from $\alpha$-moves is $2\alpha n_\alpha$, and similarly for $\beta$ and $\gamma$.
\end{proof}

\begin{figure}[!h]
\begin{center}
\includegraphics[width=0.9\textwidth]{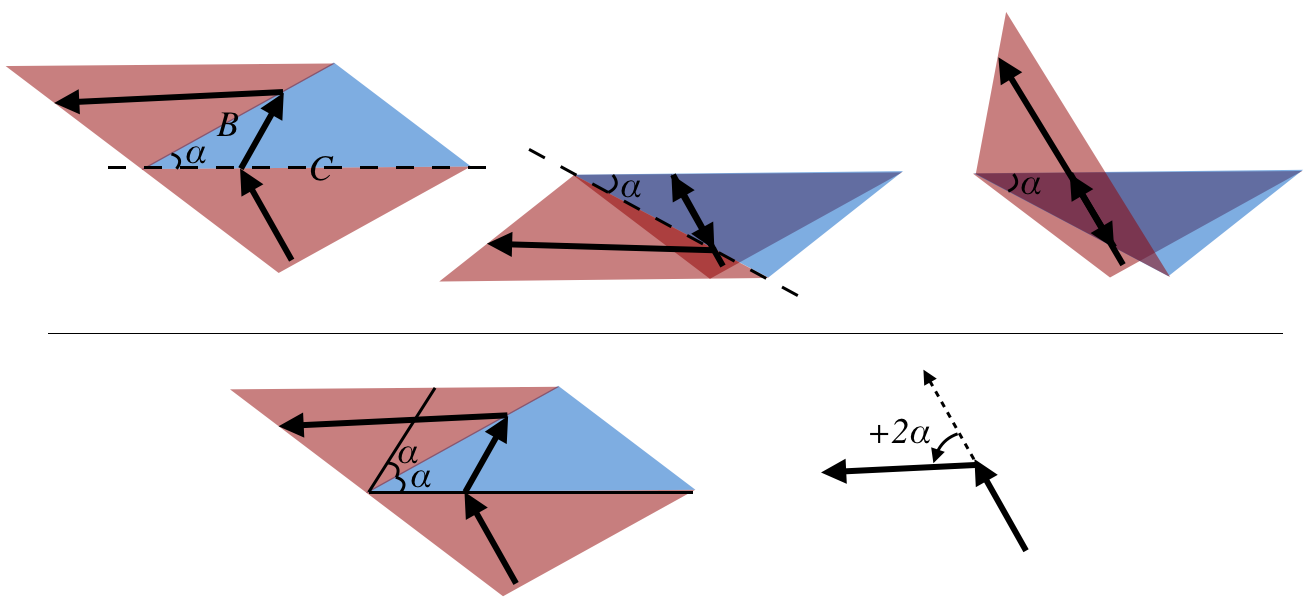}
\caption{A trajectory making a $+\alpha$-move makes a counter-clockwise turn of $+2\alpha$. Top: We fold the trajectory across the edges $C$ and $B$ crossed by the trajectory, along each of the dashed lines. Bottom: The two folds result in a turning of $+2\alpha$ in the trajectory's direction.  \label{fig:move_angles}}
\end{center}
\end{figure}

\begin{proposition}\label{no-dp}
If $\alpha,\beta$ and $\gamma$ are \emph{not} rationally related, then there is \emph{no} drift-periodic trajectory.
\end{proposition}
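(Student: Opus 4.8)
The plan is to combine the two bookkeeping lemmas, Lemma~\ref{nx} and Lemma~\ref{lem:turning}, with the fact that a drift is a pure translation. Throughout I interpret ``not rationally related'' to mean that $\alpha,\beta,\gamma$ are linearly independent over $\mathbb{Q}$, i.e.\ that the only integer solution of $a\alpha+b\beta+c\gamma=0$ is $a=b=c=0$; note this is consistent with the standing identity $\alpha+\beta+\gamma=\pi$, which becomes a rational relation only after $\pi$ is adjoined. Suppose, for contradiction, that a drift-periodic trajectory exists, invariant under a nonzero translation $v$. Since $v$ is a symmetry of the tiling it preserves the orientation of triangles, so one drift period carries a positively-oriented triangle to another positively-oriented triangle; hence it crosses an even number of edges and decomposes into a whole number of the two-edge moves of Definition~\ref{def:moves}. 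Let $n_\alpha,n_\beta,n_\gamma$ be the net move counts over one period.

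Next I would exploit that a translation preserves direction exactly. Over one drift period the trajectory returns to its original direction as a directed ray, so its total turning is an integer multiple of $2\pi$, say $2\pi m$. By Lemma~\ref{lem:turning} the total turning equals $2(\alpha n_\alpha+\beta n_\beta+\gamma n_\gamma)$, whence
\[
\alpha n_\alpha+\beta n_\beta+\gamma n_\gamma=\pi m .
\]
Substituting $\pi=\alpha+\beta+\gamma$ and collecting terms gives
\[
(n_\alpha-m)\alpha+(n_\beta-m)\beta+(n_\gamma-m)\gamma=0 .
\]
Because $\alpha,\beta,\gamma$ are linearly independent over $\mathbb{Q}$ and the coefficients $n_\alpha-m,\,n_\beta-m,\,n_\gamma-m$ are integers, each coefficient must vanish, so $n_\alpha=n_\beta=n_\gamma=m$.

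Finally I would invoke Lemma~\ref{nx}: the equality $n_\alpha=n_\beta=n_\gamma$ forces the drift period to begin and end in the \emph{same} triangle, so the net translation is $0$, contradicting $v\neq 0$. This contradiction shows that no drift-periodic trajectory can exist. The computational steps are immediate once the two lemmas are in hand; I expect the only delicate point to be the reduction in the first paragraph---verifying that one drift period really is an orientation-preserving return given by a whole number of two-edge moves, so that $n_\alpha,n_\beta,n_\gamma$ are well defined and Lemmas~\ref{nx} and~\ref{lem:turning} genuinely apply---together with pinning down the precise meaning of ``rationally related'' so that the substitution $\pi=\alpha+\beta+\gamma$ legitimately converts the turning identity into a rational dependence among $\alpha,\beta,\gamma$.
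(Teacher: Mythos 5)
Your proof is correct and is essentially the paper's own argument: both rest on Lemma~\ref{nx} and Lemma~\ref{lem:turning}, the observation that a drift period forces the total turning $2(\alpha n_\alpha+\beta n_\beta+\gamma n_\gamma)$ to be a multiple of $2\pi$, and the substitution $\pi=\alpha+\beta+\gamma$ to extract an integer relation among the angles; you have merely organized it as a contradiction (independence $\Rightarrow$ $n_\alpha=n_\beta=n_\gamma$ $\Rightarrow$ same triangle by Lemma~\ref{nx} $\Rightarrow$ zero drift), whereas the paper runs it directly (distinct triangles $\Rightarrow$ not all $n_i$ equal $\Rightarrow$ rational relation). If anything, your write-up is more explicit about the two points the paper leaves tacit, namely that a drift period joins two triangles of the same orientation and hence consists of a whole number of two-edge moves, and that ``not rationally related'' must mean $\mathbb{Q}$-linear independence of $\alpha,\beta,\gamma$.
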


\begin{proof}
By Lemma \ref{nx}, a trajectory returns to its starting triangle if and only if  we have $n_\alpha = n_\beta = n_\gamma$, which occurs when the trajectory returns to the original triangle. So, if a trajectory is drift-periodic, then at least one of those equalities fails to hold. For a trajectory to be drift-periodic, it must be in the same position in different triangles. That is, at some point, the total turning $2(\alpha n_\alpha + \beta n_\beta + \gamma n_\gamma)$ must be a multiple of $2\pi$. However, since not all of $n_\alpha,n_\beta,$ and $n_\gamma$ are equal, this implies a rational relationship between $\alpha$, $\beta$, and $\gamma$.
\end{proof}

We use the following simple, somewhat surprising result about trajectories on triangle tilings to obtain numerous powerful results in Theorem \ref{sametrajectory}.

\begin{theorem}\label{thm:sametriangle}
If a trajectory passes through the same triangle twice, 
the trajectory is in the same position each time.
\end{theorem}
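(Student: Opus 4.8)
The plan is to use the folding machinery together with the invariants $n_\alpha, n_\beta, n_\gamma$ to show that if a trajectory revisits a triangle, it must arrive in exactly the same direction and at the same offset. First I would invoke Corollary \ref{allincircle} and Lemma \ref{lem:rotations}: in the folded position, every triangle of a given orientation is a rotation of every other about the common circumcenter, and all pieces of the trajectory lie on a single chord. Thus the state of the trajectory inside any positively-oriented triangle is completely encoded by the rotation angle that carries the ``home'' triangle to that triangle, together with the fixed chord. Since the chord is the same for every triangle (the perpendicular distance to the circumcenter is invariant by Lemma \ref{circdist}), the only thing that can differ between the first and second visit to the same triangle is the rotation angle, i.e. the total turning.

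The key step is then to compute this total turning. By Lemma \ref{nx}, a trajectory returns to its starting triangle if and only if $n_\alpha = n_\beta = n_\gamma$. Suppose the trajectory passes through some triangle $T$ at two different times; consider the sub-path between these two visits. This sub-path begins and ends in $T$, so by Lemma \ref{nx} it satisfies $n_\alpha = n_\beta = n_\gamma =: n$. By Lemma \ref{lem:turning}, the total turning over this sub-path is
\[
2(\alpha n_\alpha + \beta n_\beta + \gamma n_\gamma) = 2n(\alpha + \beta + \gamma) = 2n\pi,
\]
using $\alpha + \beta + \gamma = \pi$. Hence the total turning is an integer multiple of $2\pi$, so in the folded picture the trajectory direction on the second visit is identical to the direction on the first visit (the net rotation is trivial modulo $2\pi$).

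Finally I would combine the two facts: the direction is unchanged (turning $\equiv 0 \bmod 2\pi$) and the perpendicular offset from the circumcenter is unchanged (Lemma \ref{circdist}, the global invariant). A chord of a fixed circle is determined by its direction and its signed perpendicular distance from the center, and the trajectory's piece inside a given triangle is the restriction of this chord to that triangle; since both data agree, the two visits place the trajectory on the same chord through the same triangle, hence in the same position, which is exactly the claim. The main obstacle I anticipate is being careful that the invariants genuinely pin down the \emph{position} and not merely the \emph{line}: one must check that equal direction plus equal circumcenter-distance forces the same oriented chord (ruling out the reflected chord at the same distance on the opposite side), which I expect to resolve by tracking orientation through the folding — the sign of the perpendicular offset is preserved because folding is applied consistently and the triangle returns with its original orientation (an even number of edge-crossings, as noted before Definition \ref{def:moves}).
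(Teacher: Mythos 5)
Your proposal is correct and follows essentially the same route as the paper's proof: both rest on Lemma \ref{nx} (a return to the starting triangle forces $n_\alpha=n_\beta=n_\gamma$) combined with Lemma \ref{lem:turning} to conclude that the total turning is $2n(\alpha+\beta+\gamma)=2\pi n$, hence trivial modulo $2\pi$. The only difference is that you spell out the final step---why trivial net rotation plus the circumcenter-distance invariant of Lemma \ref{circdist} pins down the oriented chord, and not merely the direction---which the paper compresses into the single sentence ``the trajectory has completed an integer number of rotations, and it is in the same position as it started in''; your added care (including the parity/orientation check ruling out the reflected chord) is a legitimate tightening, not a departure.
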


\begin{proof}
By Lemma \ref{lem:turning}, the total turning $\theta$ of the trajectory for a series of two-edge crossings is given by $\theta=2(\alpha n_\alpha+\beta n_\beta + \gamma n_\gamma)$.
Lemma \ref{nx} tells us that for a path that returns to the starting triangle, $n_\alpha=n_\beta=n_\gamma$, so $\theta=2(n_\alpha)(\alpha+\beta+\gamma)=2\pi n_\alpha$. So, the trajectory has completed an integer number of rotations, and it is in the same position as it started in.
\end{proof}

\begin{theorem}\label{sametrajectory}
The following hold true for any trajectory on a triangle tiling:
\begin{enumerate}
\item Every non-escaping trajectory is periodic. \label{nonescapeperiodic}
\item Every non-periodic trajectory escapes.\label{nonperiodicescapes}
\item Every periodic trajectory forms a simple, closed curve.\label{simpleclosedcurve}
\end{enumerate}
\end{theorem}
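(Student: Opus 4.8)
The plan is to deduce all three statements from Theorem~\ref{thm:sametriangle}, which I will treat as the engine of the entire argument. The key observation underlying everything is that a triangle tiling contains only finitely many triangles up to the discrete translation symmetry of the tiling, but more to the point, any bounded region of the plane contains only finitely many tiles. So I would first set up the dichotomy precisely: call a trajectory \emph{escaping} if it leaves every bounded region (equivalently, the set of triangles it visits is infinite), and \emph{non-escaping} otherwise.

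For part~\eqref{nonescapeperiodic}, suppose a trajectory is non-escaping. Then it stays within some bounded region, which contains only finitely many triangles; hence by the pigeonhole principle the trajectory must eventually enter some triangle a second time. Theorem~\ref{thm:sametriangle} then tells me that when it re-enters that triangle, it does so in \emph{exactly} the same position (same point of entry, same direction). Since the refraction rule is deterministic and depends only on the current position and direction within a triangle, the entire subsequent trajectory must coincide with the earlier one — the trajectory is periodic. The one subtlety to handle here is that I should argue the trajectory is periodic in forward time from the \emph{start}, not merely eventually periodic: because the refraction rule is also reversible (reflecting across an edge is an involution), the trajectory can be run backwards, so a repeat in position forces periodicity of the whole bi-infinite orbit, with no ``pre-period'' tail.

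Part~\eqref{nonperiodicescapes} is then immediate as the contrapositive of \eqref{nonescapeperiodic}: if a trajectory does not escape, it is periodic, so any trajectory that is not periodic must escape. I would state this in one line.

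For part~\eqref{simpleclosedcurve}, I want to show a periodic trajectory is a simple closed curve, i.e.\ it does not cross itself. The curve is closed because it is periodic. To see it is simple, I argue by contradiction: if the trajectory crossed itself, the crossing point would lie in the interior of some triangle (the trajectory is a straight segment inside each triangle, and self-intersection at a vertex is excluded since hitting a vertex leaves the trajectory undefined). Two distinct segments of the trajectory passing through the interior of a single triangle means the trajectory passes through that triangle twice in two \emph{different} positions, directly contradicting Theorem~\ref{thm:sametriangle}. I expect this last step to be the main obstacle, not because the idea is hard but because the bookkeeping must be careful: I need to rule out the degenerate possibility that the two passes through the triangle are the \emph{same} pass (i.e.\ the same period-segment, which of course does not count as a self-crossing), and I should confirm that ``same position each time'' in Theorem~\ref{thm:sametriangle} genuinely means the two chords through the triangle coincide as sets, so that they cannot cross transversally. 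Once that is pinned down, the contradiction closes and all three parts are established.
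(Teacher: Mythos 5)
Your overall route is the same as the paper's: all three parts are deduced from Theorem~\ref{thm:sametriangle} — part~\eqref{nonescapeperiodic} by pigeonhole on a repeated triangle, part~\eqref{nonperiodicescapes} as the contrapositive, and part~\eqref{simpleclosedcurve} by noting that a self-intersection would force two distinct trajectory segments in a single triangle. Your extra care in part~\eqref{nonescapeperiodic}, using reversibility of the refraction rule to rule out a pre-periodic tail, is a genuine refinement that the paper leaves implicit, and it is correct.

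The one genuine gap is in part~\eqref{simpleclosedcurve}: your case analysis is incomplete. You assert that a self-crossing point ``would lie in the interior of some triangle,'' justifying this only by excluding vertices. But a priori two branches of the trajectory could cross exactly at a non-vertex point of an \emph{edge} of the tiling — a point where the trajectory refracts — and such a point lies in the interior of no triangle, so your contradiction does not reach it. The paper handles this case explicitly: if the crossing occurs at an edge point, each of the two distinct local branches refracts there and therefore contributes a segment to \emph{both} triangles adjacent to that edge, producing two distinct segments in each of those triangles and again contradicting Theorem~\ref{thm:sametriangle}. The repair is one sentence and uses exactly the tool you already have, but as written your proof of part~\eqref{simpleclosedcurve} does not cover all possible self-intersections.
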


\begin{proof}
Each of these follows from Theorem \ref{thm:sametriangle}:

\begin{enumerate}
\item Suppose a trajectory is non-escaping. Then, the trajectory must cross some triangle of the tiling at least twice. By Theorem \ref{thm:sametriangle}, these two crossings are the same.
Thus, the trajectory is periodic.

\item This is the contrapositive of part $(1)$.

\item A periodic trajectory forms a closed curve, so we only need to show that a periodic trajectory cannot intersect itself. By Theorem \ref{thm:sametriangle}, if a trajectory intersects a given triangle of the tiling, it does so in exactly one line segment. Thus, a given trajectory cannot intersect itself within any triangle.
If the trajectory intersects itself at an edge of the tiling, it must contain two distinct line segments within each of the two triangles that meet at that edge, a contradiction. Therefore, the trajectory cannot intersect itself and so forms a simple, closed curve.
\end{enumerate}
\end{proof}

\begin{corollary} \label{nospiralnodense}
We prove Conjectures 4.13 and 4.14 of \cite{icerm}:
\begin{enumerate}
\item Trajectories on triangle tilings never spiral.
\item Trajectories on triangle tilings never fill a region of the plane densely.\label{nodense}
\end{enumerate}
\end{corollary}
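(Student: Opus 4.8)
The plan is to obtain both statements as immediate consequences of Theorems \ref{thm:sametriangle} and \ref{sametrajectory}, since all of the substantive work is already done. The single feature I would exploit throughout is that, by Theorem \ref{thm:sametriangle} (as already extracted in the proof of Theorem \ref{sametrajectory}(\ref{simpleclosedcurve})), a trajectory meets each tile of the tiling in at most one line segment.

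For part (1) I would argue by contradiction. A spiraling trajectory is, by its nature, an infinite trajectory that winds around indefinitely while remaining confined to a bounded region of the plane (accumulating on a point or a limit cycle). Since such a trajectory does not escape to infinity, Theorem \ref{sametrajectory}(\ref{nonescapeperiodic}) forces it to be periodic, and then Theorem \ref{sametrajectory}(\ref{simpleclosedcurve}) says it is a simple closed curve. But a simple closed curve closes up after finitely many refractions and cannot wind around infinitely, contradicting the hypothesis. Equivalently, confinement to a bounded region means the trajectory meets only finitely many tiles, so by pigeonhole it revisits one; Theorem \ref{thm:sametriangle} then makes the revisit coincide with the first visit, forcing periodicity and ruling out a genuine spiral.

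For part (\ref{nodense}) I would again argue by contradiction, supposing a trajectory is dense in some open region $U$ of the plane. I would choose any bounded open subset $V \subseteq U$; then $V$ meets only finitely many tiles. By Theorem \ref{thm:sametriangle} the trajectory intersects each of these tiles in a single line segment, so the trajectory's intersection with $V$ is a finite union of line segments, a set with empty interior and hence nowhere dense. This contradicts density in $U$, and the argument covers the periodic case automatically, since a periodic trajectory is itself just a finite closed curve. The only points needing care are pinning down the definition of \emph{spiral} so that confinement to a bounded region is built in (so that non-escape applies), and confirming that a finite union of segments cannot be dense in a two-dimensional region; neither is a real obstacle once the one-segment-per-tile property of Theorem \ref{thm:sametriangle} is in hand.
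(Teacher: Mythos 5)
Your proposal is correct and takes essentially the same approach as the paper: both parts rest on Theorem \ref{thm:sametriangle} (a trajectory crosses each tile in at most one segment), with a boundedness/pigeonhole argument ruling out spiraling and the observation that finitely many segments in positive-area tiles cannot be dense ruling out density. Your write-up merely makes explicit the details (bounded region, finitely many tiles, nowhere-dense union of segments) that the paper's terser proof leaves implicit.
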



\begin{proof} These follow directly from Theorem \ref{thm:sametriangle}:
\begin{enumerate}
\item In the inward direction of the spiraling, once the trajectory gets to the center of the spiral, it is ``stuck,'' as it cannot cross any triangle more than once, so spiraling cannot occur.
\item Every trajectory crosses a given triangle in at most a single line segment. Since the triangles have nonzero area, trajectories are not dense in the plane.
\end{enumerate}
\end{proof}

\begin{corollary}
If a trajectory passes through two triangles that share an edge, the pieces of the trajectory in each of the two triangles are reflections of each other across the shared edge, even if the shared edge is not hit by the trajectory.
\end{corollary}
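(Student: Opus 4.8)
The plan is to reduce everything to the single-circle folding picture and to handle two cases separately, with the novelty lying entirely in the case where the shared edge $S$ is not crossed. Write $p_1$ and $p_2$ for the segments of the trajectory inside the adjacent triangles $T_1$ and $T_2$, and let $r_S$ denote reflection across the line containing the shared edge $S$. I read the conclusion as a statement about the lines carrying these segments: that $r_S$ sends the line through $p_1$ to the line through $p_2$. When the trajectory does cross $S$, this is immediate, since Lemma~\ref{folding} (the refraction rule) says that folding $T_1$ onto $T_2$ --- which is exactly the map $r_S$ --- places $r_S(p_1)$ and $p_2$ on a common line, and this is the desired reflection relation.

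For the case where the trajectory never meets $S$, the local refraction rule is unavailable, so I would invoke the global statement of Corollary~\ref{allincircle} instead. First I would fold the entire tiling into a single circle along a path of adjacent triangles that passes directly from $T_1$ to $T_2$ across $S$; call the resulting piecewise folding map $F$, so that $F$ carries every triangle into the common circle (Lemma~\ref{circumcenters}) and, by Corollary~\ref{allincircle}(2), carries every piece of the trajectory onto one fixed chord $\ell$. Because the only difference between the folding paths reaching $T_1$ and $T_2$ is the single extra crossing of $S$, the two branches are related by $F|_{T_2} = F|_{T_1}\circ r_S$. Then $F(p_1)=F|_{T_1}(p_1)$ and $F(p_2)=F|_{T_1}(r_S(p_2))$ both lie on $\ell$; applying the isometry $(F|_{T_1})^{-1}$ shows that $p_1$ and $r_S(p_2)$ lie on a single line, and applying $r_S$ gives that $r_S(p_1)$ and $p_2$ lie on a single line. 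This is exactly the assertion that the lines through $p_1$ and $p_2$ are reflections across $S$.

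The step I expect to be the main obstacle --- and the whole point of the phrase ``even if the shared edge is not hit'' --- is justifying that $r_S(p_1)$ and $p_2$ are forced onto a common line despite there being no crossing at $S$ to which Lemma~\ref{folding} could be applied. The resolution is that folding is a purely geometric operation on the tiling, independent of the trajectory, so I may fold across $S$ whether or not the trajectory uses that edge; the collinearity then comes from the \emph{global} single-chord conclusion of Corollary~\ref{allincircle}, whose consistency ultimately rests on the path-independence of the fold in the spirit of the total-turning computation behind Theorem~\ref{thm:sametriangle}. A secondary point I would make explicit is the meaning of ``reflections'': the segments $p_1$ and $p_2$ need not have equal length or matching endpoints, since each terminates on its own triangle's far edges, so the claim is properly a statement about the lines --- equivalently, the single chord $\ell$ --- that carry them.
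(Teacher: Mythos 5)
Your strategy is the same as the paper's --- pass to the folded picture, get both pieces onto one chord, and pull back --- and two of your observations are correct and worth keeping: the crossing case is immediate from Lemma~\ref{folding}, and the conclusion must indeed be read as a statement about the lines carrying the two segments, since the reflection of $T_1$ across $S$ does not even coincide with your $T_2$ as a set. The gap is exactly at the step you yourself flag as the main obstacle, and your proposed resolution does not close it. You fold the plane along a path that crosses $S$ and claim, citing Corollary~\ref{allincircle}(2), that this particular folding $F$ carries every piece of the trajectory onto one chord. But Corollary~\ref{allincircle}(2) is proven by applying Lemma~\ref{folding} fold by fold along edges the trajectory actually crosses; it therefore establishes the single-chord property only for the folding along the trajectory's own path from $T_1$ to your $T_2$, not for your $F$, which contains a fold across the uncrossed edge $S$. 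To transfer the chord property to $F$ you need precisely the consistency statement that folding is path-independent --- that folding your $T_2$ across $S$ places it in the same position in the circle as folding it along the trajectory --- and neither the statement nor the proof of Corollary~\ref{allincircle} gives this. Your remark that the ``consistency ultimately rests on the path-independence of the fold in the spirit of the total-turning computation'' names the missing fact but does not prove it, so the collinearity of $F(p_1)$ and $F(p_2)$, which is the entire content of ``even if the shared edge is not hit,'' is assumed rather than established.

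That missing step is exactly what the paper's own proof supplies by invoking Theorem~\ref{thm:sametriangle}, and your argument can be repaired by making it explicit. Consider the closed loop of triangles obtained by following the trajectory from $T_1$ to your $T_2$ and then returning across $S$. By Lemma~\ref{nx} this loop has $n_\alpha = n_\beta = n_\gamma$, so by Lemma~\ref{lem:turning} its total turning is $2n_\alpha(\alpha+\beta+\gamma) = 2\pi n_\alpha$. The discrepancy $r_S^{-1}\circ(\text{folding along the trajectory's path})$ is therefore an orientation-preserving isometry whose rotational part is a multiple of $2\pi$, i.e.\ a translation; and by Lemma~\ref{circumcenters} applied along the loop it maps the circumscribing circle of $T_2$ to itself, so it is the identity. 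Hence the folding of $T_2$ along the trajectory equals $r_S$, the chord property transfers to your $F$, and your final deductions (applying $(F|_{T_1})^{-1}$ and then $r_S$) go through verbatim. With this paragraph added your proof is complete --- and it is then essentially the paper's proof rather than an alternative to it, since this loop-holonomy computation is what the citation of Theorem~\ref{thm:sametriangle} in the paper's proof encodes.
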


See Figure \ref{nicetree} for a path exhibiting many examples of this property.

\begin{proof}
Suppose the trajectory passes through triangles $T_1, T_2, \dots , T_n$, across edges $e_1, e_2, \dots,$ $e_{n-1}$, where triangle $T_1$ is adjacent to triangle $T_n$, sharing edge $e_r$. By Theorem \ref{thm:sametriangle}, triangle $T_n$ must be in the same place relative to the trajectory from $T_1$ when folded across edges $e_1$ through $e_{n-1}$ as it is when folded across edge $e_r$. That is, the trajectory that results in triangle $T_n$ from refraction through triangles $T_2$ through $T_{n-1}$ is the same as that observed when reflecting the trajectory across edge $e_r$ from triangle $T_1$.
\end{proof}

\section{From trajectories to interval and polygon exchange transformations}\label{sec:circle}

Tiling billiards appears to be a two-dimensional system, with trajectories refracting around the plane. However, because the distance from a trajectory to the circumcenter of the triangle is an invariant of each trajectory, we can reduce tiling billiards to a one-dimensional system. This is in contrast to inner (standard) billiards, which has no such invariant, and is thus in some sense intrinsically ``harder'' than tiling billiards.

In the folded position, all of the triangles are circumscribed into one circle, with the pieces of trajectory on a single chord, so we can reduce the system to keeping track of an identified vertex of positively-oriented triangles. This allows us to transform our two-dimensional problem about trajectories on tilings into a one-dimensional problem about rotations on the circle, which we can describe using interval exchange transformations (IETs).

\subsection{Parameters in the circle, to set up the IET}\label{sec:circle_setup}

In this technical section, we derive the equations describing how the folded triangles containing a given trajectory rotate around their circumscribing circle, and how the parameters change when the tiling is folded.

\begin{definition}
\label{xtdef}
We choose a tiling triangle, circumscribed by the unit circle, containing an oriented segment of the trajectory. We extend this oriented segment to a chord of the circle.
The counterclockwise angle from the back end of the chord to the front end is denoted by $\tau$, and the counterclockwise angle from the vertex with angle $\alpha$ to the front end of the chord is denoted by $X$ (Figure \ref{xtau}). The \emph{position} of a trajectory within a given triangle is given by an ordered pair $(X, \tau)$.
\end{definition}

\begin{figure}[!h]
\includegraphics[width=0.35\textwidth]{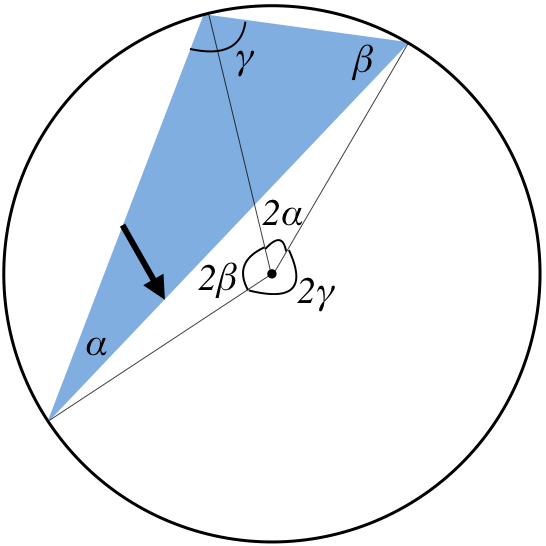} \qquad
\includegraphics[width=0.35\textwidth]{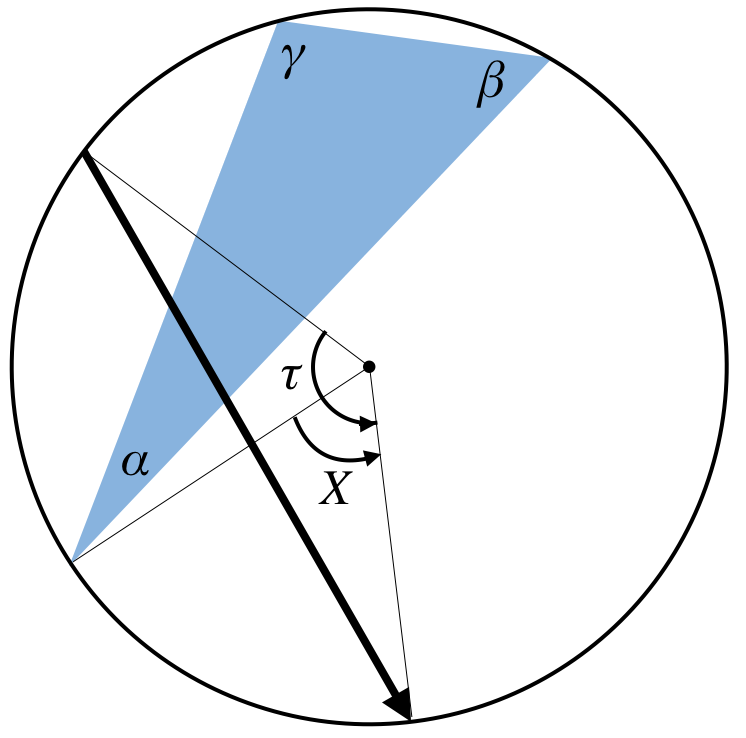}
\caption{(a) We start with a piece of trajectory in a triangle. (b) We reflect the triangle if necessary so that the angles $\alpha$, $\beta$, $\gamma$ are counter-clockwise. Then $X$ is the counter-clockwise angle from the vertex of angle $\alpha$ to the front end of the chord, and $\tau$ is the angle subtended by the trajectory (Definition \ref{xtdef}). \label{xtau}}
\end{figure}

\begin{lemma}\label{taufixed}
When a trajectory crosses an edge, $\tau$ does not change.
\end{lemma}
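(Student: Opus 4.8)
The plan is to reduce the claim to a statement about a single chord in a single circle, using the folding machinery already developed. Let $T_1$ and $T_2$ be the two triangles meeting along the crossed edge $e$, with the trajectory passing through $e$; their orientations are opposite, since orientation alternates across an edge. By Lemma \ref{circumcenters}, folding $T_1$ onto $T_2$ across $e$ (an isometric reflection $\rho$) identifies their circumscribing circles, and by Lemma \ref{folding} it carries the piece of trajectory in $T_1$ onto the line carrying the piece in $T_2$. Because the trajectory continues forward through $e$, the two pieces in fact determine one \emph{directed} chord $PQ$ of the common circle. So after folding there is genuinely a single directed chord, and the only remaining task is to compare how $\tau$ is read off for each triangle.

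Here is where I expect the one real subtlety. By Definition \ref{xtdef}, $\tau$ is a directed quantity---the counterclockwise angle from the back endpoint to the front endpoint---and computing it first requires reflecting a negatively oriented triangle into the canonical position where $\alpha,\beta,\gamma$ run counterclockwise. Since $T_1$ and $T_2$ have opposite orientation, exactly one of them is reflected in this normalization. The key observation I would establish is that this reflection is exactly matched by the orientation-reversing fold $\rho$: reading $\tau$ for $T_1$ in its own positive, canonical frame corresponds, after transporting by $\rho$ to the common circle, to reading the \emph{clockwise} angle subtended by $PQ$; and reading $\tau$ for $T_2$ forces an orientation-reversing reflection to canonical position, which likewise converts the counterclockwise reading into the clockwise angle of $PQ$. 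Thus both triangles assign to the chord the same directed angle, namely $2\pi$ minus the intrinsic counterclockwise subtended angle of $PQ$ in the common frame, so $\tau$ is unchanged.

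An equivalent and perhaps more transparent route is through Lemma \ref{circdist}: the perpendicular distance $d$ from the circumcenter to the trajectory line is an invariant of the whole trajectory, and in the unit circle the unsigned angle subtended by a chord is determined by $d$ alone, equal to $2\arccos d$. This pins down $\tau$ up to the choice between the two complementary arcs, and the directed back-to-front convention together with the canonical-position normalization selects the same arc on both sides of $e$. The main obstacle in either approach is purely the orientation bookkeeping: showing that the reflection forced by the canonical-position convention exactly cancels the orientation reversal coming from the fold (respectively, from passing to a triangle of the opposite orientation). Once that sign is pinned down, invariance of $\tau$ is immediate from the fact that both triangles see literally the same chord.
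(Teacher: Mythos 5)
Your strategy is the paper's (fold across the crossed edge and track how the orientation conventions interact), and you correctly isolate the crux: the orientation bookkeeping. But your bookkeeping contains two independent errors, and your conclusion comes out right only because they cancel. First, after folding, the two pieces of trajectory do \emph{not} determine a single directed chord $PQ$: a fold reverses the direction of traversal, so the folded piece and the unfolded piece lie on the same undirected chord but point in opposite directions --- the folded trajectory doubles back through the crossing point. (This is what the paper's proof means by the segments aligning ``with opposite orientations,'' it is the source of the intermediate quantity $2\pi-\tau$ there, and it is also precisely why the IET of Theorem \ref{billiardsiet} flips every interval --- the fold exchanges the front and back ends of the chord --- rather than being a rotation.) Second, it is false that the triangle of ``opposite orientation'' must be reflected to reach canonical position: in a triangle tiling, the two tiles sharing an edge are related by a $180^\circ$ rotation about the midpoint of that edge (the tiling is a grid of parallelograms with parallel diagonals), and rotations preserve the counterclockwise order of $\alpha,\beta,\gamma$. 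The positive/negative orientation of Definition \ref{basiclabelling} is positional ($C$ at bottom versus top), not chiral; the ``reflect if necessary'' of Definition \ref{xtdef} is needed only for \emph{folded} copies of tiles, never for tiles sitting in the tiling. So $\tau$ for $T_2$ is read directly as a counterclockwise angle, with no normalizing reflection.

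Each error by itself would give $\tau'=2\pi-\tau$; together they cancel. The correct account, which is the paper's, is that the fold reverses chirality \emph{and} reverses the chord's direction: reading $\tau$ for $T_1$ after transport by $\rho$ equals the clockwise angle from the folded back end to the folded front end, which is the clockwise angle from $T_2$'s front end to $T_2$'s back end, i.e.\ the counterclockwise angle from $T_2$'s back to its front --- exactly $\tau$ for $T_2$. (The paper phrases this as: the folded chord reads as $2\pi-\tau$, and un-reflecting the folded triangle to counterclockwise position restores $\tau$.) Your alternative route through Lemma \ref{circdist} does not rescue the argument: it only pins $\tau$ down to the pair $\{\theta_0,\,2\pi-\theta_0\}$, and selecting the correct element of that pair is the entire content of the lemma, which you then address with the same flawed bookkeeping. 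To repair the proof, replace the single-directed-chord claim by the statement that the fold exchanges the front and back ends of the chord, and drop the claimed normalization of $T_2$.
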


\begin{proof}
Consider two consecutive triangles passed through by the trajectory, and let $\tau$ be the counterclockwise angle subtended by the trajectory's chord in the first triangle. When the second triangle is folded onto the first along the shared edge, the segments of trajectory in each triangle align, with opposite orientations (Corollary \ref{allincircle}). In this configuration, the counterclockwise angle subtended by the chord of the trajectory in the second triangle is now $2\pi-\tau$, as it is the same chord as before, only flipped. After flipping the second triangle back to its correct orientation, the subtended angle is $\tau$ (see Figure \ref{singlehit}).
\end{proof}

\begin{figure}
\begin{center}
\includegraphics[width=0.31\textwidth]{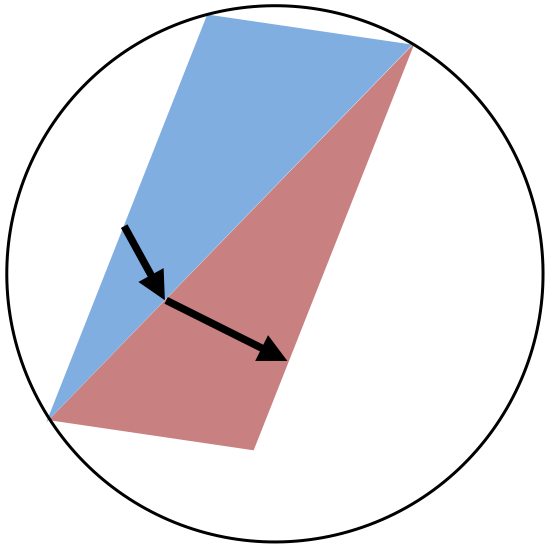} \ \
\includegraphics[width=0.31\textwidth]{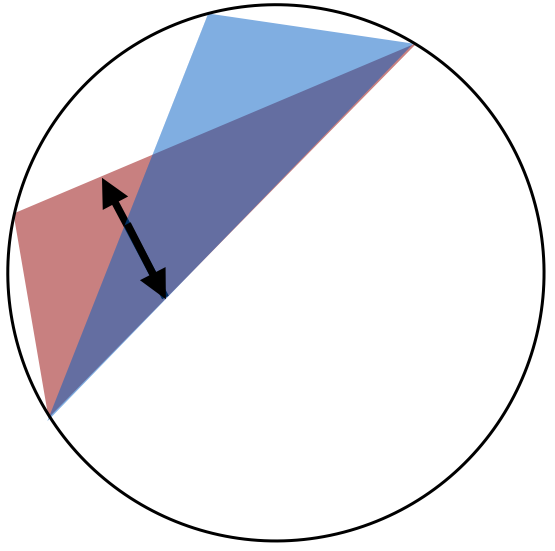} \ \
\includegraphics[width=0.31\textwidth]{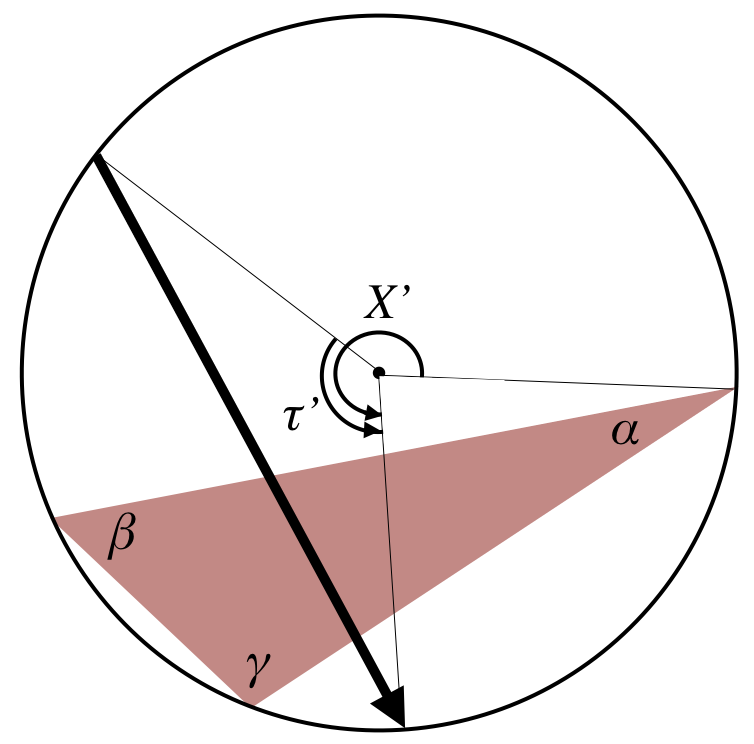}
\caption{This is a continuation of Figure \ref{xtau}. (a) The trajectory passes from the original (blue) triangle into the next (red) triangle. (b) We fold the new triangle onto the original triangle. (c) Then we reflect the new triangle as necessary so that $\alpha$, $\beta$ and $\gamma$ are in the counter-clockwise order. We see that $\tau'=\tau$, and that $X'$ is different from $X$.
\label{singlehit}}
\end{center}
\end{figure}

\subsection{From the circle to an interval exchange transformation}\label{sec:iet}

By keeping track of the location of an identified vertex of positively-oriented triangles rotating around the circumscribing circle, the dynamics of a trajectory on a triangle tiling can be described by an \emph{interval exchange transformation} (IET). 

First, we will describe how to construct an IET that contains all the information of a given trajectory, using the circumscribing circle from $\S \ref{sec:circle_setup}$.

For a given trajectory described by $(X,\tau)$ on a triangle tiling with angles $\alpha,\beta,\gamma$, there is an associated IET:
\begin{itemize}
\item The \emph{lengths of intervals} are determined by $\alpha,\beta,\gamma$ and $\tau$.
\item The \emph{shift transformations} are determined by $\alpha,\beta,\gamma$ and $\tau$.
\item The \emph{starting point} on the IET is determined by $X$.
\end{itemize}

\begin{theorem}[Tiling Billiards IET]\label{billiardsiet}
Given a triangle tiling with angles $\alpha$, $\beta$, $\gamma$, and a trajectory with associated parameters $(\tau,X)$, passing to the next triangle transforms $X$ according to the following 3-IET:

The interval $(0,2\gamma)$ maps to $(\tau,\tau+2\gamma)$, with the opposite orientation. The interval $(2\gamma,2\gamma+2\alpha)$ maps to $(\tau+2\gamma,\tau+2\gamma+2\alpha)$, with the opposite orientation. The interval $(2\gamma+2\alpha,2\pi)$ maps to $(\tau+2\gamma+2\alpha,\tau+2\pi)$, with with the opposite orientation. All of these transformations are taken modulo $2\pi$.
\end{theorem}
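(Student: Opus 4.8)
The plan is to fix the common circumscribing circle from Corollary~\ref{allincircle} and to reduce the refraction to the motion of the single marked vertex $V_\alpha$ (the vertex of angle $\alpha$) around this circle, reading $X$ as the counterclockwise angle from $V_\alpha$ to the front end of the fixed trajectory chord. First I would pin down the domain partition using the inscribed-angle theorem: the three vertices cut the circle into arcs of central angle $2\gamma, 2\alpha, 2\beta$ read counterclockwise from $V_\alpha$ (the arc subtended by the side opposite a vertex of inscribed angle $\psi$ has central angle $2\psi$), and the forward ray exits the current triangle through exactly the edge whose subtending arc contains the front end of the chord. Hence $X\in(0,2\gamma)$, $X\in(2\gamma,2\gamma+2\alpha)$, and $X\in(2\gamma+2\alpha,2\pi)$ correspond to exiting through edges $C$, $A$, and $B$, which is precisely the interval decomposition in the statement.

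Next I would fold the next triangle into the circle across the chosen exit edge. By Lemma~\ref{folding} its piece of trajectory lands on the same chord but with its forward direction reversed, so its front end is now the old back end, sitting at counterclockwise angle $\tau$ behind the old front end; this is the source of the shift by $\tau$, and is consistent with the intermediate subtended angle $2\pi-\tau$ in the proof of Lemma~\ref{taufixed}. The key geometric claim I would establish is that the folded next triangle is exactly the reflection of the current triangle across the diameter perpendicular to the exit edge. Indeed the folded triangle is inscribed in the circle, shares the exit edge with the same side-label (as in Lemma~\ref{circumcenters}), and has reversed chirality, so it must be that reflection, whose axis passes through the circumcenter and is therefore a diameter. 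This reflection keeps the triangle inscribed, flips its orientation, and sends $V_\alpha$ from its old angular position $\theta$ to a position I can compute.

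Finally I would compute $X'$. Because the folded triangle has reversed chirality, the convention of Definition~\ref{xtdef} re-orients it by a further reflection so that $\alpha,\beta,\gamma$ run counterclockwise; rather than track that reflection, I would use that a reflection interchanges clockwise and counterclockwise angles, so $X'$ equals the \emph{clockwise} angle from $V_\alpha$ to the front end measured already in the folded triangle. Writing this as (position of $V_\alpha$) $-$ (position of front end) with front end $=F-\tau$ and $F=X+\theta$ collapses everything to $X'=\tau+\Delta-X$, where $\Delta$ is the displacement of $V_\alpha$ under the reflection of the previous step. A one-line inscribed-angle computation gives $\Delta=2\gamma$, $4\gamma+2\alpha$, and $2\gamma+2\alpha$ (mod $2\pi$) for exit edges $C$, $A$, $B$. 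In each case $X\mapsto\tau+\Delta-X$ is an orientation-reversing isometry, and one checks it carries $(0,2\gamma)$, $(2\gamma,2\gamma+2\alpha)$, $(2\gamma+2\alpha,2\pi)$ onto $(\tau,\tau+2\gamma)$, $(\tau+2\gamma,\tau+2\gamma+2\alpha)$, $(\tau+2\gamma+2\alpha,\tau+2\pi)$ respectively, which is the asserted IET.

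The main obstacle is the reflection bookkeeping: proving cleanly that the folded triangle is the reflection across the perpendicular bisector of the exit edge, and arranging the re-orientation step so the final formula does not depend on the arbitrary re-orienting axis, which is exactly what the clockwise/counterclockwise angle swap accomplishes. Once the circle is fixed and these reflections are understood, the three structural features of the IET---flipping each interval, shifting by $\tau$, and the interval lengths $2\gamma,2\alpha,2\beta$---are forced, and the only delicate point is the modular arithmetic in evaluating $\Delta$.
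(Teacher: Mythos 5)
Your proposal is correct and takes essentially the same approach as the paper: the inscribed-angle partition of the circle into arcs of measure $2\gamma,2\alpha,2\beta$ determines which edge is crossed, and the fold across that edge is the reflection in its perpendicular bisector (a diameter), which flips each interval and, combined with the shift by $\tau$ carrying the chord's back end to its front end, produces the stated 3-IET. Your explicit displacement values $\Delta=2\gamma$, $4\gamma+2\alpha$, $2\gamma+2\alpha$ agree modulo $2\pi$ with the formulas recorded in Remark~\ref{rem:inequalities}, so the bookkeeping checks out.
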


See Figure \ref{3ietfigure} for a picture of one such IET.

\begin{figure}[!h]
\begin{center}
\includegraphics[width=0.7\textwidth]{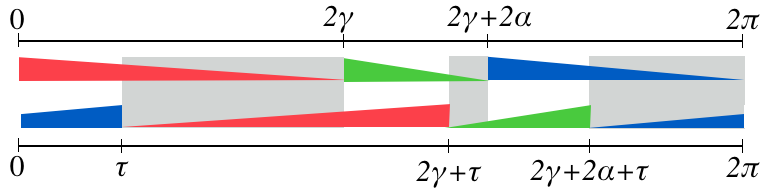}
\caption{The Tiling Billiards IET specified by a choice of $\alpha$, $\beta$, and $\tau$. The three subintervals are each flipped (orientation is indicated by the triangular shape of the intervals), and the entire interval is shifted to the right by $\tau$ modulo $2\pi$. The shaded regions indicate where the IET would say that the same side is hit twice in a row, which is impossible. In fact, these are not in the domain of the trajectory system, because they correspond to chords that are disjoint from the triangle. \label{3ietfigure}}
\end{center}
\end{figure}

\begin{proof}
First we will determine which side a trajectory crosses, depending on $X$. Starting at the vertex with angle $\alpha$ and moving counterclockwise around the triangle, the order of the sides is $C$, then $A$, and then $B$. From $0$ to $2\gamma$, this angle keeps the trajectory within side $C$. From $2\gamma$ to $2\gamma+2\alpha$, it is passing through side $A$. And from $2\gamma+2\alpha$ to $2\pi$, side $B$ is hit.

For the change in $X$, we first note that when the next triangle $T_2$ is folded onto the current triangle $T_1$, since the triangles share a perpendicular bisector and are folded along a corresponding edge, they are reflections of one another across that bisector. So, the back end of the oriented chord in $T_2$ is a reflection across this bisector from where the front end of the chord in $T_1$ is.
In this way, flipping the intervals sends the original front end of a chord to the back end of the chord in the next triangle. Adding the distance $\tau$ between the back end and the front end completes the relationship between the front ends of the segments in each triangle.
\end{proof}

\begin{remark} \label{rem:inequalities}
Explicitly, for a trajectory with parameters $(X,\tau)$, passing to the next triangle gives new parameters $(X',\tau')$, where $\tau'=\tau$, and

$\begin{displaystyle}
X' = 
\begin{cases}
\tau+2\gamma-X, & \text{if the side crossed is } C \ (0<X<2\gamma); \\
\tau-2\beta+2\gamma-X, & \text{if the side crossed is } A \ (2\gamma<X<2\gamma+2\alpha); \\
\tau-2\beta-X, & \text{if the side crossed is } B \ (2\gamma+2\alpha<X<2\pi). \\
\end{cases}
\end{displaystyle}$
\end{remark}

\subsection{From IETs to a polygon exchange transformation}\label{sec:pet}

The $3$-interval exchange just described is orientation reversing on each interval. For our purposes, it is easier to work with the orientation-preserving \emph{square} of the IET, which has between 3 and 6 intervals.

It turns out that it is natural to use a polygon exchange transformation (PET) for each triangle tiling, consisting of a stack of the squared, orientation-preserving IETs corresponding to all possible trajectories on the tiling. Each triangle tiling has an associated PET, which is a square ($2\pi\times 2\pi$) region cut up into triangles and parallelograms. Each point in the PET corresponds to a pair $(X,\tau)$, a particular starting point on a particular trajectory on the tiling. The exchange transformations are horizontal translations of the parallelogram regions. See Figure \ref{PETcolor} for an example of one such PET.

\begin{itemize}
\item The chord angle $\tau$ is measured along the vertical axis. For a given trajectory, $\tau$ is fixed (Lemma \ref{taufixed}), so a horizontal slice of the PET corresponds to a single trajectory's Tiling Billiards IET (Theorem \ref{billiardsiet}).
\item The triangle position $X$ is measured along the horizontal axis.
\item The shapes of the triangles and parallelograms that make up the pieces of the PET are determined by the angles $\alpha,\beta,\gamma$ of the triangle tiling.
\item The triangular regions at the top and bottom of the PET correspond to disallowed trajectories; they are not physically possible.
\item The shift transformations of the parallelograms are all horizontal, because the PET is constructed from stacks of horizontal IETs with shift transformations.
\end{itemize}

\begin{lemma}\label{redzone}
The following tuples $(X,\tau)$ are impossible:
\begin{enumerate}
\item $0<X < 2\gamma, \tau<X$
\item $2\gamma<X<2\gamma+2\alpha, \tau < X-2\gamma$
\item $2\gamma+2\alpha < X < 2\pi, \tau < X-(2\gamma+2\alpha)$
\item $0<X < 2\gamma, X < \tau - (2\alpha+2\beta)$
\item $2\gamma<X<2\gamma+2\alpha, X < \tau - 2\beta$
\item $2\gamma+2\alpha < X < 2\pi, X < \tau$
\end{enumerate}
\end{lemma}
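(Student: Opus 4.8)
The plan is to read each of the six conditions as a statement about the two endpoints of the chord on the circumscribing circle, and to show that together they describe exactly the chords that are \emph{disjoint} from the triangle. First I would pin down the vertex positions on the unit circle. By the inscribed-angle theorem the side opposite an angle subtends an arc of twice that angle, so, placing the $\alpha$-vertex at circle-angle $0$ and proceeding counterclockwise exactly as in the proof of Theorem~\ref{billiardsiet}, the $\beta$-vertex sits at $2\gamma$ and the $\gamma$-vertex at $2\gamma+2\alpha$. These three vertices cut the circle into three open arcs: the $C$-arc $(0,2\gamma)$, the $A$-arc $(2\gamma,2\gamma+2\alpha)$, and the $B$-arc $(2\gamma+2\alpha,2\pi)$. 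By Definition~\ref{xtdef} the front end of the chord is at circle-angle $X$ and the back end at $X-\tau \pmod{2\pi}$.

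The geometric heart of the argument is the following criterion: the chord meets the interior of the triangle if and only if its two endpoints lie on two \emph{different} arcs. Indeed, if both endpoints lie strictly inside one arc, the whole chord lies in the circular segment cut off by the corresponding side, which is disjoint from the convex triangle; conversely, if the endpoints lie on two distinct arcs, then each of the two arcs into which the chord divides the circle contains at least one triangle vertex, so the chord separates the vertices and therefore passes through the interior. A pair $(X,\tau)$ is physically impossible precisely when no piece of trajectory can lie on the chord inside the triangle, i.e. when the chord misses the triangle's interior (cf. the caption of Figure~\ref{3ietfigure}); hence impossibility is equivalent to both endpoints lying in the same open arc.

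It then remains to translate ``both endpoints in the same open arc'' into the inequalities (1)--(6). I would fix the arc containing the front end $X$ — this is exactly the trichotomy $0<X<2\gamma$, $2\gamma<X<2\gamma+2\alpha$, $2\gamma+2\alpha<X<2\pi$ — and ask when the back end $X-\tau\pmod{2\pi}$ lands in that same arc. There are two ways this can happen, according to whether $\tau<X$ (no wrap-around, so the back end at $X-\tau$ lies clockwise of the front end) or $\tau>X$ (wrap-around, so the back end at $X-\tau+2\pi$ lies counterclockwise of it). The no-wrap case in each arc gives conditions (1), (2), (3), with bound $\tau<X-(\text{left vertex angle of the arc})$; the wrap-around case gives (4), (5), (6), where, using $2\alpha+2\beta+2\gamma=2\pi$, the bound takes the form $X<\tau-(\text{arc constant})$. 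In every case exactly one of the two arc-membership inequalities for the back end is automatically satisfied (from $0<\tau<2\pi$ together with the stated range of $X$), which is why each condition records only a single inequality on $\tau$.

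The step I expect to be the main obstacle is the geometric criterion of the second paragraph: giving a clean justification that endpoints on distinct arcs force the chord through the interior while endpoints on a common arc confine it to a circular segment. The remaining work is careful bookkeeping of the modular arithmetic, whose only subtlety is verifying that the redundant arc-membership inequality in each case is genuinely implied by $0<\tau<2\pi$ and the range of $X$.
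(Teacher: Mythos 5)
Your proposal is correct and follows essentially the same route as the paper: the paper's proof likewise identifies cases (1)--(3) with chords lying in the circular segment cut off by the arc spanned by side $C$, $A$, or $B$ (triangle on the left) and cases (4)--(6) with the mirror situation (triangle on the right), i.e.\ with both endpoints of the chord in a single arc so that the chord misses the triangle. Your write-up merely makes explicit what the paper leaves to ``drawing a picture'': the convexity argument for why a chord of a single arc stays in the segment, and the modular bookkeeping showing the inequalities encode exactly the same-arc condition (your no-wrap/wrap split corresponding to the paper's left/right split).
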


\begin{proof}
Each of these follows from drawing a picture like Figure \ref{xtau}, and noticing that the chord does not intersect the triangle. Recall from basic geometry that the central angles of the arcs subtended by inscribed angles $\alpha, \beta$ and $\gamma$ are $2\alpha, 2\beta$ and $2\gamma$, respectively.

Cases (1), (2) and (3) occur when the (oriented)  chord of the trajectory lies outside the triangle, keeping the triangle on its left, in the arc spanned by sides $C$, $A$ or $B$, respectively.
Cases (4), (5) and (6) are the same with the triangle on the right.
\end{proof}

\begin{theorem}[Tiling Billiards PET]\label{thm:pet}
Take a triangle tiling with angles $\alpha$, $\beta$, and $\gamma$, and a trajectory with associated parameters $(X,\tau)$. When the trajectory reaches the next triangle of the same orientation, $(X,\tau)$ transforms according to the following PET:

\begin{align*}
  X'' &=
  \begin{cases}
    X+2\alpha, & \text{if\ } 0<X<2\gamma \text{\ and\ } X+2\alpha<\tau<X+2\alpha+2\beta \ (\text{side\ }C \text{,\ then\ } B) \\
    X-2\beta, & \text{if\ } 0<X<2\gamma \text{\ and\ } X<\tau<X+2\alpha \ (\text{side\ }C \text{,\ then\ } A) \\
    X+2\beta, & \text{if\ } 2\gamma<X<2\gamma+2\alpha \text{\ and\ } X-2\gamma+2\beta<\tau<X+2\beta \ (\text{side\ }A \text{,\ then\ } C) \\
    X-2\gamma, & \text{if\ } 2\gamma<X<2\gamma+2\alpha \text{\ and\ } X-2\gamma<\tau<X-2\gamma+2\beta \ (\text{side\ }A \text{,\ then\ } B) \\
    X+2\gamma, & \text{if\ } 2\gamma+2\alpha<X<2\pi \text{\ and\ } X-2\alpha<\tau<X \ (\text{side\ }B \text{,\ then\ } A) \\
    X-2\alpha, & \text{if\ } 2\gamma+2\alpha<X<2\pi \text{\ and\ } X-2\alpha-2\gamma<\tau<X-2\alpha \ (\text{side\ }B \text{,\ then\ } C)
  \end{cases}\\
\tau''&=\tau
\end{align*}

By Lemma \ref{redzone}, values of $(X,\tau)$ outside of these regions do not correspond to tiling billiards trajectories, so we take them as fixed points of the PET.
\end{theorem}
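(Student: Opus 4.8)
The plan is to obtain the PET as the second iterate (the ``square'') of the single-step Tiling Billiards IET of Theorem~\ref{billiardsiet}. Since a move to the next triangle of the \emph{same} orientation crosses exactly two edges, $X''$ is computed by applying the single-step map twice: first $X \mapsto X'$ using the side determined by the interval containing $X$, and then $X' \mapsto X''$ using the side determined by the interval containing $X'$. Because $\tau' = \tau$ at each single step (Lemma~\ref{taufixed}), the invariance $\tau'' = \tau$ is immediate, and all of the content lies in the formula for $X''$.

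First I would record the single-step map from Remark~\ref{rem:inequalities}: for $0<X<2\gamma$ (side $C$), $X' = \tau + 2\gamma - X$; for $2\gamma<X<2\gamma+2\alpha$ (side $A$), $X' = \tau - 2\beta + 2\gamma - X$; and for $2\gamma+2\alpha<X<2\pi$ (side $B$), $X' = \tau - 2\beta - X$, all modulo $2\pi$. Fixing the first side fixes the interval of $X$, and the value of $\tau$ then determines which of the three subintervals contains $X'$, hence which side is crossed second. For instance, with first side $C$ the requirement $2\gamma<X'<2\gamma+2\alpha$ (second side $A$) becomes $X<\tau<X+2\alpha$, and substituting into the $A$-branch gives
\begin{align*}
X'' = \tau - 2\beta + 2\gamma - X' = \tau - 2\beta + 2\gamma - (\tau + 2\gamma - X) = X - 2\beta,
\end{align*}
matching the ``side $C$, then $A$'' case. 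Likewise, requiring $2\gamma+2\alpha<X'<2\pi$ (second side $B$) gives $X+2\alpha<\tau<X+2\alpha+2\beta$ and $X'' = \tau - 2\beta - X' = X - 2\beta - 2\gamma \equiv X + 2\alpha \pmod{2\pi}$, using $\alpha+\beta+\gamma=\pi$, which is the ``side $C$, then $B$'' case.

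I would then carry out the same two-step composition for first side $A$ and first side $B$, each producing two allowed cases, for six in total. The remaining combinations are exactly those in which the second crossing would repeat the first side; for first side $C$ this requires $0<X'<2\gamma$, which (taking the modulo-$2\pi$ reduction into account) forces $(X,\tau)$ into the impossible regions of Lemma~\ref{redzone}, and analogously for $A$ and $B$. Thus the six listed regions together with the regions of Lemma~\ref{redzone} exhaust the $(X,\tau)$ square, and outside them the point is declared fixed.

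The main obstacle is not conceptual but bookkeeping: one must carry the modulo-$2\pi$ reductions correctly and repeatedly invoke $2\alpha+2\beta+2\gamma = 2\pi$ to recognize identities such as $X-2\beta-2\gamma \equiv X+2\alpha$, and one must verify that each case's defining inequalities on $\tau$ --- arising solely from demanding that $X'$ land in the correct subinterval --- coincide with those stated in the theorem. Care is also needed to confirm that the six allowed regions and the six impossible regions of Lemma~\ref{redzone} tile the square without gaps or overlaps, which is precisely what justifies treating all other points as fixed points of the PET.
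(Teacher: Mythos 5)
Your proposal is correct and follows essentially the same route as the paper: compose the single-step IET of Theorem~\ref{billiardsiet} (via Remark~\ref{rem:inequalities}) with itself, use the interval containing $X'$ to determine the second side and hence the inequalities on $\tau$, simplify with $2\alpha+2\beta+2\gamma=2\pi$, and dispose of the repeated-side combinations via Lemma~\ref{redzone}. The only difference is one of presentation --- the paper works out a single representative case (side $C$ then $B$) and leaves the rest implicit, while you work two cases and sketch the exhaustiveness check explicitly.
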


The polygon exchange transformation is shown in Figure \ref{PETcolor}.

\begin{figure}[!h]
\begin{center}
\includegraphics[width=0.45\textwidth]{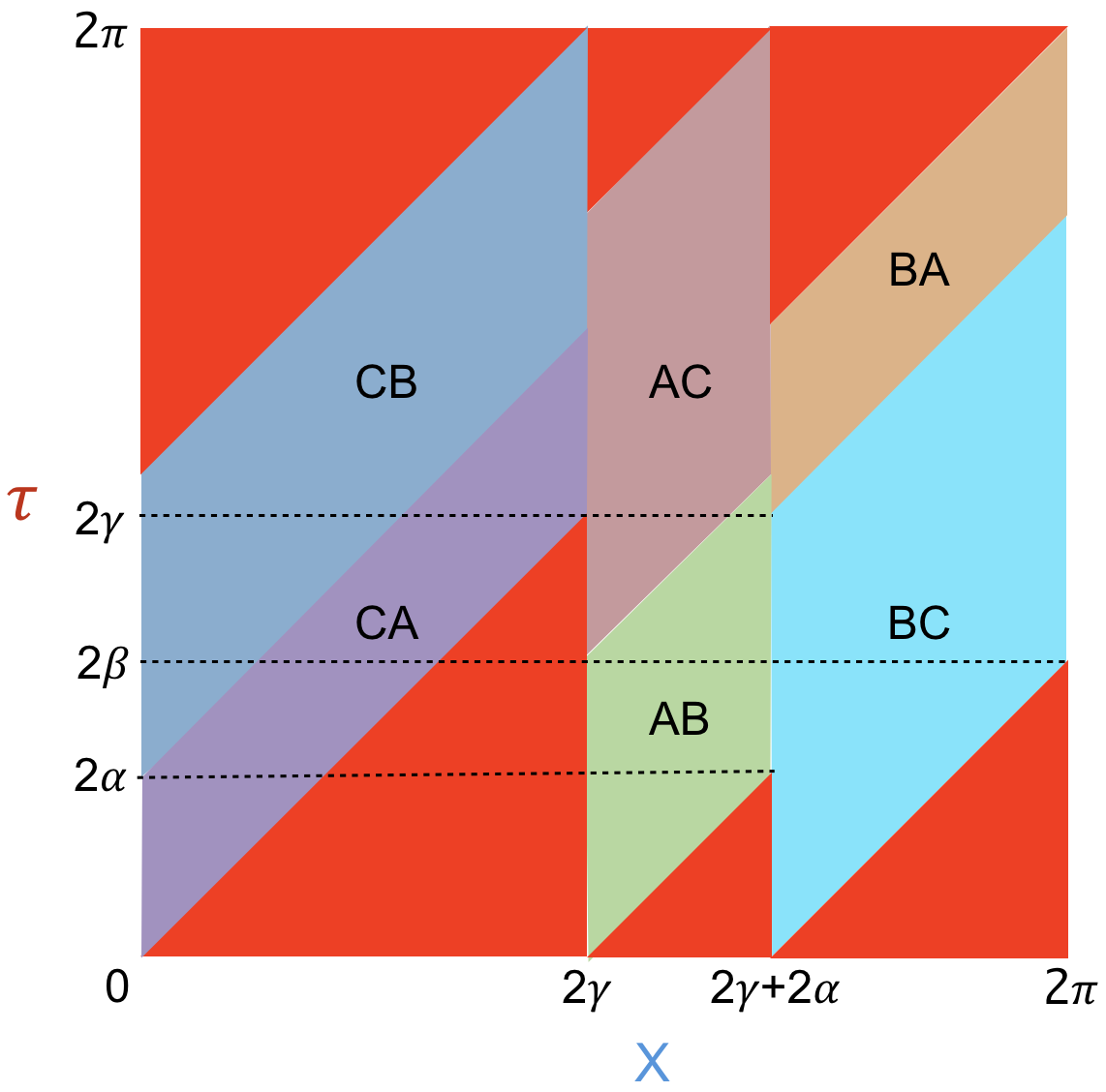} \hspace{0.03\textwidth}
\includegraphics[width=0.45\textwidth]{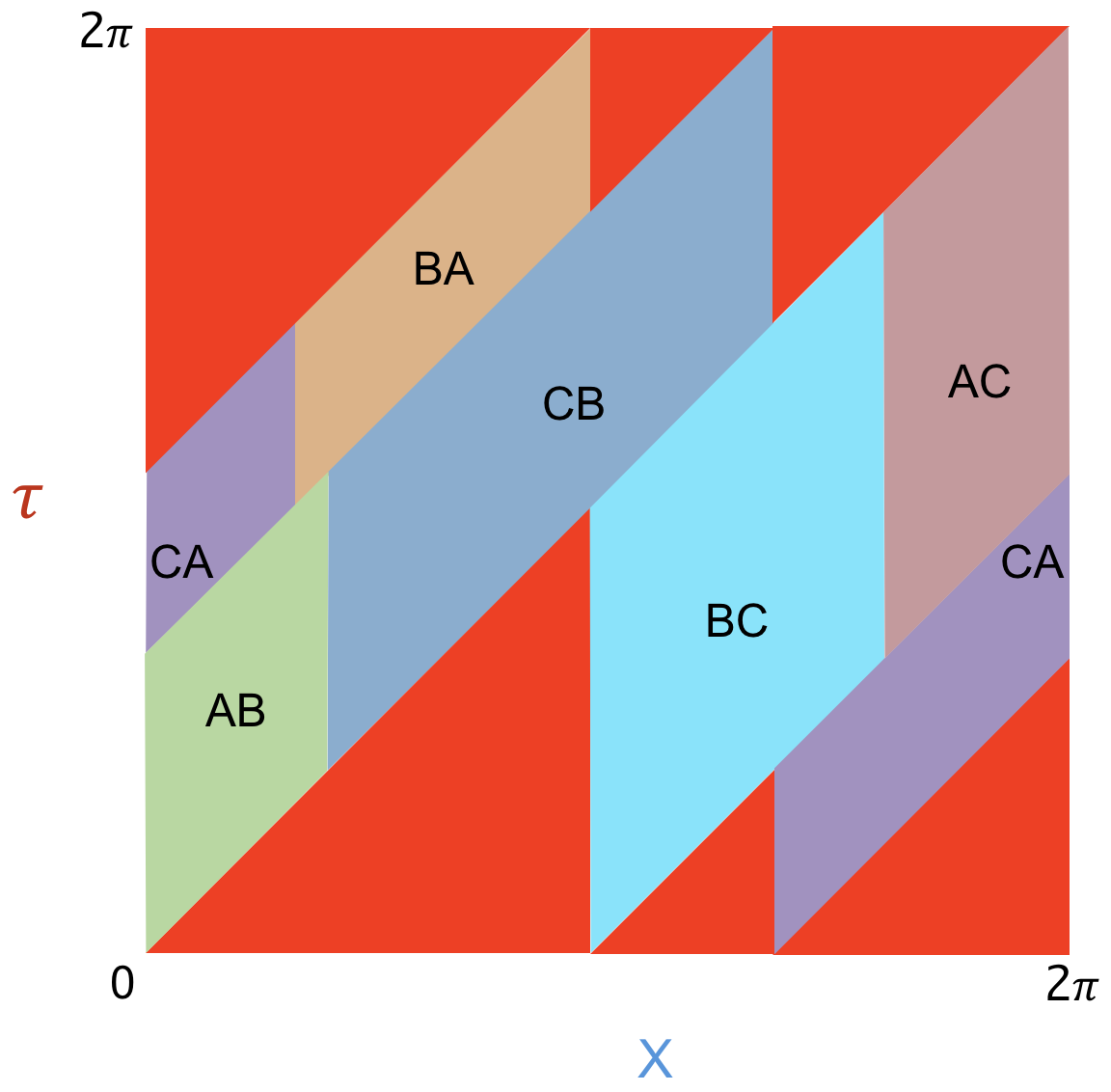}
\caption{The division of the $X,\tau$ plane into the polygons of the PET. The image on the left shows where the pieces start, and the image on the right shows where they go after one transformation. The red triangular regions represent disallowed trajectories, which we take to be fixed points. The label gives the next two edges that a trajectory corresponding to a point in that region will hit. \label{PETcolor}}
\end{center}
\end{figure}

\begin{proof}
This follows directly from the Tiling Billiards IET Theorem \ref{billiardsiet}. As an example, suppose that $0<X<2\gamma$ and $X+2\alpha<\tau<X+2\alpha+2\beta$. Then the theorem tells us that the first side hit is $C$. Applying the formula from Remark \ref{rem:inequalities}, we get \mbox{$X'=\tau+2\gamma-X$}. Combining this with the second inequality, we find $$(\tau+2\gamma-X')+2\alpha<\tau<(\tau+2\gamma-X')+2\alpha+2\beta.$$ This simplifies to \mbox{$2\gamma+2\alpha<X'<2\pi$}, which implies that the second side hit is $B$.

Since the sides crossed are $C$ and then $B$, we have 
\begin{align*}
X' &= \tau + 2\gamma -X; \\
X'' &= \tau-2\beta-(\tau+2\gamma-X)\\
&= X+2\alpha,
\end{align*}
with angles measured modulo $2\pi$.
\end{proof}

\begin{example} The PET corresponding to the equilateral triangle tiling is shown on the left side of in Figure \ref{pet-equil}a. A value of $\tau=5\pi/3$ slices the PET near the top, giving a $3$-interval IET corresponding to the blue, pink and brown regions, which are cyclically permuted in the exchange (top right in Figure \ref{pet-equil}). The red regions are fixed points. Every non-fixed point in this IET crosses the sequence $\overline{CBACBA}$ of edges, and corresponds to a trajectory circling a single vertex. Similarly, a value of $\tau = \pi/3$ slices the PET near the bottom, permuting the purple, green and cyan regions (bottom right in Figure \ref{pet-equil}). Non-fixed points in this IET correspond to the same trajectories as above, traversed in the opposite direction $\overline{ABCABC}$.

A slice through the middle of the IET, with $2\pi/3 < \tau < 4\pi/3$, yields a $6$-interval IET that is reducible into the two $3$-interval IETs described above (middle right in Figure \ref{pet-equil}b).
%
\end{example}

\begin{figure}[!h]
\begin{center}
\includegraphics[width=0.85\textwidth]{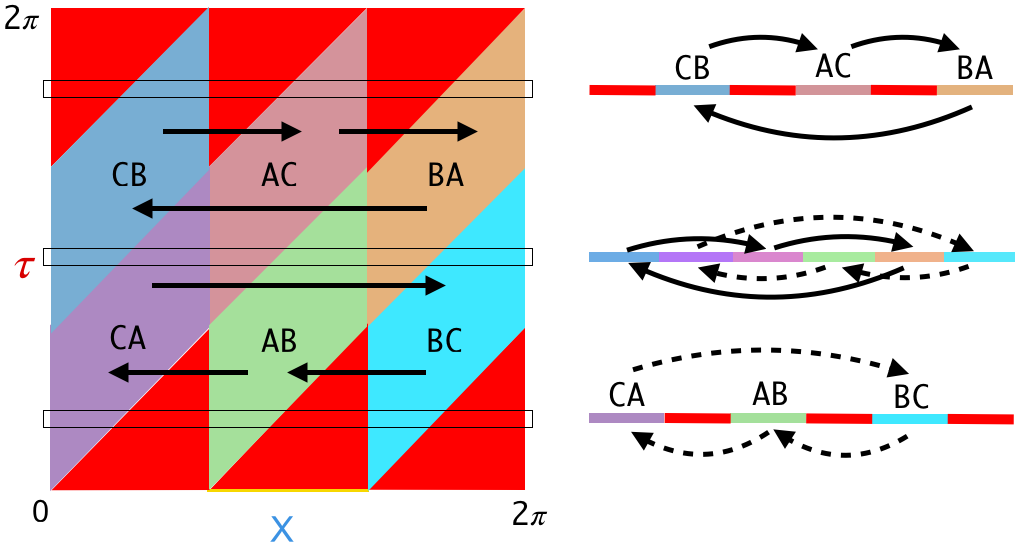}
\caption{(a) The PET for the equilateral triangle tiling. (b) Horizontal slices of the PET, giving the associated IETs. The top slice at $\tau=5\pi/3$ corresponds to a counter-clockwise trajectory around a single vertex. The bottom slice at $\tau=\pi/3$ corresponds to a clockwise trajectory around a single vertex. The middle slice is at $\tau=\pi$; in this case the direction of travel around the vertex depends on the starting direction $X$ of the trajectory. \label{pet-equil}}
\end{center}
\end{figure}

\begin{remark}
Sliding up and down the PET to choose different IETs (without hitting singularities) is the same as performing a rel deformation of the IET, and of the translation surface associated to the suspension of the IET; see Figure \ref{fig:suspension}.
\end{remark}

\begin{figure}[!h]
\begin{center}
\includegraphics[width=0.9\textwidth]{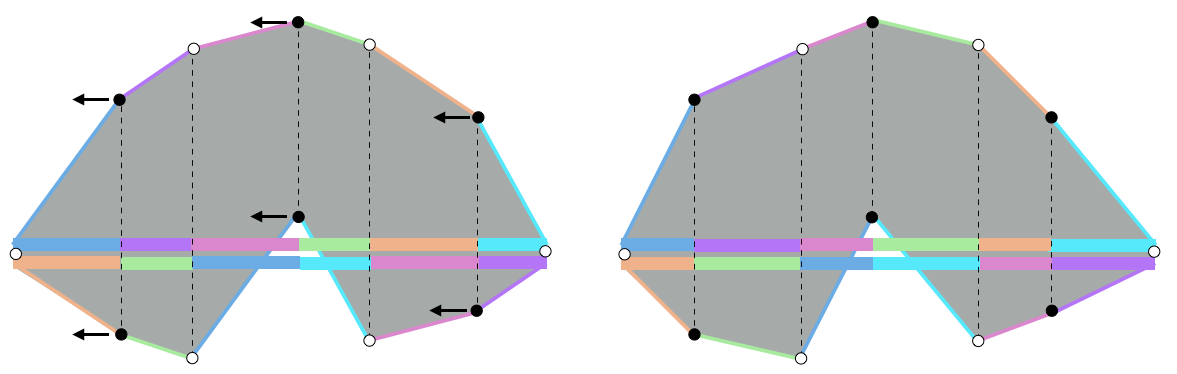}
\caption{Two IETs corresponding to slices of the PET for the equilateral triangle tiling, and their suspensions to translation surfaces, for (left) \mbox{$\pi<\tau<4\pi/3$} and (right) \mbox{$2\pi/3<\tau<\pi$}. The two IETs, and the two corresponding surfaces, are rel deformations of each other. \label{fig:suspension}}
\end{center}
\end{figure}

\begin{remark} \label{periodic-iet}
Periodic behavior of the IET corresponds to both periodic and drift-periodic trajectories in the tiling billiards system. This is because the IET records the position and direction of a trajectory in a triangle, but does not record the triangle's location in the tiling.
\end{remark}

\section{Results about tiling billiards from the IET and PET}

We now use the interval exchange transformation and polygon exchange transformation mechanisms developed in the previous section, to prove things about behavior of trajectories on the tilings.

\subsection{Results from the IET}

\begin{definition}\label{def:stable}
We call a trajectory on a triangle tiling \emph{stable} if there are neighborhoods of the given values for $\alpha,\beta,\gamma,\tau,$ and $X$ for which the sequence of edges crossed is unchanged.
\end{definition}

\begin{theorem}
All periodic trajectories are stable under small perturbations of the triangle angles $\alpha,\beta,\gamma$ and of the trajectory position parameters $X,\tau$.
\end{theorem}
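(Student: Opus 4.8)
The plan is to argue entirely in the one-dimensional IET picture of Theorem \ref{billiardsiet}, combining a soft continuity argument that controls finitely many steps of the orbit with a rigidity fact that forces the return map to be the identity for \emph{all} nearby parameters. Write $p_0=(\alpha_0,\beta_0,\gamma_0,\tau_0,X_0)$ for the given parameters (with $\alpha_0+\beta_0+\gamma_0=\pi$), and let $T_p$ denote the tiling billiards IET for a parameter point $p$. By the explicit formulas in Remark \ref{rem:inequalities}, $T_p$ is a piecewise isometry: on each of the three intervals it acts by an orientation-reversing map $X\mapsto c-X\pmod{2\pi}$, where $c$ and the interval endpoints $2\gamma$ and $2\gamma+2\alpha$ depend affinely and continuously on $p$. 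Since the trajectory is periodic, its orbit $X_0,X_1=T_{p_0}(X_0),\dots,X_{n-1}$ is finite with $T_{p_0}^{\,n}(X_0)=X_0$; let $w=(e_0,\dots,e_{n-1})$ be the sequence of edges crossed. As a genuine (non-drift) periodic trajectory it returns to its starting triangle, so Lemma \ref{nx} gives $n_\alpha=n_\beta=n_\gamma=:k$ for $w$. Because the orbit avoids every vertex, each $X_i$ is strictly interior to its interval; let $\delta>0$ be the minimal distance from the orbit to the finitely many endpoints.

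First I would run a finite induction. On the region of parameter-position space where the first $i$ edges crossed are $e_0,\dots,e_{i-1}$, the iterate $g_i(p):=T_p^{\,i}(X)$ is a composition of the affine formulas above, hence continuous (indeed a circle isometry in $X$, and Lipschitz in the angle parameters with a constant depending only on $i$). Since $g_i(p_0)=X_i$ lies at distance $\ge\delta$ from the endpoints, and the endpoints themselves move continuously with $p$, I can shrink a neighborhood of $p_0$ in $n$ steps, $U_0\supseteq U_1\supseteq\cdots\supseteq U_{n-1}=:U$, so that for every $p\in U$ each $g_i(p)$ remains in the same interval as $X_i$. Thus on $U$ the first $n$ edges crossed are exactly $w$, and by the strict margins in Lemma \ref{redzone} these crossings stay physically realized.

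The crux is to upgrade ``the first $n$ edges agree'' to ``the full edge sequence agrees,'' i.e. that the perturbed trajectory is again periodic with code $w$. Here I would use that on $U$ the map $T_p^{\,n}$ is a composition of $n$ orientation-reversing isometries with a \emph{fixed} combinatorial pattern, hence an honest rotation of the circle. By Lemma \ref{lem:turning} its rotation angle equals the total turning $2(\alpha n_\alpha+\beta n_\beta+\gamma n_\gamma)=2k(\alpha+\beta+\gamma)=2k\pi$, using $n_\alpha=n_\beta=n_\gamma=k$ together with $\alpha+\beta+\gamma=\pi$. This is $\equiv 0\pmod{2\pi}$ for \emph{every} $p\in U$, so $T_p^{\,n}=\mathrm{id}$ identically on $U$. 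Consequently every $p\in U$ yields a trajectory whose orbit closes up after exactly the code $w$, and since $n_\alpha=n_\beta=n_\gamma$ is unchanged, Lemma \ref{nx} shows it returns to the same triangle; hence it is genuinely periodic with the same edge sequence, which is precisely stability in the sense of Definition \ref{def:stable}.

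I expect the main obstacle to be exactly this last step. A naive continuity argument controls only finitely many iterates, and a generic perturbation of an interval exchange produces slow drift that eventually changes the itinerary; so stability cannot follow from continuity alone. What rescues the argument is the rigidity coming from the constraint $\alpha+\beta+\gamma=\pi$, which pins the return rotation to $2k\pi$ for all nearby parameter values rather than merely at $p_0$. Identifying and exploiting this invariance is the heart of the proof. A minor technical point, handled routinely by the finite Lipschitz bounds above, is that the interval endpoints move with the perturbed angles, so the margin argument must track the simultaneous motion of both the orbit points and the partition.
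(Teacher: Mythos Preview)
Your proposal is correct and follows essentially the same route as the paper: both control the first $N$ edge crossings by a margin/continuity argument and then use the rigidity $\alpha+\beta+\gamma=\pi$ to force the perturbed orbit to close up. The paper packages the closure step as a citation to Theorem~\ref{thm:sametriangle} (so that matching the first $N$ edges is declared ``sufficient''), whereas you unpack it explicitly as the computation that $T_p^{\,n}$ rotates by $2k(\alpha+\beta+\gamma)=2k\pi\equiv 0$; this is the same mechanism (it is the proof of Theorem~\ref{thm:sametriangle} transplanted to the circle via the PET shifts of Theorem~\ref{thm:pet}), and your identification of it as the crux is apt.
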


\begin{proof}
By Theorem \ref{thm:sametriangle}, if a trajectory passes through the same triangle twice, it is in the same position both times. Thus for a periodic trajectory with period $N$, it is sufficient to show that the first $N$ edges hit are unchanged by sufficiently small perturbations in $\alpha,\beta,\gamma,X,$ and $\tau$.
As trajectories that hit vertices are not allowed, every value of $X$ in the finite sequence of side crossings is some positive distance away from the endpoints of the IET intervals from Theorem \ref{billiardsiet}. Let $\ve$ be the minimum such distance.

We claim that changing the triangle angles $\alpha,\beta,\gamma$ such that no angle changes by more than $\frac{\ve}{2N}$ does not change the sequence of edges crossed. Furthermore, changing $X$ or $\tau$ by less than $\ve/{2}$ does not change this sequence. That is, at an arbitrary iteration $k$ of the associated IET with $k\le N$, $X$ is in the same subinterval regardless of either such perturbation.

Below, we repeatedly use the Tiling Billiards IET (Theorem \ref{billiardsiet}), and the explicit equations given in Remark \ref{rem:inequalities} and Theorem \ref{thm:pet}, to demonstrate that for all $k\le N$, $|X^k-X^k_{\text{perturb}}|<\ve$.

If $k$ is even, we can get to $X^k$ from $X$ by $k/2$ applications of the squared transformation. Therefore, the Tiling Billiards IET Theorem \ref{billiardsiet} tells us that $X^k=X+p\alpha+q\beta+r\gamma$, for integers $p$, $q$, and $r$ with $|p|+|q|+|r|\le k$.

If $k$ is odd, we find $X^k$ from $X^{k-1}$ through one application of the equations in Remark \ref{rem:inequalities}. So, we find that $X^k=\tau-X+p\alpha+q\beta+r\gamma$, where $|p|+|q|+|r|\le k+3$.

If $k$ is even, we have $$X^k=X+2(\alpha n_{\alpha,k}+\beta n_{\beta,k}+\gamma n_{\gamma,k}),$$ where $$\sum_i{n_{i,k}}\le \frac{k}{2}<N \text{ for } i=\alpha,\beta,\gamma.$$

If $k$ is odd, we have $$X^k=\tau-X-2(\alpha n'_{\alpha,k}+\beta n'_{\beta,k}+\gamma n'_{\gamma,k}),$$ where $$\sum_i{n'_{i,k}}\le \sum_i{n_{i,k-1}}+2 \le \frac{k+1}{2}+1\le\frac{N}{2}+1<N.$$

For a perturbation of the triangle angles, let $\theta$ be the largest change in any of the angles. For $k$ even, we have $$|X^k-X^k_{\text{perturb}}|\le2\theta\sum_i{n_{i,k}}<2\theta N\le \ve.$$ For $k$ odd, $$|X^k-X^k_{\text{perturb}}|\le2\theta\sum_i{n'_{i,k}}<2\theta N\le \ve.$$

Changing the angles does affect the locations of the intervals of the IET, but changes of this size are negligible.

For perturbations of $X$ or $\tau$, the implication that $$|X-X_\text{perturb}|<\frac{\ve}{2} \text{ and } |\tau-\tau_{\text{perturb}}|<\frac{\ve}{2}$$ imply $$|X^k-X^k_{\text{perturb}}|<\ve$$ is straightforward.
\end{proof}

\begin{proposition}\label{rational-periodic}
All trajectories on rational triangle tilings are periodic or drift-periodic.
\end{proposition}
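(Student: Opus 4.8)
The plan is to reduce the statement to a rationality property of the Tiling Billiards IET and then invoke Remark \ref{periodic-iet}. By a \emph{rational} triangle tiling I mean one whose angles are rational multiples of $\pi$; write $\alpha = \pi p/d$, $\beta = \pi q/d$, $\gamma = \pi r/d$ with positive integers $p,q,r,d$ (so $p+q+r=d$). Fix any trajectory that never runs into a vertex, with parameters $(X,\tau)$, and let $T$ be its Tiling Billiards IET from Theorem \ref{billiardsiet}; recall $\tau$ is constant along the orbit by Lemma \ref{taufixed}. I want to show that $X$ is a periodic point of $T$.

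First I would pass to the orientation-preserving square $T^2$, which on the slice $\{\tau = \text{const}\}$ is exactly the Tiling Billiards PET of Theorem \ref{thm:pet}. Its defining feature is that each application sends $X$ to $X\pm 2\alpha$, $X\pm 2\beta$, or $X\pm 2\gamma$ modulo $2\pi$, the choice depending on which intervals $X$ and its image occupy. The crucial observation is that for a rational tiling every one of these six increments is an integer multiple of $2\pi/d$, since $2\alpha = 2\pi p/d$, $2\beta = 2\pi q/d$, $2\gamma = 2\pi r/d$. Hence $(T^2)^n(X) - X \in \tfrac{2\pi}{d}\mathbb{Z}$ for every $n$, so the entire forward orbit of $X$ under $T^2$ lies in the finite coset $S = \{\, X + \tfrac{2\pi}{d}j \bmod 2\pi : j \in \mathbb{Z}\,\}$, which has at most $d$ points. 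Note that the interval endpoints of $T$ involve $\tau$ and need not be rational, but this is irrelevant: only the increments to $X$ matter, and those always land in $\tfrac{2\pi}{d}\mathbb{Z}$ regardless of where the cuts fall.

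Next I would conclude periodicity. Since the forward orbit of $X$ under $T^2$ is contained in the finite set $S$, two iterates must coincide, $(T^2)^{n_1}(X) = (T^2)^{n_2}(X)$ with $n_1 < n_2$. Because the trajectory avoids all vertices it avoids the IET discontinuities, so $T$, and hence $T^2$, is a bijection along the orbit (tiling billiards is time-reversible). Applying $(T^2)^{-n_1}$ gives $(T^2)^{n_2-n_1}(X) = X$, so $T^{2(n_2-n_1)}(X) = X$ and $X$ is a periodic point of the Tiling Billiards IET. Finally, Remark \ref{periodic-iet} states that periodic behavior of the IET corresponds precisely to trajectories that are periodic or drift-periodic, which is the desired conclusion: the IET records the position and direction within a positively-oriented triangle but forgets which triangle, and a return to the same $(X,\tau)$ in the same tile gives a periodic trajectory while a return in a different tile gives a drift-periodic one.

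The entire content of the argument is the commensurability observation of the second paragraph; everything else is bookkeeping. The one point that genuinely needs care is the step from \emph{finite forward orbit} to \emph{periodic orbit}: a priori a finite forward orbit is only eventually periodic, and promoting this to true periodicity — so that the trajectory actually closes up or drifts rather than merely doing so asymptotically — requires the invertibility of $T^2$, i.e.\ the reversibility of tiling billiards, together with the fact that a well-defined trajectory never meets a discontinuity. When citing Remark \ref{periodic-iet} I would also make explicit why returning to the same $(X,\tau)$ in a different tile forces a pure translation rather than some other motion: all positively-oriented triangles of a triangle tiling are translates of one another, so the only datum the IET discards is a translation vector.
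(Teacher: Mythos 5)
Your proof is correct and follows essentially the same route as the paper's: commensurability of the IET data forces the orbit of $X$ into a finite set, pigeonhole (plus invertibility) gives periodicity of the IET orbit, and Remark \ref{periodic-iet} converts this into a periodic or drift-periodic trajectory. If anything, your version is slightly more careful than the paper's, which asserts that the 6-IET has intervals of rational length (the cut points depend on $\tau$, so that claim can fail for irrational $\tau$), whereas you correctly rest the argument only on the shift increments $\pm 2\alpha,\pm 2\beta,\pm 2\gamma$ being multiples of $2\pi/d$, and you make explicit the invertibility step needed to upgrade an eventually periodic orbit to a genuinely periodic one.
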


In this context, \emph{rational} means that every angle is a rational multiple of $\pi$.

\begin{proof}
The 6-IET associated to a trajectory on a rational triangle tiling has intervals of rational lengths. Thus, there exists some rational $\epsilon$ that divides the length of each interval of the IET. That is, for any given point on the interval, there is a finite number of places (at most $\pi / \epsilon$) it can land after repeated iterations of the IET. The image of the point under the IET, after some number of iterations, must return to the original point, and therefore the associated trajectory is either periodic or drift periodic (See Remark \ref{periodic-iet}).
\end{proof}

\begin{theorem}\label{rational-stable}
All 
trajectories on rational triangle tilings are stable under small perturbations of the trajectory position.
\end{theorem}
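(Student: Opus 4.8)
The plan is to reduce to the two kinds of behavior that occur on a rational tiling. By Proposition~\ref{rational-periodic}, every trajectory on a rational triangle tiling is either periodic or drift-periodic, so I would treat these cases separately. For a periodic trajectory, stability under perturbations of $X$ and $\tau$ is already contained in the stability theorem for periodic trajectories proved above (which establishes stability under perturbations of both the angles and the position), so nothing new is required there. The genuine content of the theorem is therefore the drift-periodic case, where the goal is to run the same finite-margin argument; the only real difficulty is that a drift-periodic trajectory visits infinitely many triangles, so its sequence of edge crossings is honestly infinite.

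The key is to pass to the squared PET of Theorem~\ref{thm:pet} at the fixed value of $\tau$. By Remark~\ref{periodic-iet}, a drift-periodic trajectory corresponds to a \emph{periodic} orbit of this PET: although the trajectory never closes up in the plane, the pair $(X,\tau)$ returns to its starting value after some finite number of iterations, so the orbit $\{(X^{2j},\tau)\}_{j\ge 0}$ consists of only finitely many distinct points. Since trajectories through vertices are disallowed, none of these points lies on the boundary of its PET region; I would let $\varepsilon>0$ be the minimum, over these finitely many orbit points, of the distance from the point to the boundary of the PET region containing it.

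The crucial observation — and what makes the infinite edge sequence tractable — is that in Theorem~\ref{thm:pet} each translation $X''-X$ equals one of $\pm 2\alpha,\pm 2\beta,\pm 2\gamma$, and so depends only on which region $(X,\tau)$ lies in, not on $\tau$ nor on the precise location within the region (and $\tau''=\tau$ keeps the orbit on one horizontal slice). Consequently, as long as the perturbed orbit visits the same sequence of regions as the unperturbed one, it is offset from the unperturbed orbit by the \emph{constant} vector $(\delta_X,\delta_\tau)=(X_{\text{perturb}}-X,\ \tau_{\text{perturb}}-\tau)$ at every step: the discrepancy between the two orbits never accumulates. An induction on $j$ then closes the argument. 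If $|\delta_X|$ and $|\delta_\tau|$ are small enough that $(\delta_X,\delta_\tau)$ has norm less than $\varepsilon$, then each perturbed point $(X^{2j}+\delta_X,\ \tau+\delta_\tau)$ lies in the same region as $(X^{2j},\tau)$, hence is translated identically, so the offset is preserved into the next step. Because the orbit is periodic, checking this for the finitely many distinct orbit points suffices for all $j$, and the full (periodic) sequence of edges crossed is unchanged — precisely stability in the sense of Definition~\ref{def:stable}.

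The step I expect to be the main obstacle is exactly this control of error over an infinite orbit: a naive bound would let the discrepancy between the true and perturbed systems grow with the number of iterations and destroy the estimate. The resolution is the two facts highlighted above — that the orbit is genuinely periodic (so only finitely many distinct points need checking) and that the PET translations are independent of $\tau$ (so the perturbation enters only through the region boundaries and produces a constant, non-accumulating offset). A minor point to verify carefully is that the region-membership inequalities, which couple $X$ and $\tau$, can be checked on the single slice $\tau=\text{const}$, which is legitimate precisely because $\tau''=\tau$.
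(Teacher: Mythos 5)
Your proof is correct, but it takes a genuinely different route from the paper's. The paper's own proof never invokes periodicity at all: it works directly with the IET of Theorem \ref{billiardsiet}, observes that rationality of the angles makes all interval lengths rational multiples of $\pi$ with a common divisor $\lambda$, and takes $\epsilon$ to be the distance from $X$ to the nearest point of the $\lambda$-grid; since the IET permutes the $\lambda$-subintervals rigidly (up to flips and the common shift by $\tau$), this single margin protects the entire infinite orbit in one stroke. You instead use rationality only through Proposition \ref{rational-periodic}, splitting into the periodic case (delegated to the earlier stability theorem for periodic trajectories) and the drift-periodic case, where Remark \ref{periodic-iet} gives a finite PET orbit, and you then run a minimum-margin, constant-offset induction on the PET of Theorem \ref{thm:pet}. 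The trade-offs: the paper's grid argument is shorter and yields an explicit margin not depending on the orbit, but as written it only discusses perturbations of $X$ along a fixed IET slice (and glosses over the fact that odd iterates carry the grid to a $\tau$-translate of itself); your argument treats perturbations of $X$ and $\tau$ simultaneously, makes the non-accumulation of error completely transparent --- the offset between true and perturbed orbits is literally constant, because the PET translations are constant on each region and independent of $\tau$ --- and is more modular, since it really proves that \emph{any} trajectory whose PET orbit is finite is stable, with rationality entering only to guarantee that finiteness.
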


\begin{proof}
Since the angles of the triangle are rational, the associated IET to any trajectory on the tiling has intervals with rational lengths. There exists some rational $\lambda$ that divides the length of each interval. Split the entire interval into subintervals of length $\lambda$. For any point $X$ on the IET (corresponding to a particular trajectory), let $\epsilon$ be the distance from $X$ to nearest endpoint of a subinterval. Then, for an $\epsilon$ neighborhood of $X$, the IET construction guarantees the same behavior of the trajectory up to combinatorics.
\end{proof}

\subsection{Results from the PET}

\begin{proposition}\label{prop:onp}
On tilings by obtuse triangles, trajectories with $\tau$ between $2\alpha+2\beta$ and $2\gamma$ are not periodic.
\end{proposition}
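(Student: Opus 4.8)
The plan is to analyze the orientation-preserving \emph{square} of the IET, i.e. the Tiling Billiards PET of Theorem~\ref{thm:pet}, along the single horizontal slice at the given value of $\tau$. The key structural fact to establish is that, precisely because the triangle is obtuse, the band $2\alpha+2\beta<\tau<2\gamma$ is nonempty — it is symmetric about $\pi$, since $2\alpha+2\beta=2\pi-2\gamma$, and it is nonempty exactly when $\gamma>\pi/2$ — and that on this band the two PET cases that execute a $\gamma$- or $-\gamma$-move are vacuous. I will then convert this into a statement about the net move counts and invoke Lemma~\ref{nx}.

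Concretely, first I would fix $\tau\in(2\alpha+2\beta,2\gamma)$ and, for each of the six cases of Theorem~\ref{thm:pet}, rewrite its condition on $\tau$ as a condition on $X$ and intersect it with the relevant side-interval. A direct check shows that the case giving $X\mapsto X+2\gamma$ (side $B$, then $A$) has nonempty $X$-range only when $\tau>2\gamma$, and the case giving $X\mapsto X-2\gamma$ (side $A$, then $B$) only when $\tau<2\alpha+2\beta$; both are therefore empty on this slice. The surviving cases partition the admissible part of $X\in(0,2\pi)$ — the complement of the red zones of Lemma~\ref{redzone} — into two pieces: the interval $J_1=(\tau-2\alpha-2\beta,\tau)\subset(0,2\gamma)$ lying in side $C$, on which the map is the $\alpha$-move $X\mapsto X+2\alpha$ on its lower part and the $-\beta$-move $X\mapsto X-2\beta$ on its upper part; and the interval $J_2=(2\gamma,2\pi)$, consisting of sides $A$ and $B$, on which the map is the $\beta$-move $X\mapsto X+2\beta$ on side $A$ and the $-\alpha$-move $X\mapsto X-2\alpha$ on side $B$. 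A short computation with these four formulas then shows that $J_1$ and $J_2$ are each invariant under the squared IET, and that $J_1\sqcup J_2$ is exactly the admissible set (they are disjoint because $\tau<2\gamma$).

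The conclusion follows by move-counting. Since the $\pm\gamma$ cases are empty, every orbit has $n_\gamma=0$. A genuinely periodic trajectory returns to its starting triangle, so by Lemma~\ref{nx} it satisfies $n_\alpha=n_\beta=n_\gamma$, forcing $n_\alpha=n_\beta=n_\gamma=0$. But by invariance an orbit lies entirely in $J_1$ or entirely in $J_2$. In $J_1$ the only available moves are $\alpha$-moves and $-\beta$-moves, so $n_\alpha$ counts the $\alpha$-moves (nonnegatively) and $n_\beta$ counts the $-\beta$-moves (nonpositively); both can vanish only if no move is ever made. The same holds in $J_2$ with $(\alpha,-\beta)$ replaced by $(-\alpha,\beta)$. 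Hence a periodic orbit performs no moves at all, which is absurd, and therefore no trajectory with $\tau\in(2\alpha+2\beta,2\gamma)$ is periodic.

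I expect the main obstacle to be purely bookkeeping: correctly reducing each of the six case-conditions of Theorem~\ref{thm:pet} to an $X$-interval for $\tau$ in the stated band, and carefully excising the inadmissible $(X,\tau)$ coming from Lemma~\ref{redzone}, so that the admissible set is seen to be exactly $J_1\sqcup J_2$ with the claimed move-labels. Once that decomposition is pinned down, the invariance check and the move-count contradiction are immediate.
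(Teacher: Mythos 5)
Your proposal is correct and takes essentially the same route as the paper: the paper's proof likewise fixes the slice $2\alpha+2\beta<\tau<2\gamma$, observes that the $\pm\gamma$-moves are impossible there so the IET reduces to two 2-interval IETs with interval lengths $2\alpha$ and $2\beta$, and then invokes Lemma~\ref{nx} to conclude that a trajectory can never close up since $n_\alpha$ (equivalently, your $n_\alpha,n_\beta$ count) cannot equal $n_\gamma=0$. Your explicit case-check of Theorem~\ref{thm:pet}, the excision of the red zones of Lemma~\ref{redzone}, and the invariance of $J_1\sqcup J_2$ are just careful write-ups of what the paper reads off its PET diagram.
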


\begin{proof}
Figure \ref{escape-obtuse} shows a PET diagram for a typical obtuse triangle. For $2\alpha+2\beta<\tau<2\gamma$, the IET reduces to two 2-interval IET's, each with interval lengths $2\alpha$ and $2\beta$. In particular, there are no $\gamma$-moves or $-\gamma$-moves, and a trajectory has a nonzero number of either $\alpha$-moves or $-\alpha$-moves (but not both) in a given orbit. Thus, $n_\alpha$ is never equal to $n_\gamma=0$, so by Lemma \ref{nx}, the trajectory can never close up.
\end{proof}

\begin{definition}
Consider the square torus, with horizontal edges labeled with one symbol and vertical edges labeled with another. A \emph{Sturmian sequence} is a bi-infinite sequence of the two symbols, which is a cutting sequence on the square torus corresponding to a trajectory with an irrational slope.
\end{definition}

We have shown that any trajectory with $2\alpha+2\beta<\tau<2\gamma$ is represented by a rotation of $2\alpha$ on a circle of circumference $2\alpha+2\beta$. For irrational $\alpha/\beta$, this characterizes a positive-measure class of aperiodic escaping trajectories. In particular, these trajectories go through edge $C$ in every triangle they pass, and the sequence of $A$s and $B$s passed in between is a Sturmian sequence.

As a consequence of our later Lemma \ref{gamma-obtuse} parts (2) and (4), for obtuse or right triangles, there are no aperiodic trajectories with $\tau$ outside of $(2\alpha+2\beta,2\gamma)$. For acute triangles, there are other positive-measure regions of aperiodic trajectories.

\begin{figure}[!h]
\begin{center}
\includegraphics[width=\textwidth]{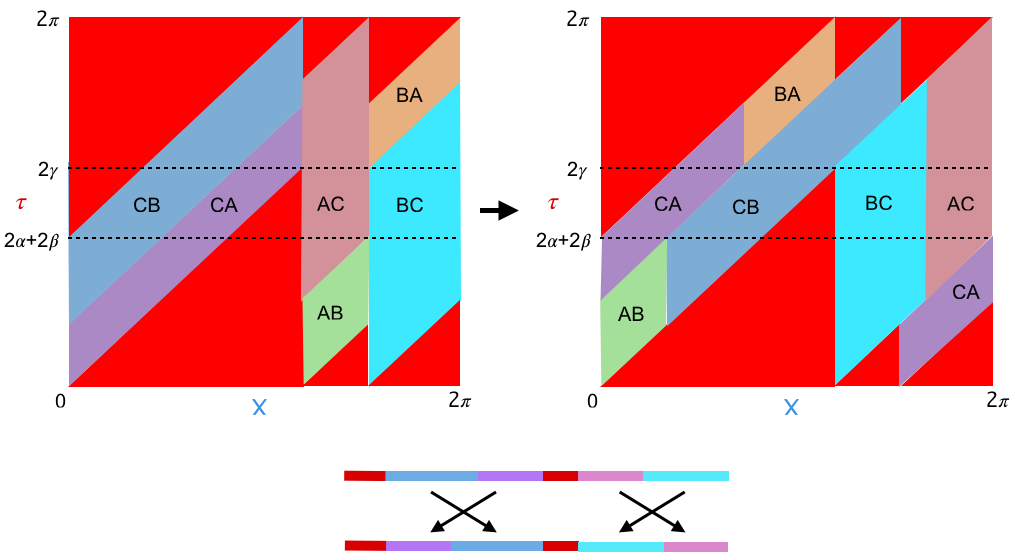}

\caption{This is a PET diagram for an obtuse triangle. Below is a slice taken from the central region, where the IET is reduced to two 2-IETs. \label{escape-obtuse}}
\end{center}
\end{figure}

\begin{definition}
A \emph{comoving region} in a PET is a maximal connected set of points that do the same thing under every iteration of the PET.
\end{definition}

\begin{proposition}\label{prop:bound}
If $\alpha,\beta$ and $\gamma$ are rational, with greatest common divisor $\epsilon$, then
\begin{enumerate}
\item The period of a periodic trajectory is bounded by $2\pi/\epsilon$, and
\item The number of different trajectories, up to combinatorics, is bounded by $\pi^2/(2\epsilon^2)$.
\end{enumerate}
\end{proposition}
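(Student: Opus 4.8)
The plan is to exploit rationality to force everything onto the lattice $2\epsilon\Z$, and then count. Because $\alpha,\beta,\gamma$ are integer multiples of $\epsilon$ with $\alpha+\beta+\gamma=\pi$, every interval endpoint $2\gamma,\,2\gamma+2\alpha$ and every shift constant appearing in the Tiling Billiards IET (Theorem~\ref{billiardsiet}, Remark~\ref{rem:inequalities}) and in its square (Theorem~\ref{thm:pet})---namely $\pm2\alpha,\pm2\beta,\pm2\gamma$ and $2\gamma,\,2\gamma-2\beta,\,-2\beta$---is an integer multiple of $2\epsilon$; moreover $2\pi\in2\epsilon\Z$. Both statements then reduce to lattice bookkeeping on the circle $\R/2\pi\Z$ and on the PET.

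For (1) I would follow the orbit of the position $X$ under the IET. Each step of the orientation-preserving squared map sends $X$ to $X\pm2\alpha$, $X\pm2\beta$, or $X\pm2\gamma$, so $X\bmod 2\epsilon$ is invariant and the even-step orbit of a starting value $X_0$ stays in the coset $X_0+2\epsilon\Z\subset\R/2\pi\Z$, which contains exactly $2\pi/(2\epsilon)=\pi/\epsilon$ points. The orientation-reversing formula $X'=\tau+c-X$ with $c\in2\epsilon\Z$ shows that the odd-step images lie in the single further coset $\tau-X_0+2\epsilon\Z$. Hence the orbit visits at most $2\cdot(\pi/\epsilon)=2\pi/\epsilon$ distinct positions. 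Since a periodic trajectory corresponds to a finite IET cycle whose length equals its number of distinct positions, and each IET step is a single edge crossing, the period is at most $2\pi/\epsilon$.

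For (2) I would work in the PET. The region boundaries in Theorem~\ref{thm:pet}, together with those of the disallowed triangles of Lemma~\ref{redzone}, are all of the form $X\in2\epsilon\Z$ (vertical) or $\tau-X\in2\epsilon\Z$ (diagonal). On any cell cut out by these finitely many lines---$\pi/\epsilon$ vertical and $\pi/\epsilon$ diagonal---exactly one case of the PET applies, so the map acts by a single horizontal shift in $2\epsilon\Z$; since the family of cut lines is invariant under such shifts, the image of a cell is again a single cell. Iterating, the combinatorics is constant on each cell, so the comoving regions are exactly these cells, each a parallelogram of area $4\epsilon^2$. To obtain the factor $1/2$, I would use that reversing a trajectory replaces $\tau$ by $2\pi-\tau$ while tracing the same curve, so each geometric trajectory has a unique representative with $0<\tau<\pi$; counting the area-$4\epsilon^2$ cells inside the half-domain $\{0<\tau<\pi\}$, of area $2\pi^2$, then bounds the number of combinatorial types by $2\pi^2/(4\epsilon^2)=\pi^2/(2\epsilon^2)$.

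The easy part is that no new cut lines appear under iteration---this is immediate once one notes the shift is constant on each cell and the cut-line family is translation invariant---so the comoving regions genuinely coincide with the $2\epsilon$-lattice cells rather than being strictly finer. The main obstacle is the honest accounting of the factor $1/2$: over the full square the cell count is only $\pi^2/\epsilon^2$, so the sharper bound forces one to fix the meaning of ``up to combinatorics'' as identifying a trajectory with its time-reverse $\tau\mapsto2\pi-\tau$ and to verify this identification is essentially two-to-one---in particular handling cells meeting the line $\tau=\pi$ and the case where $\pi/(2\epsilon)$ is not an integer, so that cells can straddle the boundary of the half-domain. I expect this reversal bookkeeping, rather than the lattice estimate itself, to be the delicate step.
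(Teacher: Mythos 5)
Your part (1) is correct and is essentially the paper's own argument: the paper runs the same lattice count on the squared map (at most $\pi/\epsilon$ reachable points, times two edge crossings per PET step), while you split the full orientation-reversing IET into the even coset $X_0+2\epsilon\Z$ and the odd coset $\tau-X_0+2\epsilon\Z$; the bookkeeping is identical.

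Part (2) has a genuine gap, and it sits exactly where you flagged the ``delicate step.'' Your factor of $1/2$ is supposed to come from identifying a trajectory with its time-reverse and then counting lattice cells in the half-domain $\{0<\tau<\pi\}$, whose area is $2\pi^2$. But the cells are parallelograms bounded by vertical lines $X\in 2\epsilon\Z$ and diagonals $\tau-X\in 2\epsilon\Z$, so each cell spans a $\tau$-interval of length $4\epsilon$; cells are not confined to horizontal strips, and dividing the strip's area by the cell area $4\epsilon^2$ silently assumes that the relevant cells tile the strip, which is false. Concretely, take the equilateral tiling, $\epsilon=\pi/3$: there are $\pi^2/\epsilon^2=9$ cells, each spanning a $\tau$-arc of length $4\pi/3>\pi$, so \emph{no} cell is contained in the half-domain and \emph{every} one of the $9$ cells meets it. Counting cells contained in the strip gives $0$; counting cells meeting the strip gives $9$; neither gives the claimed $\pi^2/(2\epsilon^2)=4.5$. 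So your method, made honest, only yields the bound $\pi^2/\epsilon^2$. (Separately, quotienting by $\tau\mapsto 2\pi-\tau$ changes what is being counted: the paper's ``up to combinatorics'' does not identify a trajectory with its reverse.)

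The paper obtains the factor of $1/2$ by a much simpler observation that never touches the line $\tau=\pi$: every trajectory, periodic or drift-periodic, has period at least $4$, so its PET orbit contains at least two distinct points which do different things (a constant move sequence would force period $2$), hence lie in at least two distinct comoving regions. Since a comoving region determines the entire edge-crossing sequence, distinct combinatorial types consume disjoint sets of regions, at least two apiece, out of at most $\pi^2/\epsilon^2$ regions; this gives $\pi^2/(2\epsilon^2)$ directly. Replacing your half-domain count with this ``at least two regions per trajectory'' argument repairs your proof while keeping your (correct) lattice-cell setup.
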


\begin{proof}
\begin{enumerate}
\item By Theorem \ref{thm:pet}, the PET shifts points horizontally by $\pm 2\alpha, \pm 2\beta$, or $\pm 2\gamma$. Linear combinations of these quantities are always multiples of $2\epsilon$. As soon as a given point comes up for the second time in the orbit of a point, the orbit is periodic. On an interval of width $2\pi$, the greatest number of different points reachable by shifts that are multiples of $2\epsilon$ is $2\pi/2\epsilon = \pi/\epsilon$. Each point in the orbit corresponds to crossing $2$ edges, so we multiply by $2$.

\item When we cut the PET space $[0,2\pi]\times[0,2\pi]$ into comoving regions, there is a diagonal line at every $2\epsilon$ (measured in the horizontal direction), and a vertical line at every $2\epsilon$, so there are at most $\pi^2/\epsilon^2$ comoving regions in the $2\pi$ by $2\pi$ torus. The period of any trajectory (periodic or drift periodic) is at least 4, so each trajectory includes at least $2$ comoving regions and the number of distinct trajectories, up to combinatorics, is bounded by $\pi^2/(2\epsilon^2)$.
\end{enumerate}
\end{proof}

\begin{example}
The lowest possible period of a periodic trajectory is $6$, which can be achieved around any vertex of every triangle tiling, and corresponds an IET slice near the top or bottom of the PET, i.e. with $\tau$ close to $0$ or $2\pi$.
For the equilateral triangle tiling, Proposition \ref{prop:bound} gives a maximum period of $2\pi/\epsilon=6$, so every trajectory has period $6$.
\end{example}

\begin{example}
Figure \ref{components} shows the maximal comoving regions for the isosceles right triangle PET. The top and bottom (purple and blue) regions correspond to periodic trajectories of period $6$, the top one counter-clockwise and the bottom one clockwise. The middle (green and orange) regions correspond to drift-periodic trajectories of period $4$, whose sequences are offset by one letter. We conjecture (Conjecture \ref{central_tau}) that all escaping trajectories arise in this way, in comoving regions that intersect the line $\tau=\pi$:
\end{example}

\begin{figure}[!h]
\begin{center}
\includegraphics[width=7.5cm]{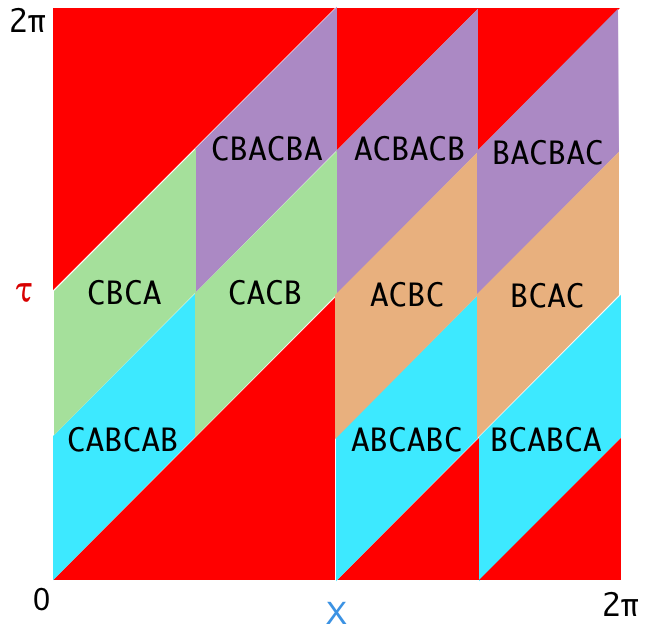}
\caption{Maximal comoving regions for the PET associated to the isosceles right triangle tiling, labeled with one period of the corresponding edge crossing sequence. Each iteration of the PET corresponds to hitting two sides. The corresponding periodic and drift-periodic trajectories are in Figure \ref{fig:per-dp}.  \label{components}}
\end{center}
\end{figure}

\begin{conjecture}\label{central_tau}
Every escaping trajectory is in a comoving region that intersects the line $\tau=\pi$. If a given triangle has a drift periodic trajectory, it is the only escaping trajectory up to combinatorics. In particular, this implies that drift periodic trajectories are read the same way forwards as backwards (they are palindromes -- see Figure \ref{palindrome}). If a triangle has an aperiodic trajectory, all other escaping trajectories on that triangle exhibit similar behavior. For instance, if one trajectory is a Sturmian sequence of subwords $v$ and $w$, the others are also a Sturmian sequences of $v$ and $w$, corresponding to the same irrational slope.
\end{conjecture}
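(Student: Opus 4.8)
The plan is to build everything on a single \emph{time-reversal} symmetry of the PET. Reversing the orientation of a trajectory swaps the front and back ends of its chord, which by Definition~\ref{xtdef} sends the parameters $(X,\tau)$ to
\[
\rho(X,\tau)=(X-\tau,\;2\pi-\tau)\pmod{2\pi}.
\]
This $\rho$ is an involution, and a direct check using the explicit formulas of Theorem~\ref{thm:pet} confirms that it conjugates the PET map $P$ to its inverse, $\rho\circ P\circ\rho=P^{-1}$. Consequently $\rho$ permutes comoving regions, sending the region with a given edge-crossing sequence to the region with the \emph{reversed} sequence; a comoving region $R$ is $\rho$-invariant precisely when its trajectory is a palindrome. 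The crucial geometric feature is that $\rho$ fixes the line $\tau=\pi$ setwise (acting there by $X\mapsto X-\pi$) and interchanges the two half-squares $\{\tau<\pi\}$ and $\{\tau>\pi\}$. Hence any connected $\rho$-invariant set containing a point with $\tau\ne\pi$ also contains its mirror image on the other side of $\tau=\pi$, and by the intermediate value theorem must cross $\tau=\pi$. This reduces the first assertion of the conjecture to showing that the comoving region of every escaping trajectory is $\rho$-invariant, and it makes the palindrome claim (C) a formal consequence of the first claim together with the uniqueness claim (B): a drift-periodic trajectory and its time reversal are both escaping, so if escaping trajectories are unique up to combinatorics the two must share a comoving region, i.e. the sequence is a palindrome.

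For the obtuse and right case this program can be closed using results already in hand. By Proposition~\ref{prop:onp} together with Lemma~\ref{gamma-obtuse}, the escaping trajectories are confined to the band $2\alpha+2\beta<\tau<2\gamma$. Since $2\alpha+2\beta=2\pi-2\gamma$, this band is exactly the $\rho$-symmetric interval $(2\pi-2\gamma,2\gamma)$ centered at $\tau=\pi$, so its comoving regions automatically meet $\tau=\pi$, giving the first claim. Moreover Proposition~\ref{prop:onp} reduces the dynamics on this band to a single rotation by $2\alpha$ on a circle of circumference $2\alpha+2\beta$. If the rotation number $\alpha/(\alpha+\beta)$ is rational, the whole band is one drift-periodic family up to combinatorics; if it is irrational, it is one Sturmian family of fixed slope. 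Either way there is a unique escaping family, which yields claim (B), the palindrome statement (C), and the ``same subwords, same slope'' part of (4) in the obtuse and right case at once.

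The acute case is where the real work — and the main obstacle — lies, because here there are additional positive-measure bands of escaping trajectories off the central line, and a given escaping comoving region may sit almost entirely at values $\tau\ne\pi$. The plan is to show that such a region nevertheless extends, as a connected two-dimensional tube, all the way to $\tau=\pi$. I would attack this through renormalization: the squared map is an orientation-preserving IET with displacements in $\{\pm2\alpha,\pm2\beta,\pm2\gamma\}$ (Theorem~\ref{thm:pet}), and I would set up a Rauzy--Veech-type induction adapted to this family, aiming to prove that the induction is \emph{equivariant} with respect to $\rho$. If the renormalization commutes with time reversal, then escaping (non-terminating) itineraries are exactly those whose renormalization orbit recurs, and $\rho$-equivariance should force the corresponding comoving tube to be carried to itself in the limit, hence to be $\rho$-invariant and to meet $\tau=\pi$; the same analysis, applied to an aperiodic trajectory and its neighbors, would give the ``similar behavior'' conclusion of (4). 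The genuinely hard steps are controlling renormalization for IETs \emph{with flips}, which lack the clean Rauzy--Veech theory of the orientation-preserving case, and proving that every off-center escaping tube propagates to the symmetric line rather than terminating at an interior singularity of the PET. I expect this propagation statement for acute triangles to be the crux of the whole conjecture.
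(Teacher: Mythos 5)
First, a point of status: the paper does not prove this statement at all --- it is a conjecture, supported only by a heuristic (balance of clockwise versus counterclockwise moves, impossibility of spiraling via Corollary~\ref{nospiralnodense}, and an appeal to \cite{nogueiraflip}). So your proposal cannot be compared against a proof; it can only be judged on its own terms, as a proposed route to settling the conjecture. On those terms, your central structural observation is correct and genuinely new relative to the paper: the involution $\rho(X,\tau)=(X-\tau,\,2\pi-\tau)$ really is the time-reversal of the trajectory, and one can check from Theorem~\ref{thm:pet} that it conjugates the PET to its inverse (for instance the $CB$ region $\{0<X<2\gamma,\ X+2\alpha<\tau<X+2\alpha+2\beta\}$ is carried exactly onto the $BC$ region, and the forbidden cases (1)--(3) of Lemma~\ref{redzone} are carried onto cases (4)--(6)). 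Recasting the palindrome claim as $\rho$-invariance of a comoving region is a clean idea the authors do not have.

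However, the proposal has concrete gaps beyond the acute case you flag yourself. The claim that ``the obtuse and right case can be closed using results already in hand'' is not substantiated. Proposition~\ref{prop:onp} and Lemma~\ref{gamma-obtuse} confine only the \emph{aperiodic} escaping trajectories to the band $2\alpha+2\beta<\tau<2\gamma$, and part (2) of that lemma requires $\alpha/\beta$ irrational; when $\alpha/\beta$ is rational, Corollary~\ref{cor:obtuse-per-dp} explicitly allows drift-periodic --- hence escaping --- trajectories with $\tau>2\gamma$ or $\tau<2\alpha+2\beta$, about whose comoving regions your argument says nothing. The right case fails outright: for $\gamma=\pi/2$ the band $(2\alpha+2\beta,2\gamma)=(\pi,\pi)$ is empty, yet the isosceles right triangle tiling has drift-periodic trajectories (Figures~\ref{fig:per-dp} and~\ref{components}). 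Moreover, even inside the band, ``the band is $\rho$-symmetric, hence its comoving regions meet $\tau=\pi$'' is a non sequitur: a region inside a symmetric band need not touch the center line. What actually saves this step is the stronger structure behind Proposition~\ref{prop:onp}: in the band there are \emph{two} invariant circles (not one rotation), on $[2\gamma,2\pi]$ the itinerary depends only on $X$, and on $[\tau-2\alpha-2\beta,\tau]$ only on $X-\tau$, so comoving regions are vertical or slope-one strips spanning the full height of the band, and such strips do cross $\tau=\pi$. There is also a small loophole in your intermediate-value step: on the parameter torus, $\tau\mapsto 2\pi-\tau$ fixes $\tau=0$ as well as $\tau=\pi$, so a connected $\rho$-invariant set could a priori cross $\tau=0$ instead; you need the additional observation that all allowed trajectories with $\tau$ near $0$ or $2\pi$ are the period-$6$ loops around a vertex, so escaping regions cannot reach there. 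Finally, the acute case --- which you correctly identify as the crux --- is left entirely to a hoped-for $\rho$-equivariant Rauzy--Veech induction for flipped IETs that is not constructed, with no mechanism to rule out an escaping comoving region terminating at an interior singularity before reaching $\tau=\pi$. So the proposal is a promising program with one correct new lemma, not a proof, and the portion claimed to be complete is in fact complete only for obtuse tilings with $\alpha/\beta$ irrational.
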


\emph{Justfication for conjecture.} Suppose we have an orbit in our PET, representing an escaping trajectory, which is disjoint from $\tau=\pi$. Since the trajectory does not occur at $\tau=\pi$, intuition suggests that it relies on features specific to the outer regions of the PET. One thing we know is that when $\tau$ becomes small, the clockwise moves ($CA,AB,BC$) become larger portions of the IETs, while when $\tau$ gets closer to $2\pi$, the counterclockwise moves start winning out. If an escaping trajectory were to rely on one direction of rotation over the other, its total curvature would accumulate, implying either spiraling or crossing. Spiraling would have to go both ways, and it cannot continue indefinitely in the inward direction (Corollary \ref{nospiralnodense}). Crossing is similarly impossible (Theorem \ref{sametrajectory}).

In the aperiodic case, that there is only one such trajectory intersecting $\tau=\pi$ seems to relate to Theorem A of \cite{nogueiraflip}. Intuition and our experience with simulations in our computer program\footnote{Our computer program, which models tiling billiards on several different tilings, was written in Java by Pat Hooper and Alex St Laurent. It can be run in a browser here: \href{http://awstlaur.github.io/negsnel/}{http://awstlaur.github.io/negsnel/}} extend this to the drift periodic case.

\begin{figure}
\begin{center}
\includegraphics[height=150pt]{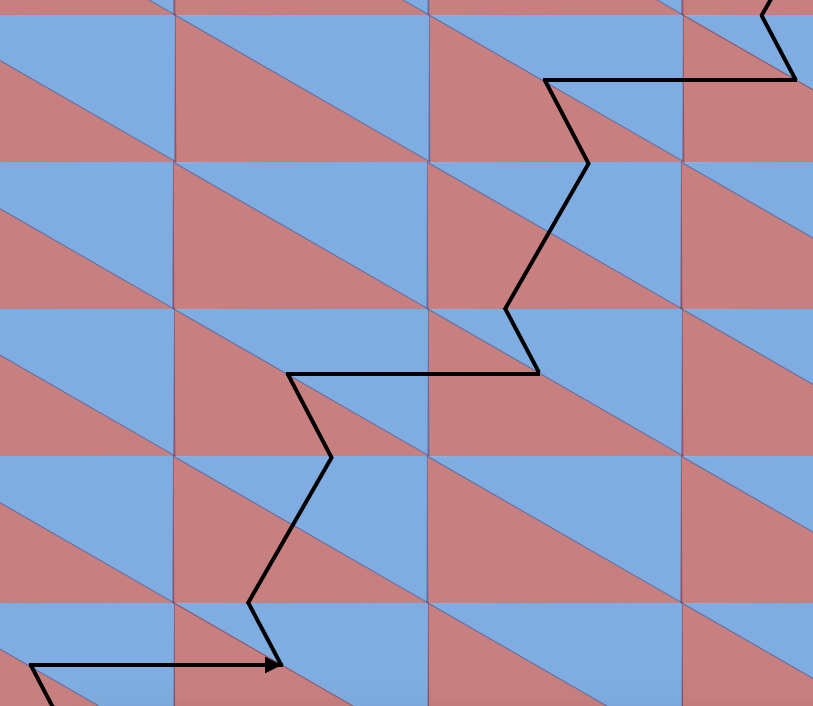} \qquad
\includegraphics[height=150pt]{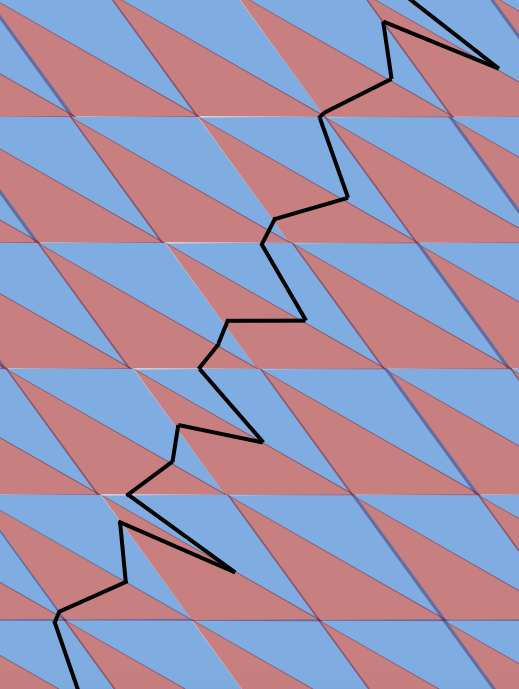}
\caption{(a) A drift-periodic trajectory on the $30^\circ-60^\circ-90^\circ$ triangle tiling, which cuts through edges $\overline{CBCBCA}$, no matter which direction we are traversing the trajectory. (b) A drift-periodic trajectory on the $126^\circ-30^\circ-24^\circ$ triangle tiling, which cuts through edges $\overline{BCBCACBCACBCACBC}$; this sequence is again the same in both directions. We believe (Conjecture \ref{central_tau}) that every drift-periodic trajectory exhibits this ``palindrome'' structure. \label{palindrome}}
\end{center}
\end{figure}

\subsection{The Arnoux-Yoccoz IET and its tiling billiards trajectory}

\begin{definition}\label{ayiet}
The \emph{Arnoux-Yoccoz interval exchange transformation} is defined on a circle of circumference $1$. Let $a$ be the real solution to the equation $x+x^2+x^3=1$. Define the following piecewise transformation on the circle (see \cite{arnoux}, \S 1):
\[f(x) = \begin{cases}
    x+\frac{a}{2}+\frac{1}{2} \ \mod 1, x \in \left[0, \frac{a}{2}\right) \\
    x-\frac{a}{2}+\frac{1}{2} \ \mod 1, x \in \left[\frac{a}{2},a\right) \\
    x+\frac{a^2}{2}+\frac{1}{2} \mod 1, x \in \left[a, a+\frac{a^2}{2}\right) \\
    x-\frac{a^2}{2}+\frac{1}{2} \mod 1, x \in \left[a+\frac{a^2}{2}, a+a^2\right)\\
    x+\frac{a^3}{2}+\frac{1}{2} \mod 1, x \in \left[a+a^2, a+a^2+\frac{a^3}{2}\right)\\
    x-\frac{a^3}{2}+\frac{1}{2} \mod 1, x \in \left[a+a^2+\frac{a^3}{2}, 1\right)\\
  \end{cases}\] \vspace{-1em}

Geometrically, swap the pairs of subintervals of equal length, and then rotate by a half turn.
\end{definition}


\begin{theorem}\label{thm:ayiet}
For $\alpha = \frac{\pi(1-a)}{2}$, $\beta = \frac{\pi(1-a^2)}{2}$, and $\tau = \pi$, the square of the tiling billiards IET (defined in Theorem \ref{billiardsiet}) is conjugate to the Arnoux-Yoccoz IET by a linear transformation.
\end{theorem}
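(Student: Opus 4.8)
The plan is to compute the square of the tiling billiards IET directly from the polygon exchange transformation (Theorem \ref{thm:pet}) specialized to $\tau = \pi$ (the chord through the circumcenter), and then to exhibit an explicit affine map of circles conjugating the resulting exchange to the Arnoux-Yoccoz IET of Definition \ref{ayiet}. Fixing $\tau = \pi$ in the six cases of Theorem \ref{thm:pet} converts each inequality on $\tau$ into an inequality on $X$, so $T^2$ becomes an honest orientation-preserving $6$-interval exchange of the circle $[0,2\pi)$. First I would read off, from that case analysis, the six subintervals together with their six translation constants, which are precisely $\pm 2\alpha, \pm 2\beta, \pm 2\gamma$ (these also follow from squaring the formulas in Remark \ref{rem:inequalities}).

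Next I would substitute $2\alpha = \pi(1-a)$, $2\beta = \pi(1-a^2)$, and (using $\alpha+\beta+\gamma=\pi$) $2\gamma = \pi(1-a^3)$, and then simplify every breakpoint and every translation using the defining relation $a + a^2 + a^3 = 1$. The purpose of this step is that all the apparently messy combinations collapse: the six interval lengths become $\pi a,\ \pi a^2,\ \pi a^2,\ \pi a^3,\ \pi a^3,\ \pi a$, and the six translations become numbers of the form $\pm(\pi - \pi a^k)$. This is exactly the combinatorial data of the Arnoux-Yoccoz map after scaling its unit circle up to circumference $2\pi$.

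Finally I would write down the conjugacy. Scaling by $2\pi$ sends the Arnoux-Yoccoz breakpoints $0,\ a/2,\ a,\ a+a^2/2,\ a+a^2,\ a+a^2+a^3/2$ to points of $[0,2\pi)$, and comparing the two cyclic sequences of gaps shows the partitions agree up to a single rotation. Concretely, I expect the orientation-preserving linear map $\phi(y) = 2\pi y - \pi a \pmod{2\pi}$ to carry each Arnoux-Yoccoz subinterval onto the matching subinterval of $T^2$. Since $\phi$ has slope $2\pi$, conjugation multiplies each Arnoux-Yoccoz translation $\delta$ by $2\pi$ (the additive constant cancels), so it remains to check, one interval at a time, that $2\pi\delta$ agrees modulo $2\pi$ with the corresponding translation of $T^2$. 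Verifying these six matchings of intervals and translations establishes $T^2 = \phi \circ f \circ \phi^{-1}$, which is the claimed linear conjugacy.

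The main obstacle is bookkeeping rather than conceptual. The multiset of interval lengths of $T^2$ coincides with that of the scaled Arnoux-Yoccoz map, but the two occur in \emph{different cyclic orders}, so the conjugacy cannot be the naive scaling $y \mapsto 2\pi y$; one must locate the correct rotational offset $-\pi a$ and then confirm that this single affine map simultaneously matches all six intervals and intertwines all six translations modulo $2\pi$. Pinning down that offset, and carefully handling the wrap-around where $T^2$ and $\phi$ cross $0$, is where the care is required.
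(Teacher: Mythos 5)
Your proposal is correct and takes essentially the same route as the paper: specialize the squared tiling billiards IET (equivalently, the PET of Theorem \ref{thm:pet}) to $\tau=\pi$ with the Arnoux--Yoccoz angles, read off the six intervals of lengths $\pi a,\pi a^2,\pi a^2,\pi a^3,\pi a^3,\pi a$ with translations $\pm 2\alpha,\pm 2\beta,\pm 2\gamma$, and conjugate by scaling combined with a rotational offset of $\pi a$. The paper's phrase ``shifting forward by $\pi a$ and scaling down to the interval $[0,1)$'' is exactly the inverse of your map $\phi(y)=2\pi y-\pi a$, so your identification of the offset (and the observation that naive scaling alone fails because the cyclic orders of the interval lengths differ) matches the published argument.
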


\begin{proof}
Set $\alpha = \frac{\pi(1-a)}{2}$, $\beta = \frac{\pi(1- a^2)}{2}$, $\gamma = \frac{\pi(1- a^3)}{2}$, and $\tau = \pi$ in the tiling billiards system. The resulting IET, by Theorem \ref{billiardsiet}, is described by the following piecewise transformation on the interval $[0, 2\pi]$ with endpoints identified: \vspace{-1em}

\[g(y) = \begin{cases}
    y+2\alpha = y-\pi{a}+\pi \ \mod 2\pi, y \in [0, \pi a) \\
    y- 2\beta = y+\pi{a^2}+\pi \mod 2\pi, y \in [\pi{a}, \pi(a+a^2))\\
    y+2\beta = y-\pi{a^2}+\pi \mod 2\pi, y \in [\pi(a+a^2), \pi(a+2a^2)) \\
    y-2\gamma = y+\pi{a^3}+\pi \mod 2\pi, y \in [\pi(a+2a^2), \pi(a+2a^2+a^3))\\
    y+2\gamma = y-\pi{a^3}+\pi \mod 2\pi, y \in [\pi(a+2a^2+a^3), \pi(a+2a^2+2a^3)) \\
    y-2\alpha = y+\pi{a}+\pi \ \mod 2\pi, y \in [\pi(a+2a^2+2a^3), 2\pi)
  \end{cases}\] \vspace{-1em}

Shifting forward by $\pi a$ and scaling down to the interval $[0,1)$ recovers the Arnoux-Yoccoz IET of Definition \ref{ayiet}.
\end{proof}

\begin {proposition}
The trajectory corresponding to $\alpha = \frac{\pi(1-a)}{2}$, $\beta = \frac{\pi(1-a^2)}{2}$, and $\tau = \pi$ is escaping for every value of $X$.
\end{proposition}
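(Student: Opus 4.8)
The plan is to reduce the geometric claim about escaping to a purely dynamical statement about the associated interval exchange, and then invoke the well-known minimality of the Arnoux-Yoccoz IET. Recall that by Theorem \ref{sametrajectory}(2) a trajectory escapes precisely when it fails to be periodic, so it suffices to show that for $\alpha = \pi(1-a)/2$, $\beta = \pi(1-a^2)/2$, $\gamma = \pi(1-a^3)/2$, and $\tau = \pi$, no trajectory is periodic, regardless of $X$. First I would verify that $(X,\pi)$ is an admissible position for every $X\in(0,2\pi)$: running through the six cases of Lemma \ref{redzone} with $\tau=\pi$ and using $2\alpha,2\beta,2\gamma<\pi$ (equivalently $a,a^2,a^3>0$), one checks that none of the forbidden inequalities can hold, so the entire horizontal slice $\tau=\pi$ of the PET consists of genuine trajectories.

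Next I would pass to the IET. Let $T$ denote the tiling billiards IET of Theorem \ref{billiardsiet} for these parameters. By Theorem \ref{thm:ayiet}, its square $T^2$ is conjugate, by an affine map of the circle, to the Arnoux-Yoccoz IET $f$ of Definition \ref{ayiet}. The key input is the classical theorem of Arnoux and Yoccoz that $f$ is minimal (indeed uniquely ergodic): every orbit of $f$ is infinite, so $f$ has no periodic points. Since affine conjugacy carries periodic orbits to periodic orbits, $T^2$ likewise has no periodic points; and if $T$ had a point of period $n$, that point would have period dividing $2n$ under $T^2$, a contradiction. Hence \emph{every} orbit of $T$ is aperiodic.

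Finally I would translate this back to the tiling. By Remark \ref{periodic-iet}, a trajectory is periodic or drift-periodic exactly when its orbit under the tiling billiards IET is periodic; equivalently, an aperiodic IET orbit corresponds to a trajectory that is neither periodic nor drift-periodic. Combining this with the previous step, the trajectory determined by any admissible $(X,\pi)$ is not periodic, and so by Theorem \ref{sametrajectory}(2) it escapes, for every value of $X$.

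The only genuinely external ingredient, and the step I expect to carry the real weight, is the minimality of the Arnoux-Yoccoz IET; everything else is bookkeeping. The one point demanding care is the passage from $T^2$ to $T$: minimality of $f$ gives aperiodicity of all $T^2$-orbits, and I must make sure that the absence of $T^2$-periodic points really does rule out $T$-periodic points, which it does since any $T$-periodic point is automatically $T^2$-periodic. I should also state explicitly that \emph{escaping} is the negation of \emph{periodic}, so that drift-periodic trajectories, which do not arise here anyway by the same aperiodicity, would in any case be counted as escaping.
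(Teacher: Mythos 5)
Your proposal is correct and follows essentially the same route as the paper: both reduce escaping to non-periodicity via Theorem \ref{sametrajectory}(2), and both obtain non-periodicity from the aperiodicity (minimality) of the Arnoux--Yoccoz IET together with the conjugacy of Theorem \ref{thm:ayiet}. The only difference is that you spell out the bookkeeping the paper leaves implicit --- the admissibility of the slice $\tau=\pi$ via Lemma \ref{redzone} and the passage from periodic points of $T^2$ back to periodic points of $T$ --- which is a welcome tightening rather than a different argument.
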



\begin{proof}
Let $X \in [0, 2 \pi)$ correspond to an orientation of a starting triangle. $X$ has a dense (aperiodic) orbit under the Arnoux-Yoccoz IET \cite[\S 2.2]{arnoux}. That is, the trajectory never returns to a triangle in the same orientation as the starting triangle, so it is not periodic. Thus by Theorem \ref{sametrajectory} part (\ref{nonperiodicescapes}), the trajectory is escaping.
\end{proof}

%

Using $\alpha = \frac{\pi(1-a)}{2}$ and $\beta = \frac{\pi(1-a^2)}{2}$, and approximating $\tau = \pi$ in our computer program (see footnote), we found trajectories that approach the Rauzy fractal, the first 12 of which are shown in Appendix \ref{rauzyfrac}.

\begin{remark}\label{rem:Hooper-Weiss}
Hooper and Weiss \cite{pat} depict the algebraic dynamics of rel deformations of the Arnoux-Yoccoz IET, using a Cayley graph of the subgroup of $\R/\Z$ containing the three shift values, $(1-a)/2$, $(1-a^2)/2$, $(1-a^3)/2$ (after multiplication by $2\pi$, these are equal to our $2\alpha$, $2\beta$, and $2\gamma$). Indeed, the IET given just after \cite[Proposition 4.6]{pat} is identical to our tiling billiards squared IET, after scaling, rotation, and the change of variables $r\mapsto \pi-\tau$:

In our system, changes by these values correspond to movement between positively oriented triangles in the plane. So, this Cayley graph can be found on our tiled plane, with vertices in positively oriented triangles, and our trajectories encode the same algebraic dynamics as can be seen in \cite[Figure 2]{pat}, reproduced here as Figure \ref{pat-fractal}. In particular, their real rel parameter $r$, which parametrizes their rel-deformed Arnoux-Yoccoz IETs, corresponds to the value of $\pi-\tau$. They make a corresponding observation that as $r$ gets closer to $0$, the curves corresponding to orbits on the IET fill larger regions of the Cayley graph. See the end of \cite[\S 4]{pat}.
\end{remark}

For another perspective on the closed loops obtained by changing the relative periods of the Arnoux-Yoccoz examples, see \cite[\S 5 and its two figures]{curt}.

\begin{figure}[!h]
\begin{center}
\includegraphics[height=200pt]{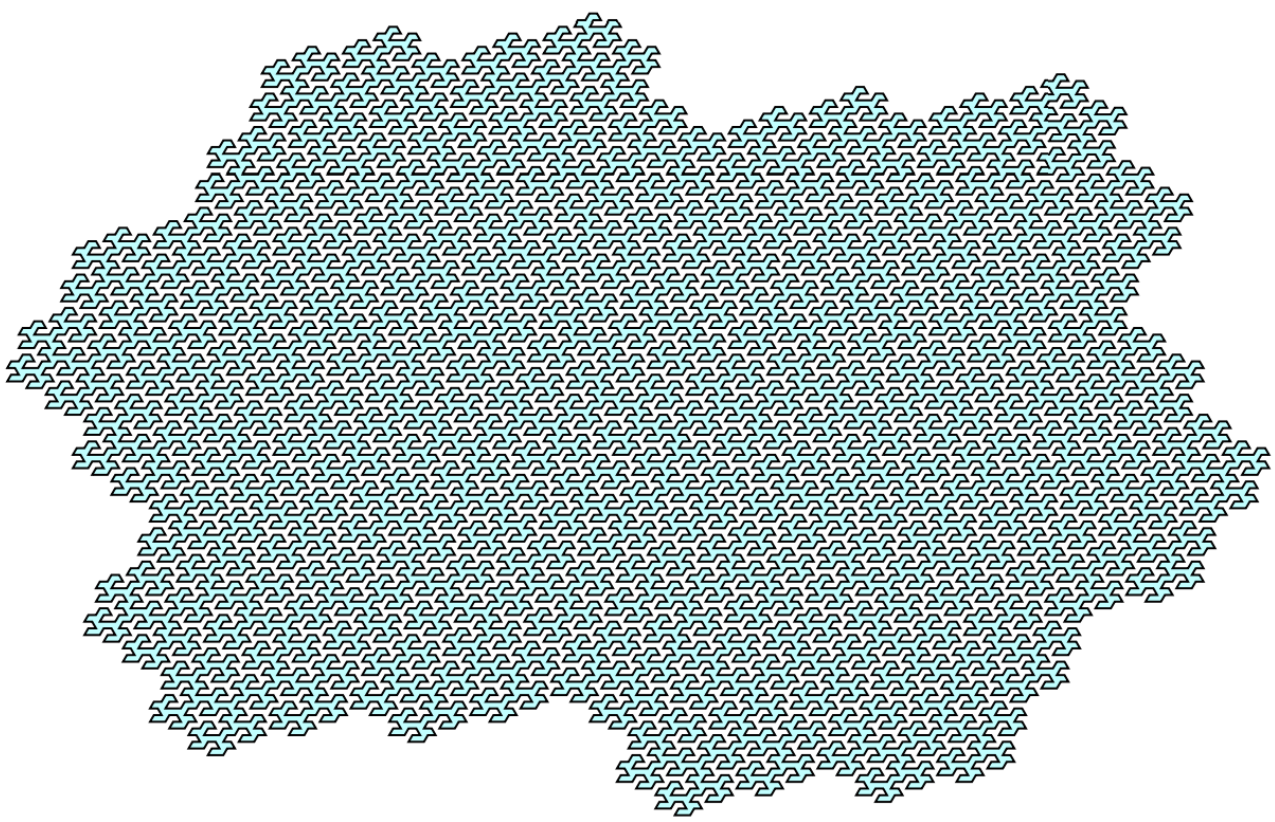}
\caption{An orbit from Hooper and Weiss's construction \cite{pat}; compare to our tiling billiards trajectories in Appendix \ref{rauzyfrac}. See Remark \ref{rem:Hooper-Weiss}. \label{pat-fractal}}
\end{center}
\end{figure}


\begin{theorem}
Consider the triangle tiling with $\alpha = \frac{\pi(1-a)}{2}$, and $\beta = \frac{\pi(1-a^2)}{2}$, in the folded position. Any chord with $\tau=\pi$ and $X\in\pi\Q(a)$ unfolds to finitely many trajectories in the tiling plane. Furthermore, some such chords unfold to a single trajectory that visits every triangle in the plane.
\end{theorem}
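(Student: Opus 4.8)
The plan is to convert the statement into one about orbits of the Arnoux--Yoccoz IET on a countable subgroup of the circle, and then to read off finiteness and minimality from the self-similar (Tribonacci) structure of that IET, following Hooper--Weiss \cite{pat} (see Remark \ref{rem:Hooper-Weiss}).

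First I would build a dictionary between positively-oriented triangles of the plane and angular positions on the circle. By Table \ref{table:moves} a path with net moves $(n_\alpha,n_\beta,n_\gamma)$ translates a positively-oriented triangle by $n_\alpha\vec A+n_\beta\vec B+n_\gamma\vec C$, while by Lemma \ref{lem:turning} it rotates the folded triangle by $2(\alpha n_\alpha+\beta n_\beta+\gamma n_\gamma)$, i.e.\ it adds this amount to $X$. Because $\vec A+\vec B+\vec C=0$ and $\alpha+\beta+\gamma=\pi$, the vector $(1,1,1)$ lies in the kernel of both maps, so each factors through $\Z^3/\langle(1,1,1)\rangle\cong\Z^2$; on $\Z^2$ the translation map is an isomorphism onto the lattice of positively-oriented triangles, and the angular map is $\phi(n_\alpha,n_\beta)=\pi[(1-a)n_\alpha+(1-a^2)n_\beta]\bmod 2\pi$. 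Since $1,a,a^2$ are linearly independent over $\Q$, the relation $(1-a)n_\alpha+(1-a^2)n_\beta\in 2\Z$ forces $n_\alpha=n_\beta=0$, so $\phi$ is injective. Hence $\phi$ identifies the positively-oriented triangles with the coset $C:=X_0+\phi(\Z^2)\subset\R/2\pi\Z$, each triangle carrying a distinct value of $X$. A chord with $\tau=\pi$ is a diameter, and since the Arnoux--Yoccoz triangle is acute its circumcenter is interior, so this diameter meets every folded triangle; the triangle meeting it with parameter $X$ is exactly the $\phi$-preimage of $X$. By Theorems \ref{billiardsiet} and \ref{thm:ayiet} a single connected trajectory is a single orbit of the Arnoux--Yoccoz IET $f$ inside $C$, and it visits a given triangle iff its orbit contains the matching point of $C$. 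The theorem therefore reduces to two assertions about $f$: for $X_0\in\pi\Q(a)$ it partitions $C$ into \emph{finitely many} orbits, and for suitable $X_0$ it partitions $C$ into a \emph{single} orbit.

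For finiteness I would first note that $X_0\in\pi\Q(a)$ makes all of $C$ lie in $\pi\Q(a)\bmod 2\pi$, since $2\alpha,2\beta,2\gamma\in\pi\Q(a)$, and that multiplication by $a$ preserves the shift group $G=\phi(\Z^2)$: one checks $a(1-a)=(1-a^2)-(1-a)$ and $a(1-a^2)\equiv-2(1-a)-(1-a^2)\bmod 2\Z$, both in $G$. The essential input is the renormalization of $f$: $a^{-1}$ is the Tribonacci number, $f$ is self-similar under rescaling the circle by $a$, and the conjugates of $a$ are the complex roots of $x^3+x^2+x-1$. Through the attendant Pisot/Rauzy-fractal correspondence for this substitution, $C$ maps into a compact abelian group on which $f$ acts as a translation, and the $f$-orbits in $C$ correspond to cosets of the finitely generated subgroup generated by that translation inside a lattice, of which there are only finitely many; since $f$ is aperiodic on $\pi\Q(a)$ these orbits are all infinite, so finitely many of them partition the infinite set $C$. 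Equivalently one may quote \cite{pat}, whose IET coincides with our squared IET after scaling and $r\mapsto\pi-\tau$ (Remark \ref{rem:Hooper-Weiss}) and whose analysis of the Cayley graph on this same group $G$ counts these orbits directly. This step is the main obstacle: everything before it is bookkeeping, whereas the orbit count genuinely requires the self-similar structure and cannot be seen from the IET combinatorics alone.

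For the single-orbit assertion I would choose $X_0\in\pi\Q(a)$ so that $C$ is a single coset on which the translation above acts transitively, i.e.\ so that the Rauzy-fractal model of the orbit fills the entire relevant lattice coset; the existence of such $X_0$ is precisely the end-of-\S4 phenomenon in \cite{pat}, where orbits fill ever larger regions of the Cayley graph as $\tau\to\pi$. For that $X_0$ the unique orbit equals $C$, so by the dictionary the corresponding trajectory passes through every positively-oriented triangle. Finally, the single-step (3-interval) tiling billiards IET of Theorem \ref{billiardsiet} maps the positive-triangle coset bijectively onto the negative-triangle coset, so covering all positive triangles forces covering all negative ones, and hence the trajectory meets every triangle of the plane, completing the proof.
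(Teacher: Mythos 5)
Your reduction of the theorem to a statement about orbits of the Arnoux--Yoccoz IET on the coset $C = X_0 + \phi(\Z^2)$ is sound, and it matches what the paper does implicitly: the paper identifies the lattice $L_P$ of Lowenstein--Poggiaspalla--Vivaldi \cite{cubic} with the collection of $X$ values of positively-oriented triangles at $\tau=\pi$, which is exactly your dictionary. The problem is the two dynamical inputs, where your argument has a genuine gap. For finiteness, your proposed mechanism --- that the $f$-orbits in $C$ ``correspond to cosets of the finitely generated subgroup generated by that translation inside a lattice, of which there are only finitely many'' --- cannot work as stated: the subgroup generated by a single translation is cyclic, and a rank-one subgroup of the rank-two group $G=\phi(\Z^2)$ has \emph{infinitely} many cosets, so this bookkeeping would predict infinitely many orbits, contradicting the very statement you are trying to prove. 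The deeper issue is that an IET orbit is not a coset of anything: different points receive different sequences of the six translations, and the semiconjugacy of the Arnoux--Yoccoz IET onto a torus translation is not injective, so orbit counts do not pass through it. The finiteness of the orbit decomposition of $L_P$ for $P\in\Q(a)$ is a nontrivial arithmetic theorem whose proof requires the scale-invariant first-return (renormalization) analysis carried out by Lowenstein--Poggiaspalla--Vivaldi; the paper's proof consists precisely of citing their Theorem 2 together with the dictionary you constructed. Your alternative suggestion to ``quote \cite{pat}'' does not fill the hole either: nothing in the paper's use of Hooper--Weiss (Remark \ref{rem:Hooper-Weiss}) indicates that \cite{pat} counts orbits on $\Q(a)$-lattices at $\tau=\pi$; their analysis concerns rel deformations with $r\neq 0$.

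The same misattribution undermines your ``single trajectory'' step. The Hooper--Weiss phenomenon you invoke --- orbits filling ever larger regions of the Cayley graph as $r\to 0$, i.e.\ as $\tau\to\pi$ --- is a statement about a family of \emph{finite} (closed) orbits at parameters $\tau\neq\pi$, and no limiting argument is offered that would produce, at $\tau=\pi$ exactly, a single infinite orbit exhausting all of $C$. The paper instead gets this from \S 4.6 of \cite{cubic}, which exhibits explicit lattices $L_P$ consisting of a single orbit. So the correct skeleton is the one you built in your first paragraph, but both substantive claims must be imported from \cite{cubic} (Theorem 2 and \S 4.6 respectively); as written, your proof of the finiteness claim is incorrect and your proof of the single-orbit claim is missing.
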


\begin{proof}
This follows from Theorem $2$ of Lowenstein, Poggiaspalla, and Vivaldi \cite{cubic}. Their work focuses on a scale-invariant first return map $\rho$ which is induced by the Arnoux-Yoccoz IET. For each point $P$ on their IET, they define a corresponding lattice $L_P$ which contains every point that differs from the starting point by a sum of translation distances (modulo 1). Theorem $2$ of \cite{cubic} states that for all points in $\Q(a)$, the corresponding lattice is composed of finitely many orbits. This immediately implies that the same is true of the Arnoux-Yoccoz IET. The lattice $L_P$ in the Arnoux-Yoccoz IET corresponds to the collection of $X$ values for positively oriented triangles with $\tau=\pi$ and $X$ equal to $P$ in a given triangle. Thus, positively oriented triangles are hit by finitely many trajectories in this unfolded trajectory. The result including negatively oriented triangles follows immediately.

\S4.6 of \cite{cubic} gives explicit examples of lattices that are composed of a single orbit.
\end{proof}


A trajectory with $\tau=\pi$ passes through the circumcenter of the tiling triangle. Approximating this as closely as possible in our computer program yields the longer and longer periodic trajectories shown in Appendix \ref{rauzyfrac}. In the limit where $\tau=\pi$, the trajectories escape, filling up a larger region of the plane.

\subsection{Trajectories corresponding to other points in the Rauzy gasket}

Recall that $a\approx 0.54$ is the real solution to the equation $x+x^2+x^3=1$. The Arnoux-Yoccoz point $(a,a^2,a^3)$ is the central point of the \emph{Rauzy gasket} (see \cite{gasket}), defined as follows.

\begin{definition}[Rauzy gasket]
Consider a point $(x_1,x_2,x_3)$ in $\mathbf{R}^3$ such that \mbox{$x_1+x_2+x_3=1$}. Apply the following algorithm:
\begin{enumerate}
\item If one of the entries, say $x_1$, is greater than the sum of the two smaller ones, $x_2+x_3$, subtract the sum of the two smaller ones from the largest, to get a new point \mbox{$(x_1-x_2-x_3,x_2,x_3)$} with positive entries.
\item Rescale so that the sum of the entries is $1$.
\end{enumerate}
The \emph{Rauzy gasket} is the set of points in $\mathbf{R}^3$ whose entries sum to $1$, and on which we can apply this algorithm infinitely many times, i.e. where step (1) is always possible.
\end{definition}

Since the Arnoux-Yoccoz point $(a,a^2,a^3)$ gives us a tiling where tiling billiards trajectories take the shape of the Rauzy fractal, we explored what the trajectories look like for other points in the Rauzy gasket. We define the corresponding tilings as follows:

\begin{definition}\label{def:rauzytriangle}
For tiling billiards, the triangle tiling associated to a point $(x_1, x_2, x_3)$ in the Rauzy gasket has angles
$\alpha = \pi(1-x_1)/2$,
$\beta = \pi(1-x_2)/2$,
$\gamma = \pi(1-x_3)/2$.
\end{definition}

The Arnoux-Yoccoz triangle (Theorem \ref{thm:ayiet}) is a special case of this formula, with $x_i = a^i$.
%
Trajectories on tilings corresponding to other points in the Rauzy gasket also look like fractals, of a different shape but with similar behavior to the pictures in Appendix \ref{rauzyfrac}, leading to the following conjectures:

\begin{conjecture}
Let $(x_1, x_2, x_3)$ be a point in the Rauzy gasket, and consider the triangle tiling with angles $\alpha,\beta,\gamma$ as in Definition \ref{def:rauzytriangle}. Then:
\begin{enumerate}
\item every trajectory through the circumcenter (i.e with $\tau=\pi$) is escaping, and \label{conj:taupi}
\item trajectories that pass increasingly close to the circumcenter (i.e. with $\tau\to\pi^-$) exhibit a fractal structure under rescaling.
\end{enumerate}
\end{conjecture}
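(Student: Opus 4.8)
The plan is to route both parts through the renormalization theory of the Rauzy gasket, exploiting the fact that at $\tau=\pi$ the orientation-preserving square of the Tiling Billiards IET of Theorem~\ref{billiardsiet} is the natural gasket-parametrized generalization of the Arnoux--Yoccoz map. First I would write this squared IET explicitly for a general gasket point $(x_1,x_2,x_3)$ with $\alpha,\beta,\gamma$ as in Definition~\ref{def:rauzytriangle}, exactly as in the proof of Theorem~\ref{thm:ayiet} but keeping $x_1,x_2,x_3$ general: the six subintervals come in three symmetric pairs of lengths $\pi x_1,\pi x_2,\pi x_3$, the shifts are $\pm 2\alpha,\pm 2\beta,\pm 2\gamma$, and the half-turn from $\tau=\pi$ is built in. Setting $(x_1,x_2,x_3)=(a,a^2,a^3)$ recovers the Arnoux--Yoccoz IET. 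The one computation to pin down is that the first-return (Rauzy) induction of this IET onto the appropriate subinterval acts on the length data $(x_1,x_2,x_3)$ precisely by the gasket step, namely subtracting the two smaller coordinates from the largest and rescaling; that is, the IET renormalizes to the IET of the subtracted point.

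For part~\ref{conj:taupi} I would then argue as follows. Since $(x_1,x_2,x_3)$ lies in the gasket, by definition the subtraction step is possible at every stage, so by the previous paragraph the squared IET is infinitely renormalizable with the length vector never degenerating. For this family, infinite renormalizability forces the Keane (no-connection) condition, hence minimality, so every orbit of the squared IET is aperiodic. Translating back, if the orbit of $X$ is aperiodic then the trajectory never revisits a triangle of the starting orientation in the same position, so it is neither periodic nor drift-periodic (Remark~\ref{periodic-iet}); in particular it is non-periodic, and Theorem~\ref{sametrajectory}(\ref{nonperiodicescapes}) then gives that it escapes. This settles part~\ref{conj:taupi} modulo the standard implication ``infinitely gasket-renormalizable $\Rightarrow$ minimal,'' which is the one genuinely technical point in this half and which I would quote from the Arnoux--Yoccoz and Rauzy-gasket literature (\cite{arnoux,gasket}).

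For part (2) the mechanism is renormalization together with the rel-deformation picture of Remark~\ref{rem:Hooper-Weiss}: decreasing $r=\pi-\tau$ toward $0$ corresponds to pushing the trajectory through deeper levels of the induction, and each induction step rescales the unfolded trajectory in the plane by the expansion constant of the corresponding substitution matrix while adding a finer layer of detail. For the self-similar case (the Arnoux--Yoccoz point, and more generally gasket points whose renormalization orbit is eventually periodic) the substitution matrix is Pisot, the associated Rauzy fractal is well defined, and I would show that the rescaled periodic trajectories with $\tau\to\pi^-$ converge, in Hausdorff distance, to that fractal -- the Arnoux--Yoccoz case being exactly the behavior already exhibited by Hooper and Weiss's orbits in \cite{pat} and by the pictures in Appendix~\ref{rauzyfrac}.

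The hard part is part (2) for a \emph{general} gasket point. Such a point has an aperiodic renormalization orbit, so there is no single self-affine set to converge to; the candidate limit is an S-adic, graph-directed fractal built from the non-stationary sequence of substitutions read off along the renormalization path. Making ``fractal structure under rescaling'' precise in this setting, and proving convergence of the rescaled trajectories, requires controlling the renormalization cocycle -- essentially a Pisot-on-average or uniform-hyperbolicity statement on the gasket analogous to the Pisot property enjoyed at the Arnoux--Yoccoz point. Establishing that hyperbolicity for almost every (let alone every) gasket point is the genuine obstacle, and is why the statement is posed as a conjecture rather than a theorem.
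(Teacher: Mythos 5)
You should first be aware that the paper does not prove this statement at all: it is stated as a conjecture, supported only by (i) the Arnoux--Yoccoz special case $(x_1,x_2,x_3)=(a,a^2,a^3)$, where Theorem~\ref{thm:ayiet} identifies the squared tiling billiards IET at $\tau=\pi$ with the Arnoux--Yoccoz IET, and the escaping property for every $X$ is then pulled from Arnoux's density result \cite[\S 2.2]{arnoux} via Theorem~\ref{sametrajectory}(\ref{nonperiodicescapes}); and (ii) computer experiments for other gasket points. The paper itself records that part~(\ref{conj:taupi}) was proved only afterwards by Hubert and Paris-Romaskevich \cite{olga}. So there is no proof in the paper to match your proposal against; the only fair comparison is with the special-case argument and with \cite{olga}.

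Measured against that, your reduction of part~(\ref{conj:taupi}) is exactly the paper's own mechanism in the special case (aperiodicity of the IET orbit $\Rightarrow$ the trajectory never returns to a triangle in the same position $\Rightarrow$ escape by Theorem~\ref{sametrajectory}), and your setup of the general-gasket squared IET (six intervals in symmetric pairs of lengths $\pi x_1,\pi x_2,\pi x_3$, shifts $\mp\pi x_i+\pi$) is correct and is in the same spirit as the renormalization eventually carried out in \cite{olga}. But as a proof it has two genuine holes, both of which you flag yourself. First, the claim that first-return induction on this family reproduces the family with parameters given by the gasket subtraction step is the entire content of the renormalization; it is a concrete computation, it is not available in \cite{arnoux} or \cite{gasket} in the form you need (the former treats only the Arnoux--Yoccoz point, the latter the gasket as a parameter set, not this IET family), and your write-up does not carry it out. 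Second, ``infinitely renormalizable $\Rightarrow$ Keane $\Rightarrow$ minimal'' is not an off-the-shelf implication here: this 6-IET has degenerate, non-generic combinatorics (lengths come in equal pairs, and it is the square of a flipped 3-IET), so genericity-based statements about Rauzy induction do not apply verbatim; note also that for part~(\ref{conj:taupi}) you only need absence of periodic orbits, which is weaker than minimality, but even that must be extracted from the induction rather than cited. Part (2) you concede is open --- even formulating the limit object along an aperiodic renormalization path is unresolved --- so the proposal should be read as a credible research program, essentially the one realized for part~(\ref{conj:taupi}) in \cite{olga}, rather than as a proof.
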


After reading an early draft of this paper, Hubert and Paris-Romaskevich prove \eqref{conj:taupi} as the title result of their paper, and they also prove that this is the only way to get a nonlinearly escaping trajectory \cite{olga}.


\section{Periodic trajectories and the Tree Conjecture} \label{sec:tree}

Previous work on tiling billiards conjectured, based on computer evidence, that periodic trajectories on triangle tilings have period $4n+2$, $n \geq 1$:

\begin{conjecture}[$4n+2$ Conjecture, from Conjecture 4.12 in \cite{icerm}]\label{pet-4n-plus-2}
Every periodic trajectory on a triangle tiling has period $4n+2$ for some positive integer $n$.
\end{conjecture}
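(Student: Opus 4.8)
The plan is to reduce the statement to a parity count on the two-edge moves of Definition~\ref{def:moves}. Since triangle orientations alternate at each edge crossing, a periodic trajectory crosses an even number $N=2m$ of edges, where $m$ is the number of two-edge moves in one period; the assertion $N=4n+2$ is then equivalent to the single claim that $m$ is \emph{odd}. So the entire theorem comes down to proving that every periodic trajectory is built from an odd number of two-edge moves, and that this number is at least $3$.

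To control the parity of $m$, I would split the moves by type. For each $i\in\{\alpha,\beta,\gamma\}$ let $p_i$ and $q_i$ count the $+i$- and $-i$-moves, so $n_i=p_i-q_i$. Because $p_i+q_i$ and $p_i-q_i$ differ by the even number $2q_i$, the total number of $i$-moves satisfies $p_i+q_i\equiv n_i\pmod 2$, and hence
\[
m=\sum_{i}(p_i+q_i)\equiv n_\alpha+n_\beta+n_\gamma \pmod 2 .
\]
Since a periodic trajectory returns to its starting triangle, Lemma~\ref{nx} gives $n_\alpha=n_\beta=n_\gamma$, so $m\equiv 3n_\alpha\equiv n_\alpha\pmod 2$. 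The theorem therefore reduces to showing that the common value $n_\alpha=n_\beta=n_\gamma$ is odd.

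The final step is to pin down this common value exactly using the geometry of the closed curve. By Theorem~\ref{sametrajectory} part~(\ref{simpleclosedcurve}) a periodic trajectory is a \emph{simple} closed curve, and by Lemma~\ref{lem:turning} together with the computation in the proof of Theorem~\ref{thm:sametriangle} its total turning over one period equals $2(\alpha n_\alpha+\beta n_\beta+\gamma n_\gamma)=2\pi n_\alpha$. I would then invoke the theorem of turning tangents (Hopf's Umlaufsatz): the total turning of the tangent direction along a simple closed curve in the plane is exactly $\pm 2\pi$. Equating $2\pi n_\alpha=\pm 2\pi$ forces $n_\alpha=\pm 1$, which is odd; combined with the previous paragraph this gives $m$ odd and hence $N=4n+2$. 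Moreover realizing $n_\alpha=n_\beta=n_\gamma=\pm1$ already requires at least one move of each type, so $m\ge 3$ and $n\ge 1$, matching the minimal period $6$ of a loop around a single vertex.

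The main obstacle is the turning-tangents step, which must be justified with care rather than quoted glibly. The trajectory is a piecewise-linear closed curve whose direction changes only at refraction points, so one must verify that the paper's ``total turning'' (the cumulative signed direction deviation of Lemma~\ref{lem:turning}) coincides with the sum of the exterior angles of this polygon, fix the orientation convention so that the two candidate signs $\pm 2\pi$ are handled uniformly, and confirm that the trajectory genuinely traverses the simple closed curve exactly once per period, so that its rotation index is $\pm 1$ rather than a larger multiple of $2\pi$. Once the turning is correctly identified as $\pm 2\pi$ and hence $n_\alpha=\pm 1$, the parity conclusion is immediate.
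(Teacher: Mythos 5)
Your proposal is correct in its essentials, and it takes a genuinely different route from the paper --- in fact it proves strictly more than the paper does. In the paper this statement is only a \emph{conjecture}: the authors give a heuristic justification via Nogueira's theory of IETs with flips \cite{nonergodic}, prove the conditional result (Proposition \ref{4n-plus-2}) that a periodic trajectory enclosing a tree with $n$ vertices has period $4n+2$ (by attributing $6$ trajectory segments to the first enclosed vertex and $4$ to each subsequent one), and obtain the unconditional statement only for obtuse tilings (Theorem \ref{thm:tree} plus Corollary \ref{cor:4n2obtuse}); the general case is deferred to Hubert--Paris-Romaskevich \cite{olga}. Your argument --- reduce to showing that the number $m$ of two-edge moves per period is odd, observe $m\equiv n_\alpha+n_\beta+n_\gamma\equiv n_\alpha\pmod 2$ via Lemma \ref{nx}, and pin down $n_\alpha=\pm 1$ by applying the polygonal Umlaufsatz to the simple closed curve guaranteed by Theorem \ref{sametrajectory}(\ref{simpleclosedcurve}) --- is sound, and the obstacle you flag is real but surmountable: Lemma \ref{lem:turning} as proved in the paper (by folding) only controls the direction change modulo $2\pi$, whereas you need the exact tangent winding. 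The upgrade is true and checkable by a wedge computation: place the move's vertex at the origin with the crossed edges along angles $0$ and $i$; for the middle segment to meet both edges at positive distance from the vertex, its direction must lie strictly between $i$ and $\pi$ (measured from the first edge), and then both exterior angles automatically lie in $(-\pi,\pi)$ with no wrap-around and sum to exactly $\pm 2i$ --- even when $2\gamma>\pi$. Summing over moves gives total turning exactly $2\pi n_\alpha$, and minimality of the period ensures the simple closed curve is traversed once, so the rotation index is $\pm 1$, giving $n_\alpha=\pm1$, hence $m$ odd and $m\ge |n_\alpha|+|n_\beta|+|n_\gamma|=3$, i.e.\ period $4n+2$ with $n\ge 1$. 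As for what each approach buys: the paper's tree-counting argument yields structural information --- the enclosed set is a tree (a path, in the obtuse case) and $n$ counts its enclosed vertices --- but within this paper it applies only where the Tree Conjecture is proved; your rotation-index argument is elementary and unconditional, settling the full conjecture (along lines close to the subsequent proof in \cite{olga}), but it identifies $n$ only as $(m-1)/2$ and says nothing about what the trajectory encloses.
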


We give further justification:

\begin{remark}[Justfication for $4n+2$ conjecture]
Since almost every trajectory on a fully flipped IET is periodic, and almost all IETs have only robust minimal components, we know that almost every fully flipped IET consists of only periodic components. As per \cite{nonergodic}, this tells us that Rauzy induction ends after finitely many steps, and the transformation has a finite expansion. In the proof of Corollary 3.3 in \cite{nonergodic}, Nogueira states that in the irrational case, a finite expansion implies that periodicity comes from flipped fixed points. Without the use of a flipped fixed point, rational relations between the interval lengths are needed to create a periodic orbit. 
In our IETs, every interval is flipped, so in order to have a flipped fixed point, there has to have been an odd number of iterations. So the periodic component returns after twice an odd number, which is a number of the form $4n+2$.
\end{remark}

After reading an early draft of this paper, Hubert and Paris-Romaskevich proved this conjecture \cite[\S 6.4 and Theorem 7]{olga}.

We conjecture a stronger result, that a periodic trajectory never encloses a triangle. If we consider the tiling as vertices and edges of a planar graph, this means that the vertices and edges enclosed by a periodic trajectory form a \emph{tree} (Figure \ref{nicetree}):

\begin{conjecture}\label{conj:only-tree}[Tree Conjecture]
A periodic trajectory on a triangle tiling encloses a tree.
\end{conjecture}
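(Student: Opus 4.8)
The plan is to reduce the topological statement to a single combinatorial claim and then attack that claim with the turning machinery already developed. First I would invoke Theorem~\ref{sametrajectory}(3) to regard a periodic trajectory as a simple closed Jordan curve $\gamma$ bounding a compact region $R$, and let $G$ be the subgraph of the tiling consisting of all vertices and edges lying in the interior of $R$. Since $R$ is connected and the tiling is a fine triangulation, I expect connectedness of $G$ to follow from a short separation argument (any two enclosed vertices are joined by a chain of enclosed edges). Because the only bounded faces of a triangle tiling are the tiles themselves, $G$ contains a cycle if and only if some tile is entirely enclosed by $\gamma$: the three edges of an enclosed tile form a $3$-cycle, and conversely any cycle of $G$ bounds a subregion of $R$ containing at least one complete tile. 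Hence ``$G$ is a tree'' is equivalent to the single assertion that \emph{no tile is enclosed by $\gamma$}, and the whole conjecture reduces to this.

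The key quantitative input I would use is a turning identity. A simple closed curve has total turning $\pm 2\pi$ by the classical theorem of turning tangents, while Lemma~\ref{lem:turning} evaluates the total turning of a closed trajectory as $2(\alpha n_\alpha+\beta n_\beta+\gamma n_\gamma)$, and Lemma~\ref{nx} forces $n_\alpha=n_\beta=n_\gamma$ for a trajectory that returns to its starting tile. Combining these, exactly as in the proof of Theorem~\ref{thm:sametriangle}, gives $n_\alpha=n_\beta=n_\gamma=\pm 1$: a periodic trajectory makes precisely one net revolution. This is consistent with the minimal period-$6$ loop $\overline{CBACBA}$, which performs one $\alpha$-, one $\beta$-, and one $\gamma$-move and encircles a single vertex (a one-vertex tree). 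For longer trajectories the net counts are still $1$, so the additional moves must occur in cancelling $+$/$-$ pairs.

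The crux, and the step I expect to be the main obstacle, is converting the global winding constraint $n_\alpha=n_\beta=n_\gamma=1$ into the local fact that no tile is enclosed. The difficulty is exactly the cancellation just noted: the net turning alone does not pin down the topology of the enclosed cluster, since a long trajectory may wind forward and backward around many vertices. I would attempt a discrete Gauss--Bonnet / Euler-characteristic count on the enclosed complex, matching the move sequence against the enclosed vertices and edges so as to force $E=V-1$ (no bounded faces); equivalently, I would try to show that an enclosed tile would compel either a self-crossing of $\gamma$, contradicting simplicity via Theorem~\ref{thm:sametriangle}, or an imbalance $n_i\neq 1$. In the obtuse case this becomes tractable: for an obtuse triangle the largest angle satisfies $2\gamma>\pi$, and Proposition~\ref{prop:onp} together with the PET structure of Theorem~\ref{thm:pet} sharply restricts which of the six moves can appear in a periodic trajectory. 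This collapse of the move repertoire is what should let one not only rule out enclosed tiles but also bound the degree of each enclosed vertex by $2$, yielding the stronger ``path'' conclusion. For acute tilings all six moves coexist freely, and the orbit combinatorics of the associated IET would have to be understood in full, which is why I expect the general statement to resist this approach and remain conjectural.
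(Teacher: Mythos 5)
You were asked about a statement that the paper itself leaves as a conjecture: the authors prove it only for obtuse tilings (Theorem~\ref{thm:tree}, which gives the stronger ``path'' conclusion), and your closing admission that the general case would remain conjectural under your approach is therefore consistent with the paper. Your preliminary reductions are sound and essentially the ones the paper uses implicitly: a periodic trajectory is a simple closed curve (Theorem~\ref{sametrajectory}, part (3)); the enclosed subgraph is a tree if and only if no tile is entirely enclosed; and combining Lemma~\ref{lem:turning}, Lemma~\ref{nx}, and the Umlaufsatz does yield $n_\alpha=n_\beta=n_\gamma=\pm 1$. That winding identity is correct and is not stated explicitly in the paper, but, as you yourself note, it is too weak to finish: cancelling pairs of moves leave the enclosed topology unconstrained, and your proposed discrete Gauss--Bonnet count is precisely the missing step, not a completed one. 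So as a proof of the conjecture the proposal has a genuine, acknowledged gap.

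The more concrete divergence is in the obtuse case, where you sketch a route that is not the paper's and that I do not think closes on its own. You hope that Proposition~\ref{prop:onp} and the PET ``collapse of the move repertoire'' will rule out enclosed tiles and bound enclosed-vertex degrees combinatorially. The paper does use move restrictions (Lemma~\ref{gamma-obtuse}), but only to pin down the rotational sense of a periodic trajectory; restricting which moves occur says nothing directly about which edges end up inside the curve. The decisive step in the paper is geometric and exploits the folding invariant of \S\ref{sec:basic}: by Corollary~\ref{cor:small_side}, the region enclosed by a periodic trajectory on an obtuse tiling lies, in the folded picture, on the \emph{small} side of the chord. Since a whole triangle cannot fit on that small side, any enclosed edge forces the folded chord to cross the triangle on the far side of that edge; a non-self-intersection argument (two segments on the same chord must belong to the same trajectory) then shows that triangle is crossed by the trajectory itself. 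Hence no tile is enclosed, and the observation that the short chord cannot enclose a long edge of any triangle upgrades ``tree'' to ``path.'' If you want to complete your program in the obtuse case, this small-side-of-the-chord argument is the idea your sketch is missing; the PET combinatorics you cite cannot substitute for it.
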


\begin{figure}[!h]
\begin{center}
\includegraphics[width=0.5\textwidth]{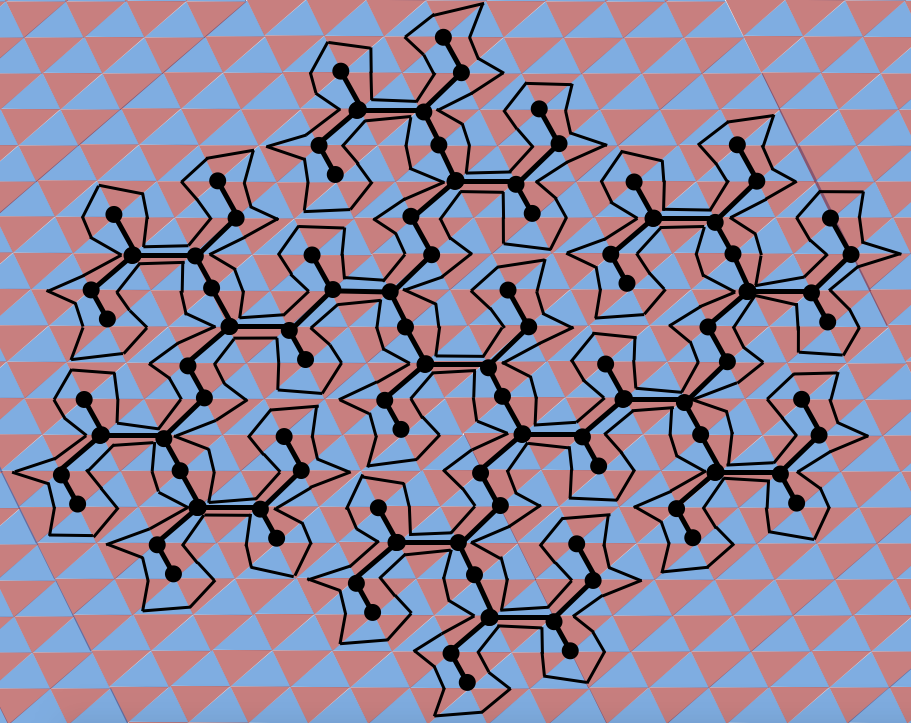}
\caption{A periodic path (thin) enclosing a tree (thick) of vertices and edges of the tiling. This is the eighth Rauzy fractal from Appendix \ref{rauzyfrac}. \label{nicetree}}
\end{center}
\end{figure}

In Theorem \ref{thm:tree}, we show that a periodic trajectory on an \emph{obtuse} triangle tiling encloses a tree (in fact, a path), proving the Tree Conjecture for the obtuse case.
In Proposition \ref{4n-plus-2}, we show that the $4n+2$ Conjecture follows from the Tree Conjecture, thus proving the $4n+2$ Conjecture for the obtuse case as well. In addition, we give meaning to the number $n$ in the expression $4n+2$, as the number of vertices enclosed by the periodic path.

\subsection{A periodic trajectory on an obtuse triangle tiling encloses a tree}
We thank Dylan Thurston for suggesting the reasoning in this section, that the region enclosed by a periodic trajectory is on the ``small side'' of the chord in the folded position. See Figure \ref{fig:hands}(c).

Recall from Definition \ref{def:moves} and Table \ref{table:moves} that a $\gamma$-move corresponds to edge crossings $BA$ and a $-\gamma$-move corresponds to edge crossings $AB$, each with corresponding regions in the PET (Figure \ref{PETcolor}).

\begin{lemma}\label{gamma-obtuse}
Given a trajectory on an obtuse triangle tiling with $\tau>2\gamma$ (symmetric results hold for $\tau<2\alpha+2\beta$):
\begin{enumerate}\item The trajectory has no $-\gamma$-moves.
\item If $\alpha/\beta$ is irrational, then the trajectory has a $\gamma$-move.
\item If the trajectory has a $\gamma$-move, then it has infinitely many $\gamma$-moves.
\item If the trajectory has a $\gamma$-move, then it is periodic with exactly one $\gamma$-move per period.
\end{enumerate}
\end{lemma}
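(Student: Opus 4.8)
The plan is to prove (1) by a direct inequality and to reduce (2)--(4) to the behavior of rigid rotations via the squared move map $P$ of Theorem~\ref{thm:pet}. Obtuseness of $\gamma$ together with $\alpha+\beta+\gamma=\pi$ gives the basic inequality $2\alpha+2\beta=2\pi-2\gamma<2\gamma<\tau$, which drives the whole argument. For (1): a $-\gamma$-move requires $2\gamma<X<2\gamma+2\alpha$ and $\tau<X-2\gamma+2\beta$; but $X<2\gamma+2\alpha$ forces $X-2\gamma+2\beta<2\alpha+2\beta<\tau$, contradicting the second condition, so no $-\gamma$-move can occur.

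Partition the circle into $I_C=(0,2\gamma)$, $I_A=(2\gamma,2\gamma+2\alpha)$, $I_B=(2\gamma+2\alpha,2\pi)$ and set $J=I_A\cup I_B$. Reading $P$ off Theorem~\ref{thm:pet} and using (1), the allowed moves have the following structure when $\tau>2\gamma$: from $I_A$ one must make a $\beta$-move (landing in $J$); from $I_B$ one makes a $-\alpha$-move (staying in $J$) or a $\gamma$-move, which is the \emph{only} passage $J\to I_C$; from $I_C$ one $\alpha$- or $-\beta$-moves, staying in $I_C$ or passing to $J$ by an $\alpha$-move. Modulo $m:=2\alpha+2\beta$ (note $2\gamma\equiv0$, $2\alpha\equiv-2\beta$, and a $\gamma$-move shifts $X$ by $2\gamma-2\pi\equiv0$), all moves internal to $J$ act as rotation by $-2\alpha$ and all moves internal to $I_C$ act as rotation by $+2\alpha$; so on each ``sheet'' $P$ is a rigid rotation, and each sheet carries a ``gate'' of common length $\tau-2\gamma$ whose entry triggers the crossing to the other sheet. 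For (2), suppose a trajectory makes no $\gamma$-move; by (1) it makes no $-\gamma$-move, so the passage $J\to I_C$ is never used, and once in $J$ the orbit stays there as a rotation by $-2\alpha$ on the circle of circumference $m$. If $\alpha/\beta$ is irrational then $2\alpha/m=\alpha/(\alpha+\beta)$ is irrational, the rotation is minimal, and the orbit must meet the $\gamma$-gate (length $\tau-2\gamma>0$), producing a $\gamma$-move---a contradiction; orbits trapped forever in $I_C$ are governed by a rotation there and, in the only surviving case, are periodic rather than aperiodic. This proves (2) by contraposition.

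For (4), and hence (3), the claim is equivalent to showing that the first-return map $\Phi$ of $P$ to the $\gamma$-gate $G_J\subset I_B$ is the identity: each orbit entering $G_J$ returns to the \emph{same} point after exactly one excursion through each sheet. The mechanism is a mirror symmetry between the sheets: after a $\gamma$-move the orbit rotates by $+2\alpha$ across $I_C$ until it reaches that sheet's gate, crosses into $J$, and rotates by $-2\alpha$; because the two gates have equal length $\tau-2\gamma$ and the rotation speeds are equal and opposite, the $J$-excursion retraces the $I_C$-excursion in reverse, giving $\Phi=\mathrm{id}$. This realizes the reversibility of the flipped IET of Theorem~\ref{billiardsiet} and explains the palindromic edge sequences of Figure~\ref{palindrome}. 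Consequently a trajectory with a $\gamma$-move is periodic with exactly one $\gamma$-move per period, which is (4); being periodic, it repeats this single $\gamma$-move forever, yielding the infinitely many $\gamma$-moves of (3).

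The main obstacle is proving $\Phi=\mathrm{id}$ in (4): the two sheets are rotations of \emph{different} circumferences, $2\pi-\tau$ and $m=2\alpha+2\beta$, so the two excursions take different numbers of steps, and one must track the exit and re-entry maps precisely to see that the compositions cancel---equivalently, identify the reversing reflection of the orientation-reversing IET and check that entering a gate lands the orbit on its fixed locus. A secondary technical point, present already in (1) and the setup, is that the exact pieces of $P$ and the disallowed regions of Lemma~\ref{redzone} depend on the position of $\tau$ within $(2\gamma,2\pi)$, so the gate lengths and the sheet decomposition must be verified across the relevant subcases; the inequality $2\alpha+2\beta<2\gamma<\tau$ is exactly what makes each subcase go through.
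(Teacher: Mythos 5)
Your part (1) is correct and is the paper's argument. Your part (2) is also essentially the paper's argument: away from $\gamma$-moves the squared dynamics on $J=I_A\cup I_B$ is the rotation by $-2\alpha$ on the circle of circumference $m=2\alpha+2\beta$ obtained by gluing the endpoints of $J$, and an irrational rotation cannot avoid the open gate. (Your handling of orbits trapped in $I_C$ is muddled, though: the internal $I_C$ dynamics is the \emph{restriction} of the rotation by $+2\alpha$ of an $m$-circle to a window of length $2\pi-\tau<m$, so in the irrational case such orbits simply cannot exist, by minimality; they are not ``periodic rather than aperiodic.'')

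The genuine gap is part (4), on which your part (3) also rests. Reformulating (4) as ``the first-return map $\Phi$ of the squared PET to the $\gamma$-gate is the identity'' is fine, but your justification is only the assertion of a ``mirror symmetry,'' and you concede in your last paragraph that one must still ``track the exit and re-entry maps precisely to see that the compositions cancel''---that unperformed computation \emph{is} the content of (4), so nothing has been proved. Moreover, the framework in which you would do it contains errors: $2\gamma\not\equiv 0\pmod m$ (indeed $2\gamma=2\pi-m$ and $2\pi$ is not a multiple of $m$), so ``reducing mod $m$'' is not well defined on the ambient circle; what is true is that the $\gamma$-move shifts by $2\gamma-2\pi=-m$, so mod-$m$ coordinates can be set up sheet by sheet, but the two sheets then differ by the offset $2\gamma\bmod m\neq 0$, which is exactly where the claimed cancellation needs checking. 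The common gate length is not $\tau-2\gamma$ in general but $\min(\tau-2\gamma,\,2\alpha,\,2\pi-\tau)$ (your value holds only when $2\gamma<\tau<2\gamma+2\alpha$), and your two descriptions of the sheets are mutually inconsistent: in paragraph two both sheets rotate ``mod $m$,'' while in paragraph four they are ``rotations of different circumferences $2\pi-\tau$ and $m$''; the latter is false, since $+2\alpha$ and $-2\beta$ do not agree modulo $2\pi-\tau$. The paper avoids this entire computation: its proof of (4) is planar-geometric rather than IET-internal. One tracks whether the trajectory moves up or down; a switch of vertical direction forces a $\pm\gamma$-move, part (1) rules out $-\gamma$-moves so every switch turns counter-clockwise, and a second $\gamma$-move before closing up would force the trajectory to cross itself or to spiral inward, both impossible by Theorem \ref{thm:sametriangle} and Corollary \ref{nospiralnodense}; hence the trajectory closes after one $\gamma$-move, giving (4), with (3) proved separately (in the rational case by a GCD argument). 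So either complete your $\Phi=\mathrm{id}$ verification across the three ranges of $\tau$---including the entry map ($\gamma$-move) and exit map ($\alpha$-move into $I_A$), which your sketch never pins down---or replace your part (4) by the paper's geometric argument.
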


\begin{proof}
\begin{enumerate}
\item This is straightforward from the PET and the definition of an obtuse triangle, as a $-\gamma$-move requires $\tau<2\alpha+2\beta$, and we have \mbox{$\tau>2\gamma>2\alpha+2\beta$}.

\item Let $\alpha/\beta$ be irrational. We show that a trajectory starting with $2\gamma<X<2\pi$ eventually reaches $0<X<2\gamma$, and that a trajectory with $0<X<2\gamma$ eventually finds $2\gamma+2\alpha<X<\tau+2\alpha$, corresponding to a $\gamma$-move. A trajectory with $2\gamma<X<2\pi$ can have $\alpha$-moves and $-\beta$-moves. This corresponds to a rotation of $2\alpha$ through the interval between $\tau-2\alpha-2\beta$ and $\tau$. By our assumption, this rotation is irrational, and the region between $2\gamma$ and $2\tau$ is eventually hit. A similar argument (for irrational rotation with $\beta$-moves and $-\alpha$-moves) shows that a trajectory with $2\gamma<X<2\pi$ will eventually hit the region between $2\gamma+2\alpha$ and $\tau+2\alpha$.

\item We claim that if a trajectory has a $\gamma$-move, then it will have another $\gamma$-move, and hence infinitely many. If $\alpha/\beta$ is irrational, the above argument suffices. Assume $\alpha/\beta$ is rational. From the PET, we find that after the $\gamma$-move, we have $2\gamma-2\beta<X<\tau-2\beta$. From here, rotating by $2\alpha$ cannot continue forever, since $X+2\beta$ would eventually be hit. Once we have $X<2\alpha+2\beta$, we know that the distance to our starting point is divisible by the GCD of $2\alpha$ and $2\beta$, so the same argument proves that the trajectory must return to the interval $2\gamma+2\alpha<X<\tau+2\alpha$, corresponding to another $\gamma$-move.

\item Without loss of generality, suppose the trajectory starts in a positively-oriented triangle. We will keep track of whether the trajectory is moving upwards or downwards (since no edge is vertical, crossing any edge corresponds to going either up or down). We will show that we only change vertical directions twice before retracing our path.
To switch from going up to going down, the trajectory must make a $\pm\gamma$-move (see Figure \ref{fig:gamma_moves}). Symmetrically, to switch from going down to going up, there must be a $\pm\gamma$-move when considering the upside-down triangles instead. Since there are no $-\gamma$-moves, every time we switch directions, we do so by going counter-clockwise. Suppose that more than one $\gamma$-move is made without the trajectory closing up. Then the trajectory must either cross itself (not allowed), or spiral indefinitely (not allowed), so this is impossible. Thus, it is periodic after one $\gamma$-move.
\end{enumerate}
\end{proof}

\begin{figure}[!h]
\begin{center}
\includegraphics[height=300pt]{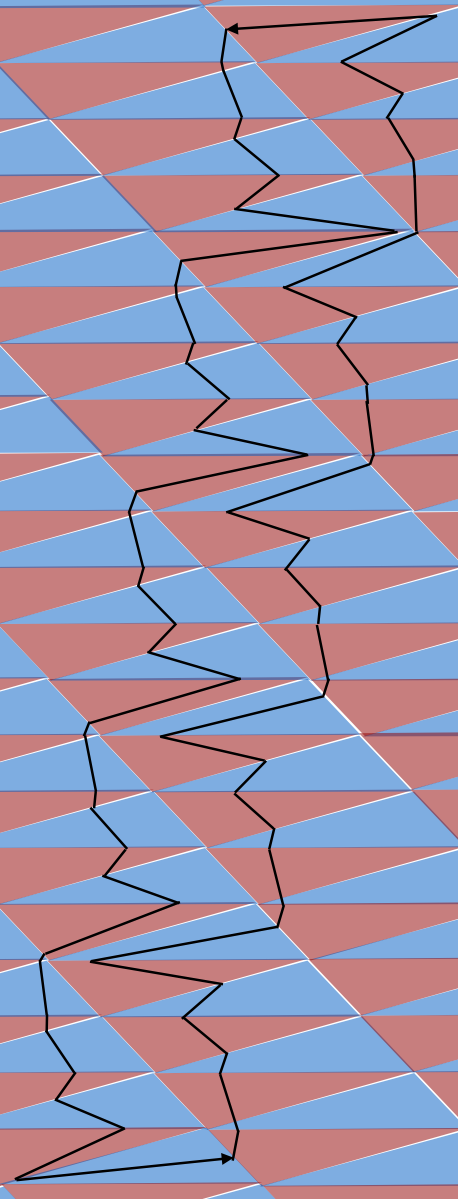} \hspace{1cm}
\includegraphics[height=300pt]{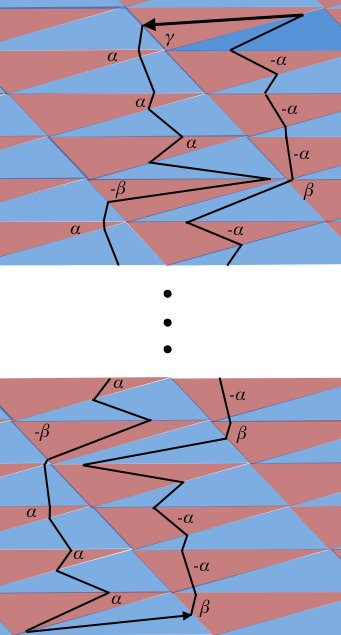}
\caption{(left) A closed trajectory on an obtuse triangle tiling has (right) at most one $\gamma$-move (here, at the top). On the right, moves are labeled. \label{fig:gamma_moves}}
\end{center}
\end{figure}

\begin{corollary}\label{cor:obtuse-per-dp}
For an obtuse triangle tiling, if $\tau>2\gamma$ or $\tau<2\alpha+2\beta$, then the trajectory is either periodic or drift-periodic.
\end{corollary}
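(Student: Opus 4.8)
The plan is to deduce Corollary \ref{cor:obtuse-per-dp} directly from Lemma \ref{gamma-obtuse}, handling the two symmetric cases $\tau>2\gamma$ and $\tau<2\alpha+2\beta$ in parallel. Working with $\tau>2\gamma$ (the other case follows by the symmetry already noted in the statement of Lemma \ref{gamma-obtuse}), I would split into two cases according to whether the trajectory ever makes a $\gamma$-move. If the trajectory \emph{does} make a $\gamma$-move, then part (4) of Lemma \ref{gamma-obtuse} immediately tells us that the trajectory is periodic, and we are done in this case. So the only work is to show that a trajectory with \emph{no} $\gamma$-move is drift-periodic.

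For the no-$\gamma$-move case, the first step is to observe (via part (2) of Lemma \ref{gamma-obtuse}, in contrapositive form) that the absence of a $\gamma$-move forces $\alpha/\beta$ to be rational. By part (1) there are also no $-\gamma$-moves, so $n_\gamma=0$ throughout and the trajectory lives entirely among $\pm\alpha$- and $\pm\beta$-moves. The key structural point is that, in the PET description (Theorem \ref{thm:pet}) restricted to the regime $\tau>2\gamma$ with $\gamma$-moves excluded, the dynamics of $X$ reduces to a rotation by $2\alpha$ on a circle of circumference $2\alpha+2\beta$ — exactly the reduction used in the proof of Proposition \ref{prop:onp} and invoked again in part (3) of Lemma \ref{gamma-obtuse}. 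With $\alpha/\beta$ rational, this rotation has rational rotation number, so every orbit is periodic on the circle.

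The next step is to translate periodicity of the reduced circle rotation back into a statement about the tiling billiards trajectory. By Remark \ref{periodic-iet}, periodicity of the IET orbit corresponds to the trajectory being either periodic or drift-periodic in the plane; since we are in the branch with $n_\gamma=0$ and (by hypothesis) not all of $n_\alpha,n_\beta,n_\gamma$ equal, Lemma \ref{nx} shows the trajectory cannot close up in the same triangle, ruling out honest periodicity and leaving drift-periodicity. Combining the two cases gives the corollary.

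The main obstacle I anticipate is the bookkeeping in the no-$\gamma$-move case: one must be careful that excluding $\gamma$-moves genuinely collapses the relevant slice of the six-branch PET to a clean two-interval rotation, and that the rational rotation number really yields a closed orbit on the circle (rather than merely a bounded one). The cleanest route is to cite the reduction already established in Proposition \ref{prop:onp} and Lemma \ref{gamma-obtuse}(3) verbatim, rather than re-deriving the interval lengths, and then to lean on the standard fact that a rigid circle rotation by a rational angle is periodic. Everything else is a direct appeal to the lemmas above.
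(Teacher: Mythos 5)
Your proof is correct and is essentially the paper's own argument: both rest on parts (1), (2), and (4) of Lemma \ref{gamma-obtuse} together with the observation that, absent $\pm\gamma$-moves, the orbit follows a rational rotation in the PET, the only difference being that you split cases on the presence of a $\gamma$-move while the paper splits on the rationality of $\alpha/\beta$ --- logically the same decomposition, via part (2). One small slip: in your final step the phrase ``by hypothesis not all of $n_\alpha$, $n_\beta$, $n_\gamma$ are equal'' is unjustified as written (the correct reason is that in this regime every move is an $\alpha$-move or a $-\beta$-move, so $n_\alpha - n_\beta \geq 1$ and Lemma \ref{nx} forbids closing up), but that entire step is superfluous anyway, since the corollary asserts only the disjunction ``periodic or drift-periodic,'' which Remark \ref{periodic-iet} already delivers once the rotation is rational.
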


\begin{proof}
If $\alpha/\beta$ is irrational, then combining (2) with (4) gives that the trajectory is periodic. If $\alpha/\beta$ is rational, and the trajectory is not periodic, then there are no $\pm\gamma$-moves, by part (1) and the contrapositive of part (4). In this case, the point corresponding to the trajectory follows a rational rotation in the PET, so it must be either periodic or drift-periodic. Since we assumed it is not periodic, it is drift-periodic.
\end{proof}

\begin{corollary}\label{cor:small_side}
For a periodic trajectory on an obtuse triangle tiling, the region enclosed by the trajectory lies, in the folded position, on the small side of the chord in the circumscribing circle.
\end{corollary}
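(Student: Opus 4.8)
The plan is to fix an orientation on the trajectory and then track, triangle by triangle, which side of the chord the enclosed region lands on in the folded picture, comparing it to the side containing the circumcenter. First I would reduce to the case $\tau > 2\gamma$ (the case $\tau < 2\alpha+2\beta$ being symmetric): since $\gamma$ is obtuse we have $\tau > 2\gamma > \pi$, so the perpendicular distance from the circumcenter to the chord is $|\cos(\tau/2)|$ with the circumcenter lying on the \emph{major} segment. Hence the small side of the chord is precisely the minor segment, the one \emph{not} containing the circumcenter. By Theorem \ref{sametrajectory}(3) the trajectory is a simple closed curve, so its total turning is $\pm 2\pi$; combining Lemmas \ref{nx} and \ref{lem:turning} this forces $n_\alpha = n_\beta = n_\gamma = \pm 1$, and since Lemma \ref{gamma-obtuse} gives only counterclockwise $\gamma$-moves when $\tau > 2\gamma$, the turning is $+2\pi$. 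Thus the curve is oriented counterclockwise and its enclosed region lies to the left of the direction of travel.

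Next I would carry out the orientation bookkeeping through the folding. Each triangle $T$ crossed by the trajectory is carried into the common circumscribing circle by an isometry $F_T$ built from edge-reflections (Corollary \ref{allincircle}), sending the trajectory segment of $T$ onto the fixed chord. For a positively-oriented (blue) triangle, $F_T$ is a rotation about the circumcenter (an even number of reflections, Lemma \ref{lem:rotations}), hence orientation-preserving; for a negatively-oriented (red) triangle, $F_T$ is orientation-reversing. As in the proof of Lemma \ref{taufixed}, folding across an edge reverses the direction of the trajectory segment, so in the folded picture all blue forward-segments point one way along the chord and all red forward-segments point the opposite way. I would then verify that these two sign changes cancel: on a blue triangle the orientation-preserving $F_T$ sends ``left of forward'' to the minor segment, while on a red triangle the orientation reversal of $F_T$ combined with the reversed forward direction again sends ``left of forward'' to the same minor segment. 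Concretely, placing the chord vertically with the circumcenter to its right, ``left of the upward direction'' and ``right of the downward direction'' both point toward the minor side, so every crossed triangle contributes its enclosed portion to the small side.

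Assembling these local statements, the portion of the enclosed region lying in each triangle crossed by the trajectory folds onto the small side of the chord; since these portions cover a neighborhood of the trajectory on its interior side, the enclosed region lies on the small side, which is the corollary. The case $\tau < 2\alpha+2\beta$ is identical with all orientations reversed: the trajectory is then clockwise, and the small side is again the minor segment opposite the circumcenter.

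I expect the main obstacle to be precisely the orientation bookkeeping in the middle step. One must simultaneously keep straight three independent sign conventions — the counterclockwise orientation of the closed curve, the alternation between orientation-preserving (blue) and orientation-reversing (red) folding maps $F_T$, and the reversal of the trajectory's forward direction at each fold — and confirm that they conspire to place the interior consistently on the minor segment, rather than cancelling incorrectly and yielding opposite sides on the blue and red triangles. Getting this cancellation right, and correctly identifying the minor segment as the circumcenter-free side via $\tau > \pi$, is the crux; once established, the rest is immediate.
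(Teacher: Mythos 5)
Your proof is correct and takes essentially the same route as the paper's: Lemma \ref{gamma-obtuse}(1) forces the trajectory to travel counterclockwise when $\tau>2\gamma$, the enclosed region therefore lies to the left of the direction of travel, and since $\tau>\pi$ the left side of the oriented chord is the small side. The paper's proof is a condensed version of yours; your turning-number derivation ($n_\alpha=n_\beta=n_\gamma=+1$ via the Umlaufsatz and the absence of $-\gamma$-moves) and the blue/red orientation bookkeeping through the folding maps are details the paper asserts without spelling out.
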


\begin{proof}
By Lemma \ref{gamma-obtuse} (1), $\tau>2\gamma$ means that the trajectory is traveling counter-clockwise, and $\tau<2\alpha+2\beta$ means that the trajectory is traveling clockwise. For a counter-clockwise trajectory, the enclosed region is on the left. Since $\tau$ is large, the region on the left is the small side of the chord.
\end{proof}

\begin{theorem}\label{thm:tree}
A periodic trajectory on an obtuse triangle tiling encloses a path.
\end{theorem}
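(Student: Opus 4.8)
The plan is to work in the unfolded plane using the move structure from Lemma \ref{gamma-obtuse}, and to certify acyclicity in the folded circle. First I would reduce to $\tau>2\gamma$: the case $\tau<2\alpha+2\beta$ is symmetric, and by Proposition \ref{prop:onp} a periodic trajectory cannot have $2\alpha+2\beta<\tau<2\gamma$. Since $\gamma>\pi/2$ for an obtuse triangle, $\tau>2\gamma>\pi$, which is exactly what makes ``the small side'' of the chord the minor segment. By Lemma \ref{gamma-obtuse} the trajectory has no $-\gamma$-move and exactly one $\gamma$-move per period, so (via Lemma \ref{nx}, $n_\gamma=1$ forces $n_\alpha=n_\beta=1$).

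Next I would record a \emph{direction dictionary} from the translation vectors in Table \ref{table:moves}: with $C$ horizontal and the $\gamma$-vertex at the top, $\vec{A}$ and $\vec{B}$ point upward while $\vec{C}$ is horizontal, so $\alpha$- and $\beta$-moves strictly raise the height, $-\alpha$- and $-\beta$-moves strictly lower it, and the unique $\gamma$-move is horizontal. Together with the two-turnaround count in the proof of Lemma \ref{gamma-obtuse}(4), this splits one period into a monotone ascending run of $\alpha$- and $\beta$-moves, the single horizontal $\gamma$-move at the top, a monotone descending run of $-\alpha$- and $-\beta$-moves, and a horizontal bottom turnaround. Each of the two arcs therefore crosses every intermediate horizontal line of the tiling exactly once, so the enclosed region is \emph{vertically convex}: it meets each horizontal line in a single interval, and hence meets each row of tiling vertices in a single contiguous block.

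To see that the enclosed graph is a \emph{tree}, I would rule out enclosed faces using Corollary \ref{cor:small_side}. A cycle in the enclosed graph would surround an entire tiling triangle $T$; in the folded position $T$ is inscribed in the circle and, being enclosed, lies on the small side of the chord. But the three vertices of any inscribed copy of the triangle cut the circle into arcs $2\alpha,2\beta,2\gamma$, so the shortest arc containing all three measures $2\pi-2\gamma$, whereas the minor segment on the small side spans only $2\pi-\tau<2\pi-2\gamma$. Thus no inscribed triangle fits on the small side, no face is enclosed, and the enclosed graph is acyclic; since the interior of the simple closed curve (Theorem \ref{sametrajectory}) is connected, the graph is a tree.

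Finally I would upgrade \emph{tree} to \emph{path} by forbidding any vertex of degree $\ge 3$, and this no-branching step is where I expect the real difficulty. Vertical convexity already handles two branches leaving a vertex on the same vertical side: if $v$ had two enclosed edges going up to $w_1,w_2$ in the row above, convexity would force the whole interval between $w_1$ and $w_2$ (and the $C$-edges joining them) to be enclosed, producing a second $v$--$w_1$--$\cdots$--$w_2$ route and hence a cycle, contradicting acyclicity (and symmetrically downward). The remaining, subtler case is a mid-height horizontal prong, a degree-$3$ vertex with a left edge, a right edge, and one vertical edge. I would attack this with the single-turnaround structure: such a prong forces the bounding monotone arc to reach past the prong's tip and retreat within the adjacent rows while excluding the prong's vertical neighbors, and I would show this requires an extra reversal of one of the monotone arcs, contradicting that the ascending and descending runs each turn around only once (at the unique $\gamma$-move and the unique bottom turnaround). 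Making this last exclusion precise, by pinning down exactly how the two monotone arcs sit relative to the tiling vertices of each row, is the main obstacle; once it is done, $\deg\le 2$ together with connectivity gives a path, with the number of enclosed vertices playing the role of $n$ in the later $4n+2$ count.
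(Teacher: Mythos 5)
Your proposal has two genuine problems, one factual and one structural. The factual one: your ``direction dictionary'' is wrong. The translation vectors in Table \ref{table:moves} satisfy $\vec{A}+\vec{B}+\vec{C}=0$, so with $\vec{C}$ horizontal, $\vec{A}$ and $\vec{B}$ have \emph{opposite} vertical components: an $\alpha$-move ($CB$) begins by crossing the horizontal bottom edge of a positively-oriented triangle and moves strictly downward, while a $\beta$-move ($AC$) ends by crossing the adjacent negative triangle's top edge and moves strictly upward. So there is no ``monotone ascending run of $\alpha$- and $\beta$-moves''; the ascending run actually consists of $\beta$- and $-\alpha$-moves and the descending run of $\alpha$- and $-\beta$-moves. (Vertical convexity itself can be salvaged from the two-turnaround count in the proof of Lemma \ref{gamma-obtuse}(4), but not from the dictionary as you state it.) The structural problem is the one you flag yourself: the ``mid-height horizontal prong'' case of the degree bound is left unproved, and that case is exactly the difference between ``tree'' and ``path,'' so the proposal does not prove the theorem. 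There is also a soft spot in your cycle exclusion: you apply Corollary \ref{cor:small_side} to a fully enclosed, uncrossed triangle, but the proof of that corollary only locates the enclosed portions of \emph{crossed} triangles (it argues from the orientation of travel along the trajectory); for an uncrossed triangle its conclusion is essentially the statement you are trying to prove, so this use is borderline circular.

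The paper closes all of this with a single folding argument, which is worth seeing because it also dissolves your prong case. Given an enclosed edge $e$ of a crossed triangle, fold: $e$ lies on the small side of the chord (the legitimate use of Corollary \ref{cor:small_side}); the triangle on the far side of $e$ is inscribed in the circle, and an inscribed triangle needs an arc of $2\pi-2\gamma>2\pi-\tau$, so it cannot fit on the small side and must meet the chord; hence it is crossed by a trajectory with the same chord, which must be the \emph{same} trajectory since $e$ is interior to both and distinct trajectories sharing a chord cannot intersect. Consequently every triangle adjacent to an enclosed edge is crossed: no triangle is ever enclosed (this gives the tree without invoking the corollary for uncrossed triangles), and, since a crossed triangle has two edges crossed, each triangle has at most one enclosed edge. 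Moreover the chord has length $2\sin(\tau/2)<2\sin\gamma=|C|$ (as $\tau/2>\gamma>\pi/2$ and $\sin$ is decreasing there), so the long edge $C$ never fits on the small side and is never enclosed. Now look around a vertex $v$: the six incident edges read $C,A,B,C,A,B$ in cyclic order; the two $C$'s are excluded, and the two upward edges (one $A$-type, one $B$-type) flank a common $\gamma$-angled triangle, as do the two downward ones, so at most one upward and at most one downward edge at $v$ can be enclosed. Every vertex of the enclosed tree therefore has degree at most $2$, i.e., the tree is a path. In particular, your unresolved prong configuration --- a horizontal enclosed edge meeting vertical ones --- never occurs, because the horizontal edges are exactly the $C$ edges.
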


\begin{proof}
Consider a triangle $T_1$ crossed by the trajectory, for which the edge not intersected by the trajectory is in the region enclosed by the trajectory. By Corollary~\ref{cor:small_side}, the enclosed edge lies on the small side of the chord. Thus in the folded state, the chord of the trajectory crosses the triangle $T_2$ on the other side of this edge, since the triangle is too large to fit entirely on this small side.

Since $T_2$ intersects the chord when folded, it is either crossed by the original trajectory or by a different trajectory sharing the same chord in the folded state. Again by Corollary~\ref{cor:small_side}, the edge shared by these two triangles lies on the small side of the chord, so it must be in the interior of both trajectories. Trajectories sharing a chord cannot intersect, so these two segments must be parts of the same trajectory.

Therefore if a trajectory encloses an edge, the trajectory must cross the triangles on both sides of that edge, so no trajectory encloses a triangle; a trajectory encloses only vertices and edges of the tiling.
Since a trajectory can enclose at most one edge of each triangle, and the trajectory chord is too short to enclose a long edge, the tree enclosed is actually a path.
\end{proof}

\subsection{The tree conjecture implies the 4n+2 conjecture}

Previous work on triangle tilings conjectured (\cite{icerm}, Conjecture 4.12) that every periodic trajectory has a period of the form $4n+2,n\geq 1$. Our Tree Conjecture \ref{conj:only-tree} is stronger, that every periodic trajectory encloses a tree, and $n$ is the number of enclosed vertices. We show in Proposition \ref{4n-plus-2} that the Tree Conjecture implies the $4n+2$ conjecture.

\begin {lemma} \label{alltriangle}
Suppose a periodic trajectory on a triangle tiling encloses a tree. If $v$ is a vertex of that tree, the trajectory must enter all six of the triangles adjacent to $v$.
\end{lemma}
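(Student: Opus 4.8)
The plan is to argue topologically, leaning on the fact (Theorem~\ref{sametrajectory}) that a periodic trajectory is a simple closed curve $C$, together with the hypothesis that the enclosed vertices and edges form an \emph{acyclic} graph. Notably, I expect not to need connectivity of the tree at all, only the absence of cycles.

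First I would record the local picture around $v$. In a triangle tiling every vertex is surrounded by exactly six triangles, since the tiling is an affine image of the equilateral triangle tiling; I label them $T_1,\dots,T_6$ in cyclic order about $v$. A periodic trajectory is a finite closed polygon that never passes through a vertex, so there is a radius $\varepsilon>0$ with $B(v,\varepsilon)\cap C=\varnothing$. Because $v$ is enclosed, this forces the entire ball $B(v,\varepsilon)$ to lie in the open interior of $C$, and that ball meets the interior of each of the six triangles at $v$.

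The main step is then a proof by contradiction. Suppose some $T_i$ is \emph{not} entered by the trajectory, that is, $C$ is disjoint from the interior of $T_i$; by Theorem~\ref{thm:sametriangle} the notion of ``entered'' is unambiguous, as the trajectory meets any triangle in at most one chord. The interior of $T_i$ is connected and disjoint from $C$, hence lies in a single complementary component of $C$; since it contains points of $B(v,\varepsilon)$, that component must be the interior of $C$. Consequently all of $T_i$, including its three bounding edges, lies in the closed interior of $C$, so those three edges are enclosed and together form a $3$-cycle among the enclosed edges. This contradicts the hypothesis that the enclosed graph is a tree. Therefore each of $T_1,\dots,T_6$ is entered, which is exactly the assertion.

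I expect the only delicate point to be this middle step—namely ruling out that an un-entered $T_i$ is merely tangent to $C$ rather than fully enclosed. This is handled cleanly once we note that $C$ stays a positive distance from $v$ and that the trajectory crosses tiling edges transversally (it cannot run along an edge of $T_i$), so ``$C$ disjoint from the interior of $T_i$'' genuinely places the closed triangle $\overline{T_i}$ in one Jordan component. After that, everything reduces to the Jordan curve theorem plus acyclicity of the enclosed graph.
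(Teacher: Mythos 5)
Your proof is correct and takes essentially the same approach as the paper's: both argue by contradiction that a triangle adjacent to $v$ which the trajectory does not enter would have to be completely enclosed by the simple closed curve, contradicting the tree (acyclicity) hypothesis. Your version simply spells out the Jordan-curve and transversality details that the paper leaves implicit.
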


\begin{proof}
Let $t$ be a periodic trajectory on a triangle tiling enclosing a vertex $v$. Suppose for contradiction that there is a triangle adjacent to $v$ that $t$ does not enter. Since $t$ is closed and simple and completely encloses vertex $v$, yet also does not intersect one of the adjacent triangles, it must completely enclose the triangle, so $t$ is not a tree.
\end{proof}

\begin{proposition}\label{4n-plus-2}
If a periodic trajectory on a triangle tiling encloses a tree with $n$ vertices, then it has period $4n+2$.
\end{proposition}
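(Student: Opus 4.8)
The plan is to convert the period into a count of tiling triangles, and then to count those triangles by their incidences with the enclosed tree. Since the trajectory is a simple closed curve that crosses exactly one edge when passing between consecutive triangles, and since by Theorem~\ref{thm:sametriangle} it enters each triangle at most once, the period $P$ (the number of edges crossed in one cycle) equals the number of distinct triangles the trajectory passes through. So it suffices to count the triangles crossed by the trajectory and show this count is $4n+2$.

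First I would identify the crossed triangles combinatorially. Write $T$ for the enclosed tree, which has $n$ vertices and $n-1$ edges. By Lemma~\ref{alltriangle}, all six triangles around each tree vertex are crossed. For the converse, I would argue that every crossed triangle contains at least one vertex of $T$: the trajectory meets such a triangle in a single straight chord (Theorem~\ref{thm:sametriangle}, with vertex-hits disallowed), entering through one edge and leaving through another, and thus cutting off at least one corner onto the enclosed side of the chord. That corner is an enclosed tiling vertex, hence a vertex of $T$. Consequently the crossed triangles are exactly the tiling triangles containing at least one vertex of $T$, and each such triangle contains either one or two of them.

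The heart of the argument is then a double count of incidences between crossed triangles and tree vertices. Each tree vertex lies in exactly six triangles, giving $6n$ incidences. To exploit this I need two structural facts. No triangle can have all three corners in $T$: if it did, all three of its sides would join enclosed vertices and hence lie in the enclosed region, producing a $3$-cycle in $T$ and contradicting acyclicity. (Here I would use that a tiling edge with both endpoints enclosed is itself enclosed, which holds because each tiling edge is crossed at most once---otherwise one of its two adjacent triangles would be revisited, contradicting Theorem~\ref{thm:sametriangle}---so an edge with both endpoints inside the Jordan region is crossed an even, hence zero, number of times.) Next, a triangle has exactly two tree vertices precisely when two of its corners are joined by a tree edge, and each of the $n-1$ tree edges is shared by exactly two triangles whose remaining corner is not enclosed; this gives exactly $t_2 = 2(n-1)$ triangles with two tree vertices. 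Letting $t_1$ count crossed triangles with a single tree vertex, the incidence identity reads $t_1 + 2t_2 = 6n$, while $P = t_1 + t_2$, so
\[
P \;=\; (t_1 + 2t_2) - t_2 \;=\; 6n - 2(n-1) \;=\; 4n+2 .
\]

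The main obstacle I anticipate is not the arithmetic but pinning down the exact correspondence between crossed triangles and triangles incident to $T$, in particular the two structural facts that no crossed triangle has three tree corners and that the two-corner triangles correspond cleanly (two-to-one) with the tree edges. Both rest on the observation that an enclosed pair of adjacent tiling vertices forces the joining edge to be enclosed, which in turn leans on the simplicity of the trajectory (Theorem~\ref{sametrajectory}) together with the at-most-once crossing of each tiling edge. Once that is secured, the count is forced, and as a bonus $n$ acquires its promised meaning as the number of enclosed vertices.
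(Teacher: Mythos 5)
Your proof is correct, and it takes a genuinely different route from the paper's. The paper proves this proposition by ordering the tree vertices $v_1,\dots,v_n$ so that each new vertex attaches to exactly one earlier one, and then attributing trajectory pieces sequentially: six triangles around $v_1$, and four new triangles for each subsequent vertex (since tree-adjacent vertices share exactly two triangles), giving $6+4(n-1)=4n+2$. You instead run a global double count: with $t_i$ the number of crossed triangles containing exactly $i$ tree vertices, you get $t_1+2t_2=6n$ from Lemma~\ref{alltriangle}, $t_2=2(n-1)$ from the tree edges, and $P=t_1+t_2=4n+2$. Beyond being a different decomposition, your argument makes explicit two facts the paper's proof uses only implicitly: first, that the attribution is \emph{complete}, i.e.\ every crossed triangle contains an enclosed vertex (your corner-cutting argument, using that the trajectory meets each triangle in a single chord by Theorem~\ref{thm:sametriangle}); and second, that a tiling edge whose endpoints are both enclosed is itself enclosed (your parity argument, using that each edge is crossed at most once), which is what makes tiling-adjacency between enclosed vertices coincide with tree-adjacency and rules out a triangle with three enclosed corners. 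The paper's induction is shorter and more visual, and its ordering directly exhibits the $6+4(n-1)$ structure; your version is more self-contained, avoids the need for a careful vertex ordering and the claim that each new vertex shares exactly two already-counted triangles, and localizes all the topological input into two cleanly stated lemmas.
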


\begin{proof}
Let $t$ be a periodic path on a triangle tiling enclosing $n$ vertices. Each vertex in the tiling is adjacent to six triangles. Label the vertices of the tree enclosed by $t$ as $v_1,...,v_n$ such that each vertex $v_j$ with $j \neq 1$ is adjacent to some vertex $v_i$ with $i<j$. We will now count the period of the path by attributing certain parts of the path to each labeled vertex (see Figure \ref{4n2figure}).
By Lemma \ref{alltriangle}, $t$ intersects all six triangles adjacent to $v_1$. We know $v_2$ is adjacent to $v_1$ and therefore two of the six triangles adjacent to $v_2$ are also adjacent to $v_1$. By Theorem \ref{thm:sametriangle}, $t$ does not cross itself and thus exactly two pieces of $t$ intersecting triangles adjacent to $v_2$ have already been counted by $v_1$. That is, $v_2$ provides an additional four pieces of $t$. $v_3$ is either adjacent to $v_1$ or $v_2$ ($v_3$ cannot be adjacent to both, because then the trajectory would  enclose a triangle). Thus, $v_3$ shares two adjacent triangles with an already-counted vertex and adds four new pieces of $t$ to our count. Similarly, each of the vertices $v_4, \dots ,v_n$ is adjacent to exactly one of the already-counted vertices, and thus is responsible for four new pieces of $t$.
Thus the period of $t$ is $6+4(n-1)=4n+2$.
\end{proof}

\begin{figure}[!h]
\begin{center}
\includegraphics[width=0.5\textwidth]{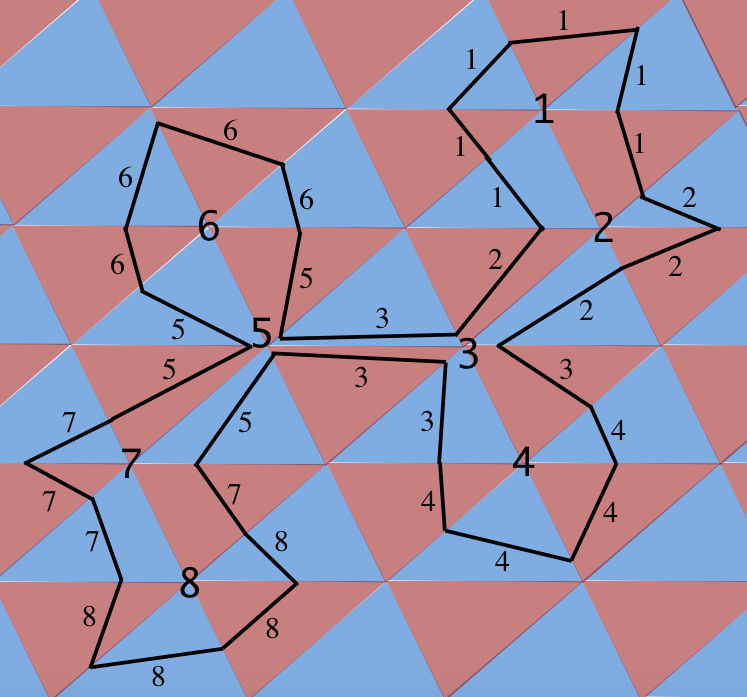}
\caption{The method of attributing pieces of a periodic trajectory (labeled with small text) to nearby vertices (labeled with large text) of the enclosed tree. There are six pieces attributed to the first vertex, and four new pieces for each additional vertex. The tree structure allows for systematic numbering. This trajectory is the fourth Rauzy fractal from Appendix \ref{rauzyfrac}. \label{4n2figure}}
\end{center}
\end{figure}

\begin{corollary}\label{cor:4n2obtuse}
A periodic trajectory on an obtuse triangle tiling enclosing $n$ vertices has period $4n+2$.
\end{corollary}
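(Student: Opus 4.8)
The plan is to obtain this corollary as an immediate consequence of the two preceding results, specialized to the obtuse setting. First I would invoke Theorem \ref{thm:tree}, which guarantees that a periodic trajectory on an obtuse triangle tiling encloses a path. A path is a particular kind of tree, so the subgraph of enclosed vertices and edges is a tree; say it has $n$ vertices. Then I would apply Proposition \ref{4n-plus-2}, which states that any periodic trajectory enclosing a tree with $n$ vertices has period $4n+2$. Composing these two statements gives the period $4n+2$ for a periodic trajectory on an obtuse triangle tiling enclosing $n$ vertices.

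Conceptually it is worth recalling why each ingredient holds, to confirm that the $n$ appearing in the two results refers to the same count. Theorem \ref{thm:tree} rules out any enclosed triangle via Corollary \ref{cor:small_side}: in the folded position the enclosed edges lie on the small side of the chord, which is too short to admit an entire enclosed triangle, so the enclosed subgraph consists only of vertices and edges, and in fact forms a path. Proposition \ref{4n-plus-2} then counts the trajectory's edge crossings by attributing six pieces to a first enclosed vertex and exactly four additional pieces to each further vertex, using Lemma \ref{alltriangle} together with the tree structure to avoid double counting, yielding $6+4(n-1)=4n+2$. In both results $n$ is precisely the number of enclosed vertices, so the identification is consistent and the corollary follows.

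There is essentially no obstacle to overcome here: all the substantive work lies in Theorem \ref{thm:tree} (the obtuse-specific geometric input that forces a tree, indeed a path) and in Proposition \ref{4n-plus-2} (the purely combinatorial vertex count), both already established. The only point to verify is the compatibility of the two statements just noted, which is automatic once one observes that the path produced by Theorem \ref{thm:tree} is a tree to which Proposition \ref{4n-plus-2} directly applies.
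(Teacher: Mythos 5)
Your proposal is correct and follows exactly the paper's intended argument: the corollary is stated without a separate proof precisely because it is the composition of Theorem \ref{thm:tree} (the enclosed region is a path, hence a tree) with Proposition \ref{4n-plus-2} (a tree with $n$ vertices forces period $4n+2$), as the paper announces at the start of \S\ref{sec:tree}. Your check that the $n$ in both statements counts the same enclosed vertices is the only compatibility point, and you handled it correctly.
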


\vfill
\hrule

{\bf Paul Baird-Smith}, Department of Computer Science, University of Texas at Austin, 2317 Speedway, Austin, TX 78712, \textsf{ppb336@cs.utexas.edu}

{\bf Diana Davis}, Department of Mathematics and Statistics, Swarthmore College, 500 College Avenue, Swarthmore, PA 19081, \textsf{ddavis3@swarthmore.edu}

{\bf Elijah Fromm}, Department of Mathematics, Yale University, 10 Hillhouse Avenue, New Haven, CT 06511, \textsf{elijah.fromm@yale.edu}

{\bf Sumun Iyer}, Department of Mathematics, Cornell University, 310 Malott Hall, Ithaca, New York 14853, \textsf{ssi22@cornell.edu}

\newpage

\newpage

\appendix

\section{Periodic trajectories approaching the Rauzy fractal}\label{rauzyfrac}

\vspace{-0.5em}

Trajectories on the triangle tiling associated to the Arnoux-Yoccox IET, of increasing length. Note the similarities down each column. The sets of triangles hit by each trajectory, and indeed the shape of the trajectories themselves, form a \emph{Tribonacci} sequence: pasting together three successive regions produces the next set of triangles in the sequence.

\begin{center}
\includegraphics[width=4cm]{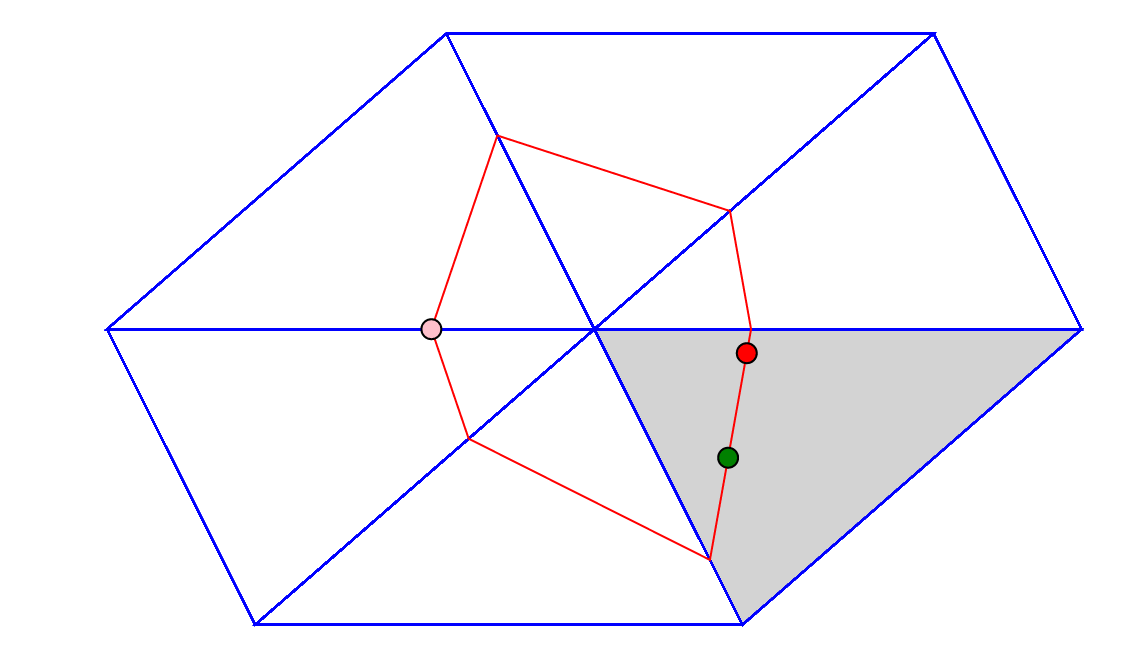}
\includegraphics[width=4cm]{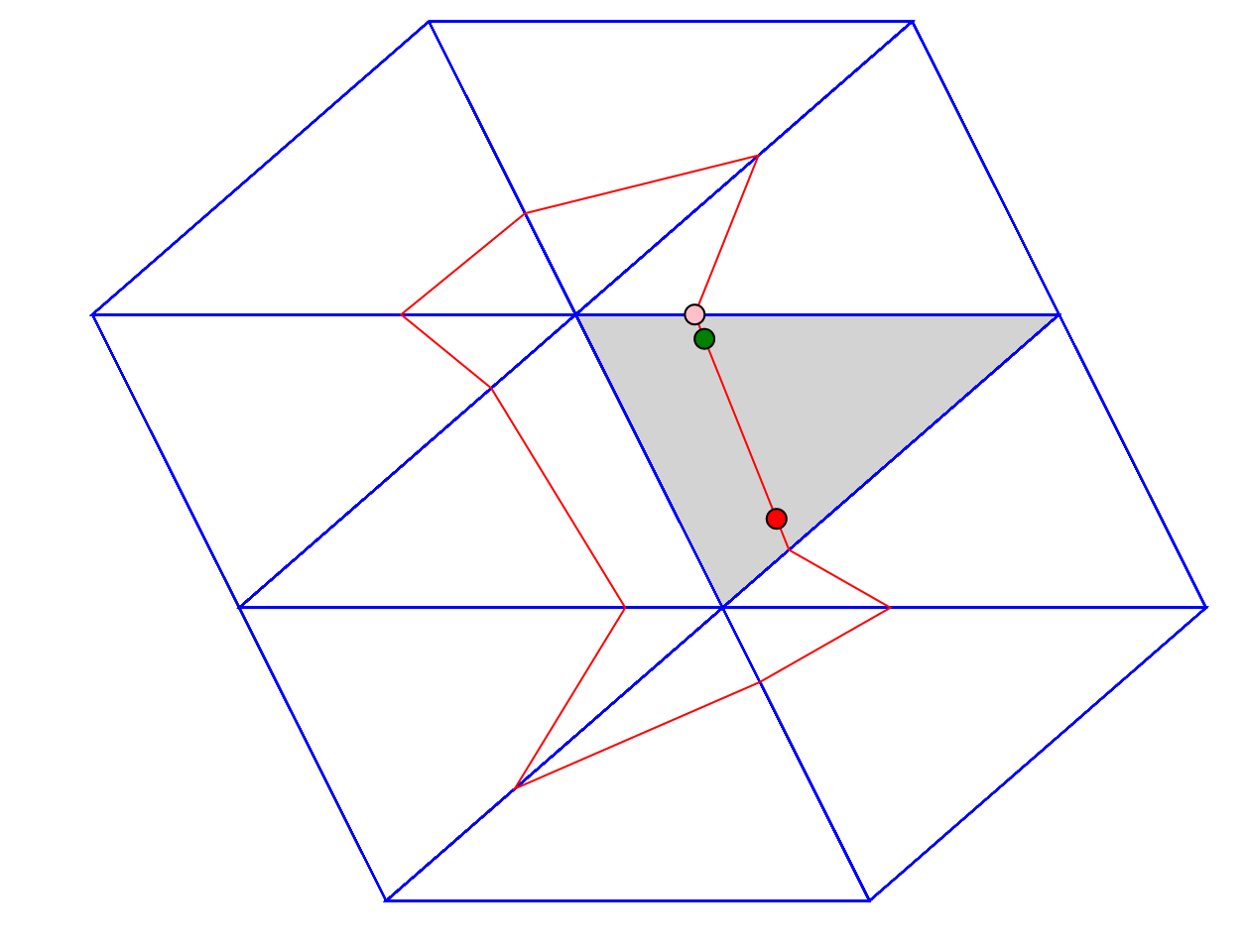}
\includegraphics[width=4cm]{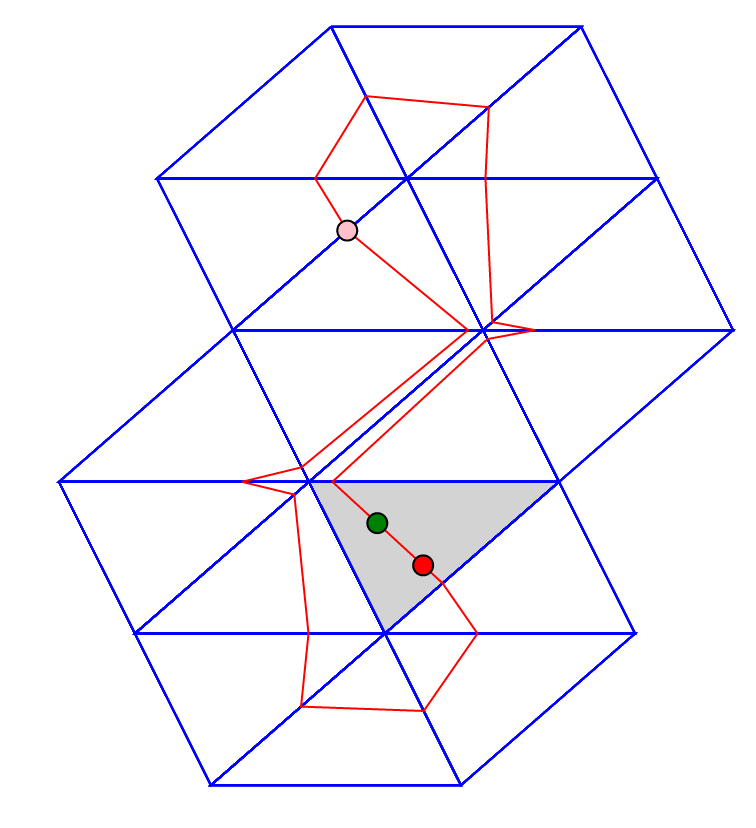}\\
\includegraphics[width=4cm]{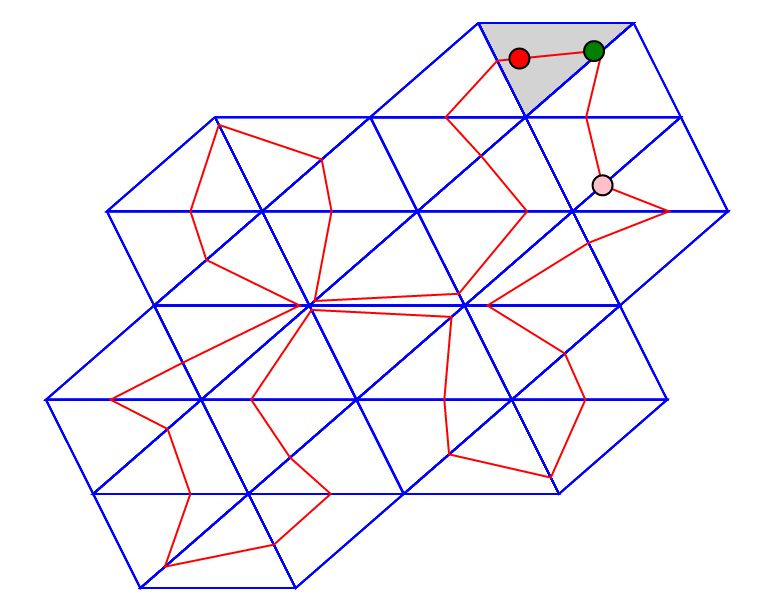}
\includegraphics[width=4cm]{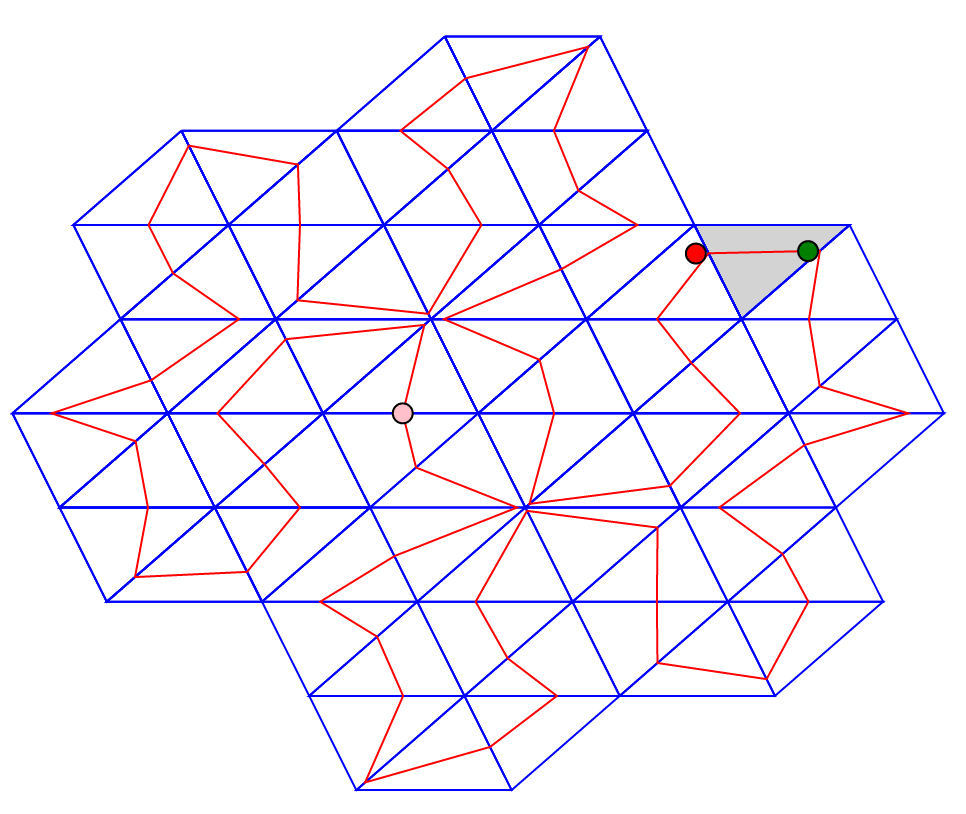}
\includegraphics[width=4cm]{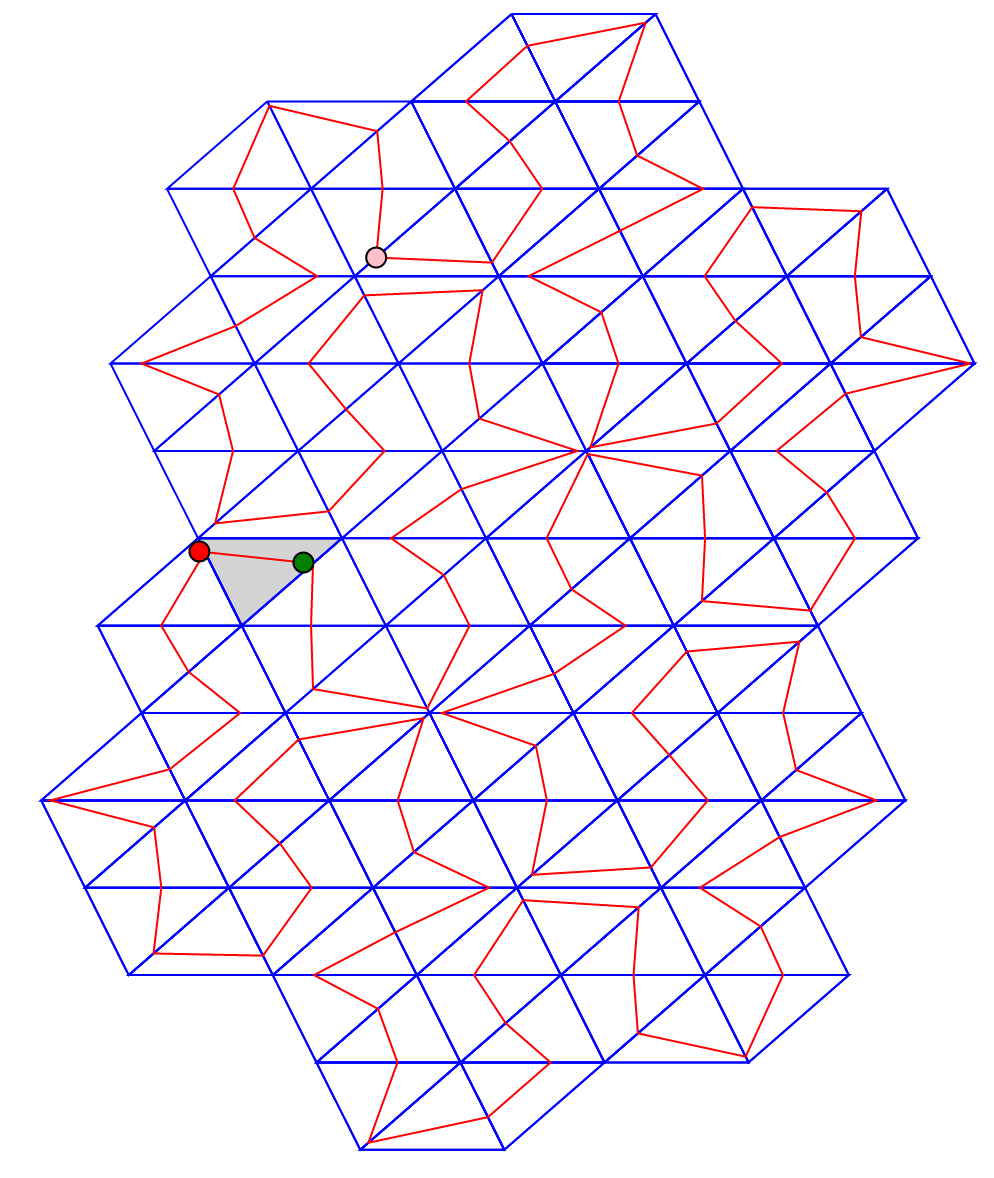}\\
\includegraphics[width=4cm]{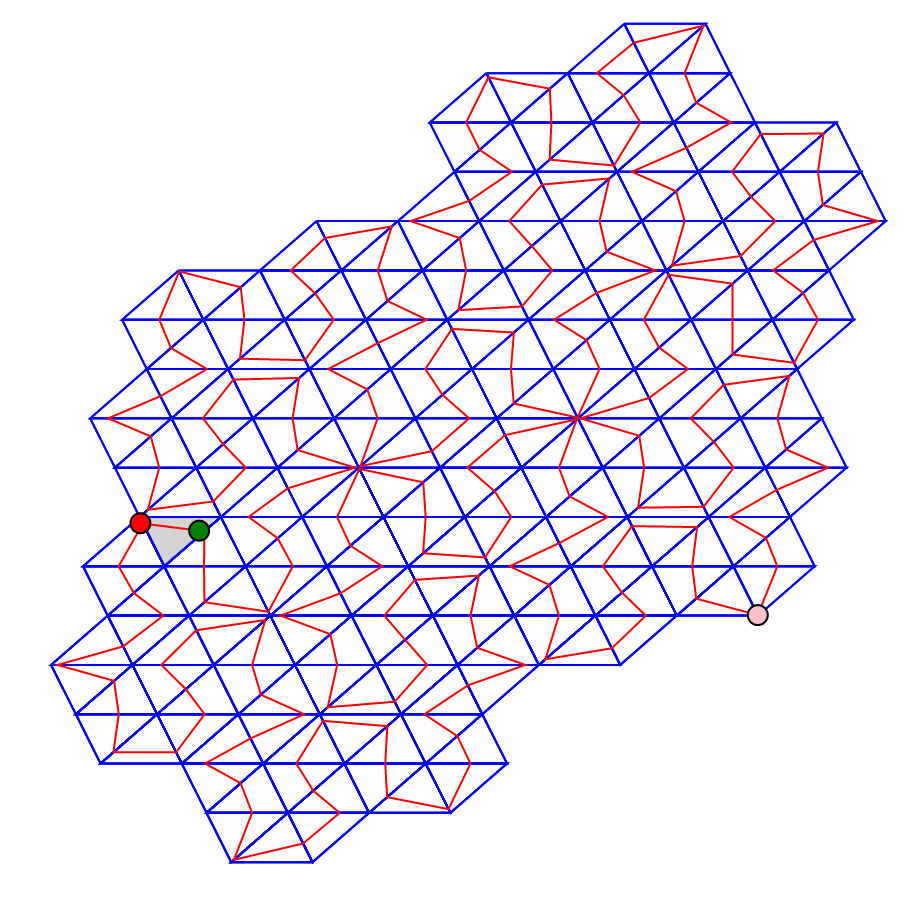}
\includegraphics[width=4cm]{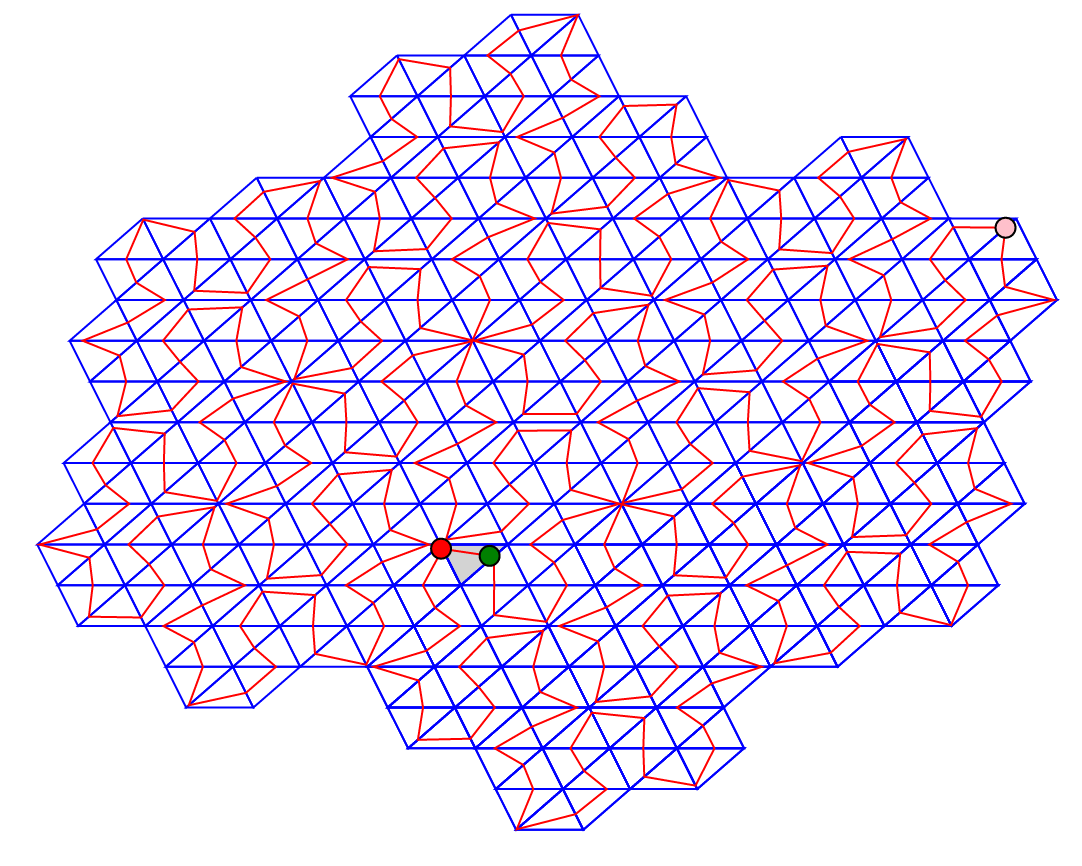}
\includegraphics[width=4cm]{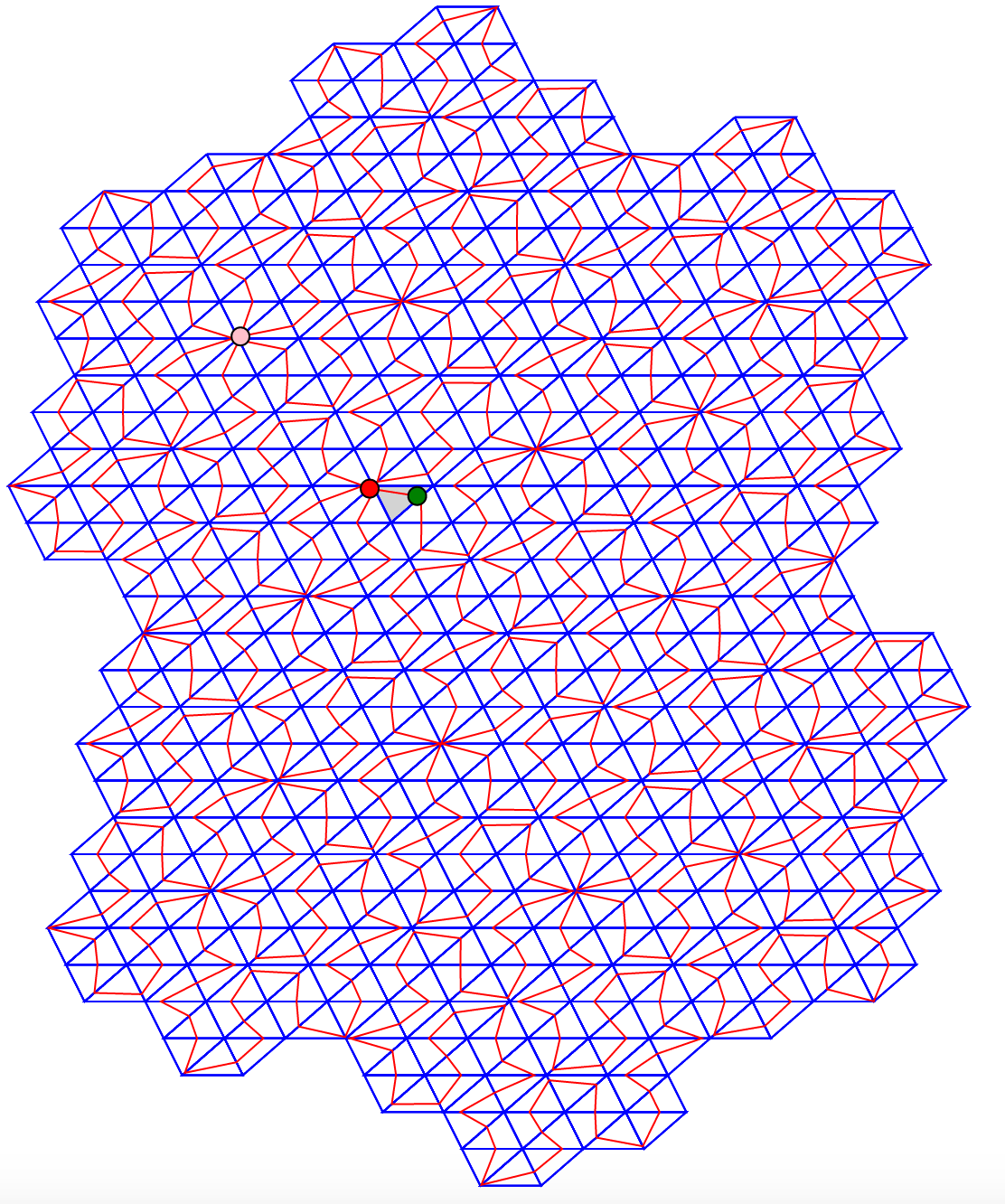}\\
\includegraphics[width=4cm]{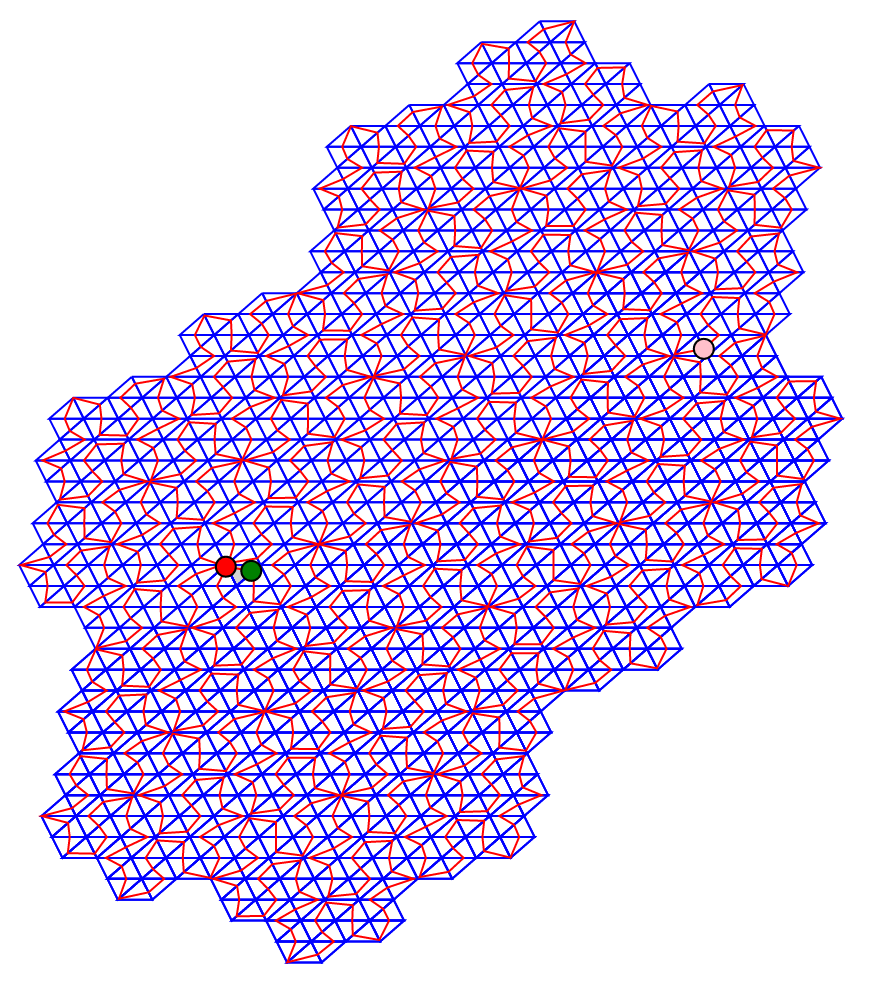}
\includegraphics[width=4cm]{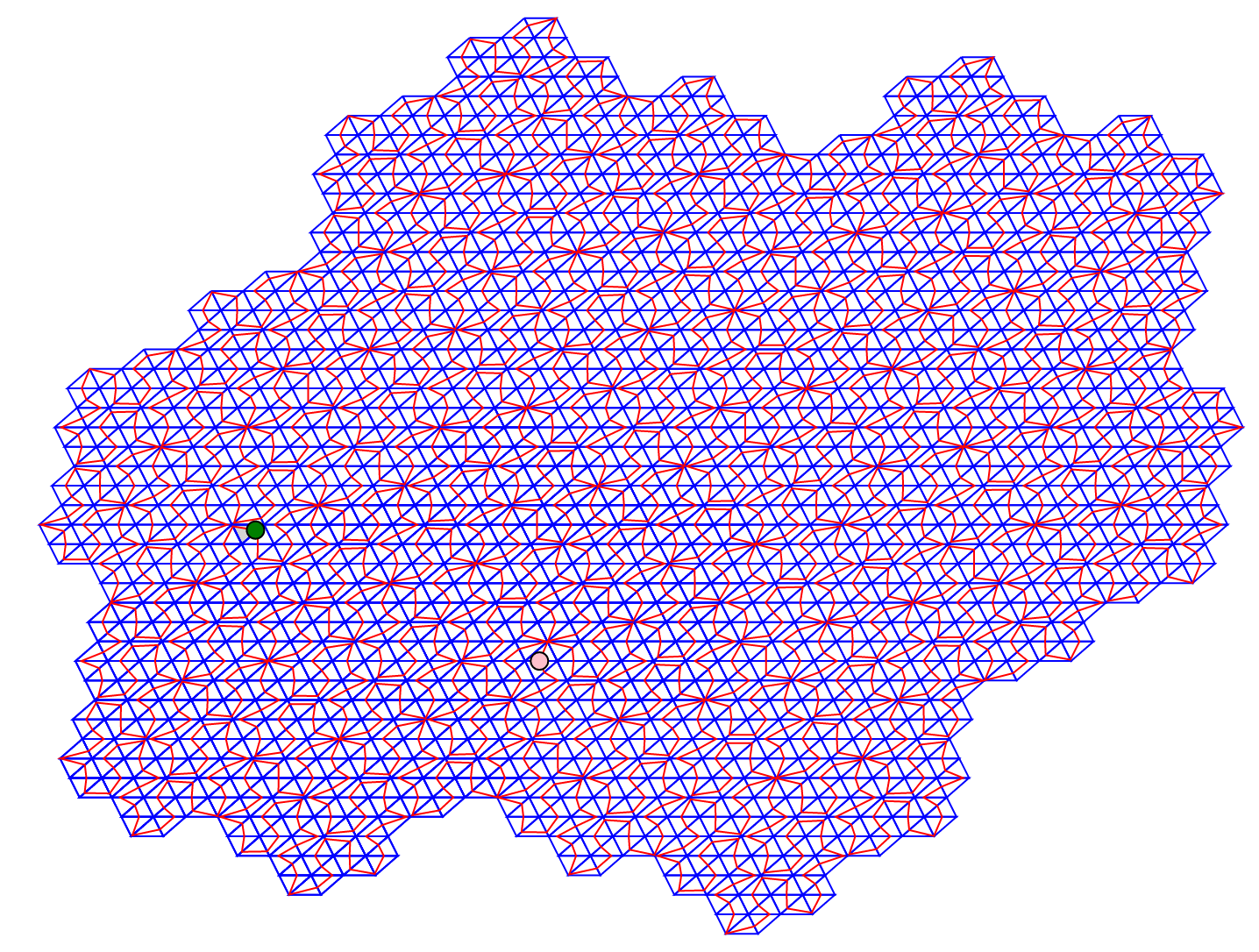}
\includegraphics[width=4cm]{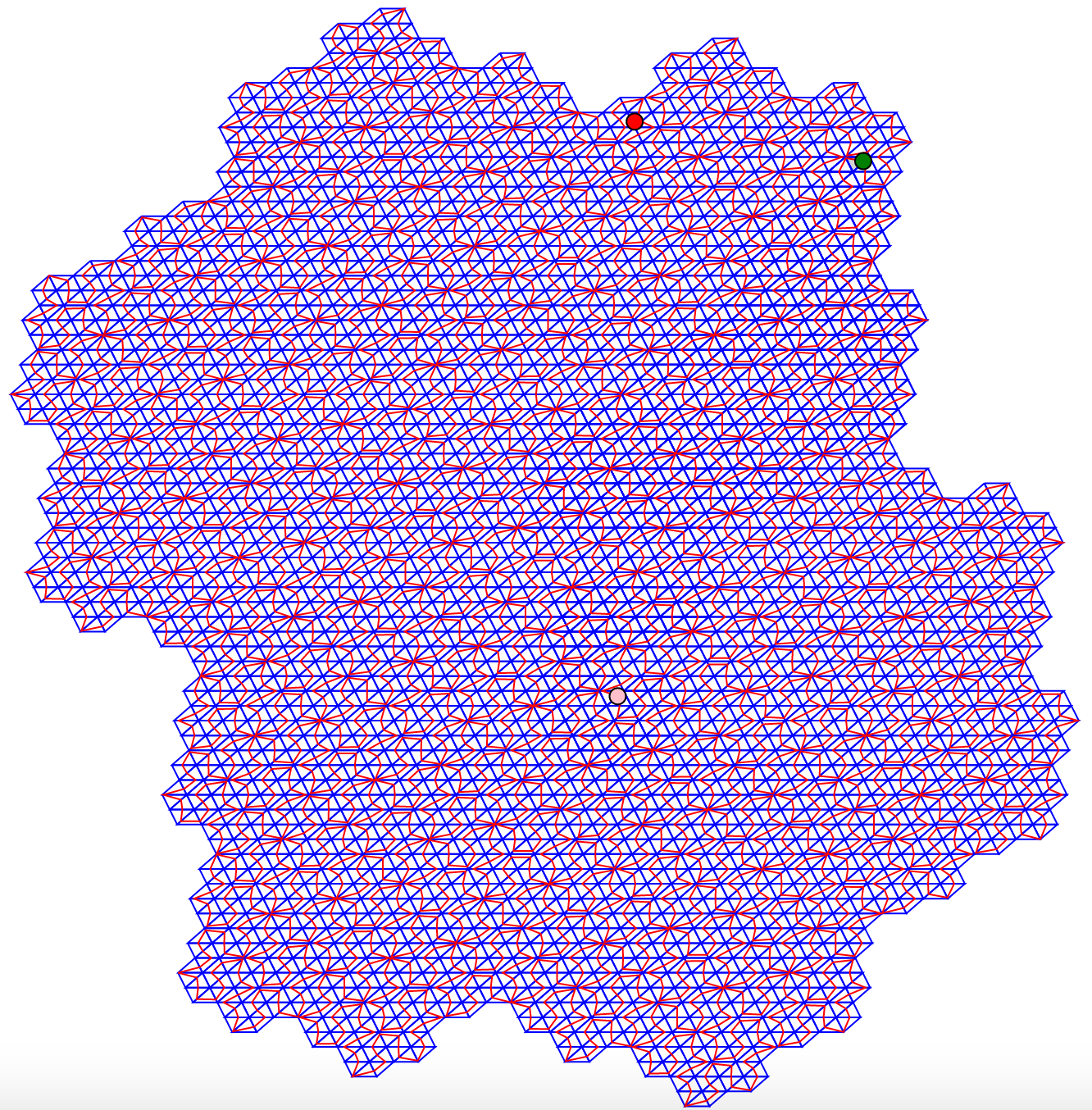}
\end{center}
\clearpage

\section{An example trajectory to cut and fold}\label{app:toy}

We have found it illuminating to cut and fold up trajectories on triangle tilings. Cut out along the outer edge of the picture, and fold along every edge of the tiling. The two short edges should be ``mountain folds'' and the long edges a ``valley fold.'' This will allow you to see for yourself the results of the basic Lemmas in \S\ref{sec:basic}. See Figure \ref{fig:hands} for guidance.

\vfill

\includegraphics[width=1.0\textwidth]{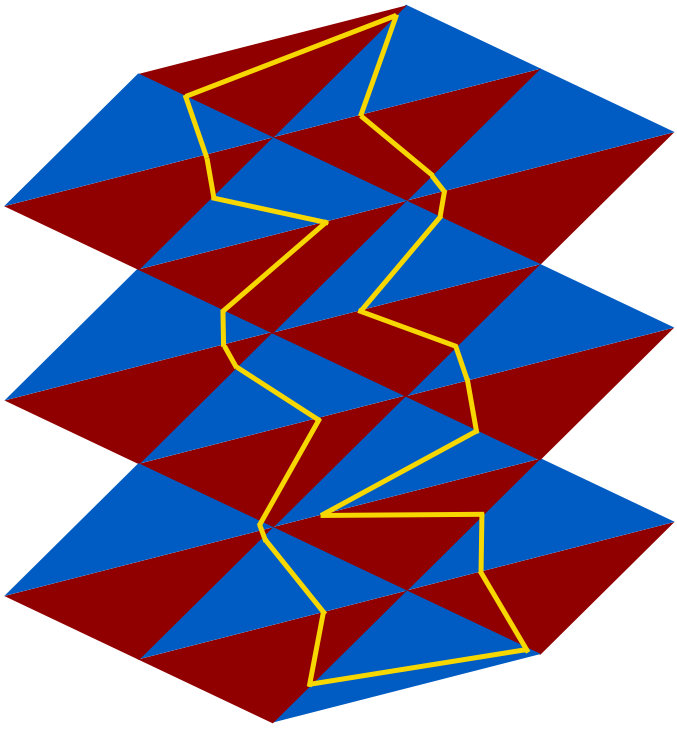}

\end{document}
